\newtheorem{theorem}{Theorem}[section]
\newtheorem{lemma}[theorem]{Lemma}
\newtheorem{corollary}[theorem]{Corollary}
\newtheorem{proposition}[theorem]{Proposition}
\theoremstyle{definition}
\newtheorem{definition}[theorem]{Definition}
\newtheorem{example}[theorem]{Example}
\theoremstyle{remark}
\newtheorem{remark}[theorem]{Remark}
\newtheorem{question}[theorem]{Question}
\newtheorem{conjecture}[theorem]{Conjecture}
\numberwithin{equation}{section}
\let\oldintop\intop
\def\oldint{\oldintop\nolimits}
\newcommand{\GL}{\operatorname{GL}}
\newcommand{\Gr}{\operatorname{Gr}}
\newcommand{\Fl}{\operatorname{Fl}}
\newcommand{\In}{\operatorname{In}}
\newcommand{\Frep}{\operatorname{Frep}}
\newcommand{\module}{\operatorname{mod}}
\newcommand{\op}[1]{\operatorname{#1}}
\newcommand{\Rep}{\operatorname{Rep}}
\newcommand{\Mod}{\operatorname{Mod}}
\newcommand{\Hom}{\operatorname{Hom}}
\newcommand{\Mon}{\operatorname{Mon}}
\newcommand{\Epi}{\operatorname{Epi}}
\newcommand{\Aut}{\operatorname{Aut}}
\newcommand{\Ext}{\operatorname{Ext}}
\newcommand{\ext}{\operatorname{ext}}
\newcommand{\Exp}{\operatorname{Exp}}
\newcommand{\Ker}{\operatorname{Ker}}
\newcommand{\Img}{\operatorname{Im}}
\newcommand{\T}{\operatorname{T}}
\newcommand{\Id}{\operatorname{Id}}
\newcommand{\mc}[1]{\mathcal{#1}}
\newcommand{\mb}[1]{\mathbb{#1}}
\newcommand{\Spec}{\operatorname{Spec}}
\newcommand{\br}[1]{\overline{#1}}
\newcommand{\innerprod}[1]{\langle#1\rangle}
\renewcommand{\ss}[2]{^{{#1}\cdot{\rm #2}}}
\newcommand{\sm}[1]{\left(\begin{smallmatrix}#1\end{smallmatrix}\right)}
\newcommand{\smvar}[1]{\bigl[\begin{smallmatrix}#1\end{smallmatrix}\bigr]}
\def\Xint#1{\mathchoice
{\XXint\displaystyle\textstyle{#1}}%
{\XXint\textstyle\scriptstyle{#1}}%
{\XXint\scriptstyle\scriptscriptstyle{#1}}%
{\XXint\scriptscriptstyle\scriptscriptstyle{#1}}%
\!\oldint}
\def\XXint#1#2#3{{\setbox0=\hbox{$#1{#2#3}{\oldint}$}
\vcenter{\hbox{$#2#3$}}\kern-.5\wd0}}
\def\equalsfill{$\m@th\mathord=\mkern-7mu
\cleaders\hbox{$\!\mathord=\!$}\hfill
\mkern-7mu\mathord=$}
\begin{document}

\title{Counting using Hall Algebras II. Extensions from Quivers}

\author{Jiarui Fei}
\address{Department of Mathematics, University of California, Riverside, CA 92521, USA}
\email{jiarui@ucr.edu}
\thanks{}

\subjclass[2010]{Primary 16G10; Secondary 14D20,14N10}

\date{}
\dedicatory{}
\keywords{Quiver with Relations, Ringel-Hall Algebra, GIT Quotient, One-point Extension, Representation Variety, Moduli Space, Quiver Grassmannian, Quiver Flag, Tensor Product Algebra, Polynomial-count, Positivity}

\begin{abstract} We count the $\mb{F}_q$-rational points of GIT quotients of quiver representations with relations. We focus on two types of algebras -- one is one-point extended from a quiver $Q$, and the other is the Dynkin $A_2$ tensored with $Q$. For both, we obtain explicit formulas.
We study when they are polynomial-count. We follow the similar line as in the first paper but algebraic manipulations in Ringel-Hall algebra will be replaced by corresponding geometric constructions.
\end{abstract}

\maketitle

\section*{Introduction}

We continue our development on algorithms to count the points of various representation varieties of a quiver with relations.
In this note, we will mainly focus on a class of algebras called one-point extensions from a quiver.
Unless otherwise specified our base field $k$ is the finite field $\mb{F}_q$ with $q$ elements.
Let $Q$ be any finite quiver and $E$ a representation of $Q$. The one-point extension of $Q$ by $E$ is the triangular algebra
$kQ[E]:=\sm{kQ\ 0\\\,E\:\ k}$. We also interested in the tensor product algebra
$kA_2(Q):=kA_2\otimes kQ$, where $A_2$ is the Dynkin quiver of type $A_2$.
Such algebras include a large class of {\em truncated Jacobian algebras}. The results established here can be applied to the quantum cluster algebra theory \cite{Fc3}.

In \cite{Fc1}, we applied serval counting characters to the Harder-Narasimhan identity \eqref{eq:HallID} in the Ringel-Hall algebra of a quiver and obtained several interesting formulas. All characters that we considered are originated from Reineke's counting character $\oldint$ from the Hall algebra to certain quantum power series ring.
Unfortunately $\oldint$ fails to be an algebra morphism for non-hereditary algebras, though Harder-Narasimhan identity exists quite generally. However, applying the same map $\oldint$ to the HN-identity can still generate effective counting formulas.
We will follow the similar line as the first paper.
The only change is that we replace algebraic manipulations in the Hall algebras by corresponding geometric constructions.

We first state the main results of this notes. Let $A$ be any basic algebra presented by $A=kQ/I$. Fix a slope function $\mu$, and we denote by $\Rep_\alpha^\mu(A)$ the variety of $\alpha$-dimensional $\mu$-semistable representations of $A$, and by $\Mod_\alpha^\mu(A)$ its GIT quotient.

\begin{lemma}  $|\Rep_\alpha^\mu(A)|=\sum (-1)^{s-1} |\Frep_{\alpha_1\cdots\alpha_s}(A)|,$ where the sum runs over all decomposition $\alpha_1+\cdots+\alpha_s=\alpha$ of $\alpha$ into non-zero dimension vectors such that $\mu(\sum_{l=1}^k\alpha_l)<\mu(\alpha)$ for $k<s$.
\end{lemma}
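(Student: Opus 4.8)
The plan is to set up an inclusion–exclusion over Harder–Narasimhan (HN) filtration data. Recall that every representation $M$ of $A$ of dimension vector $\alpha$ has a unique HN filtration with respect to $\mu$: a chain $0 = M_0 \subset M_1 \subset \cdots \subset M_t = M$ whose quotients $M_i/M_{i-1}$ are $\mu$-semistable with strictly decreasing slopes. In particular, $M$ itself is $\mu$-semistable exactly when its HN filtration is trivial, i.e. has length $t=1$. The object $\Frep_{\alpha_1 \cdots \alpha_s}(A)$ should be the variety of \emph{filtered representations}: a representation $M$ of dimension $\alpha$ together with a full flag of subrepresentations $0 = N_0 \subset N_1 \subset \cdots \subset N_s = M$ with $\dim N_k/N_{k-1} = \alpha_k$ (no semistability imposed on the subquotients), and the indexing convention $\mu(\sum_{l\le k}\alpha_l) < \mu(\alpha)$ for $k<s$ records that the flag is ``slope-decreasing at the top'' in the weak sense compatible with HN data.

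First I would fix $\alpha$ and consider the set of all pairs $(M, \text{flag})$ counted by $\sum_s \sum_{\alpha_1+\cdots+\alpha_s=\alpha} \Frep_{\alpha_1\cdots\alpha_s}(A)$ with the admissibility condition $\mu(\sum_{l=1}^k \alpha_l) < \mu(\alpha)$ for $k<s$. Next, for a fixed $M$, I would stratify the flags on $M$ according to how they interact with the HN filtration of $M$: refine each flag step by intersecting with the HN pieces. The key combinatorial fact is that the generating function $\sum_s (-1)^{s-1}(\text{number of admissible flags on }M)$ telescopes: flags that are \emph{not} the HN filtration of $M$ cancel in pairs, via the standard sign-reversing involution that inserts or deletes a flag step at the first place where the flag fails to coincide with the HN filtration. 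Concretely, if $M$ is $\mu$-semistable then the only admissible flag is the trivial one ($s=1$), contributing $(-1)^{0}=1$; if $M$ is not semistable, the signed count of admissible flags is $0$. Summing over $M$ of dimension $\alpha$ then yields $|\Rep_\alpha^\mu(A)|$ on the left, since $\Frep_{\alpha}(A)$ (the $s=1$ term) is just $\Rep_\alpha(A)$ and the unique admissible trivial flag selects precisely the semistable $M$... but one must be careful: the $s=1$ term counts \emph{all} of $\Rep_\alpha(A)$, so the cancellation on the non-semistable locus must be arranged to cut it down to $\Rep_\alpha^\mu(A)$.

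The cleanest way to make this rigorous is the generating-identity approach: pass to (a completion of) the Hall algebra or simply to the ring of functions on dimension vectors, write the HN recursion $e_\alpha = \sum e^{\mathrm{ss}}_{\alpha_1}\cdots e^{\mathrm{ss}}_{\alpha_s}$ (sum over slope-\emph{decreasing} sequences) and Möbius-invert it, which is exactly the alternating sum with the stated admissibility constraint; then apply the point-counting character, noting that $|\Frep_{\alpha_1\cdots\alpha_s}(A)|$ is by definition the coefficient extracted from the product $e_{\alpha_1}\cdots e_{\alpha_s}$ restricted to the correct slope range. I expect the main obstacle to be bookkeeping the asymmetry in the admissibility condition: the inequality $\mu(\sum_{l=1}^k\alpha_l)<\mu(\alpha)$ for $k<s$ is \emph{not} the same as requiring the partial sums to have strictly decreasing slopes, so one must verify that Möbius inversion of the HN recursion produces exactly this ``all proper partial sums have slope below that of $\alpha$'' condition — this follows because in the HN recursion the slopes are strictly decreasing, hence every proper initial segment has slope strictly greater than the slope of the whole, and dualizing/inverting flips the inequality — and that the resulting signed sum of filtered-representation counts has no overcounting, i.e. the flag in $\Frep_{\alpha_1\cdots\alpha_s}(A)$ is genuinely unconstrained on subquotients so that the Möbius cancellation is literally the combinatorial identity $\sum_{\text{refinements}}(-1)^{s-1}=0$ applied pointwise over $M$.
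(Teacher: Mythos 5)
Your ``cleanest way'' is exactly the paper's argument: it invokes Reineke's Harder--Narasimhan identity in the Hall algebra and its recursive inversion (the paper's Lemma \ref{L:HNid}, cited from \cite{R1}), then applies the counting character $\int:[M]\mapsto a_M^{-1}$, using that $\int\chi_{\alpha_1}\cdots\chi_{\alpha_s}=|\Frep_{\alpha_1\cdots\alpha_s}(A)|/|\GL_\alpha|$ and $\int\chi_\alpha^\mu=|\Rep_\alpha^\mu(A)|/|\GL_\alpha|$, and cancelling $|\GL_\alpha|$. One small correction: $|\Frep_{\alpha_1\cdots\alpha_s}(A)|$ is the count of \emph{all} flags of the given dimension type (no slope restriction on the flag steps themselves); the slope condition constrains only which tuples $(\alpha_1,\dots,\alpha_s)$ enter the sum.
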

We will define the key varieties $\Frep_{\alpha_1\cdots\alpha_s}(A)$ in Section \ref{S:HN}. In particular, if all $\Frep$ varieties can be effectively counted, then so are $\Rep_\alpha^\mu(A)$. The map $\oldint$ have so-called $\Delta$ and $S$ analogs. They are defined in \cite{Fc1} as $\oldint_\Delta$ and $\Xint S$. Here, $\Delta$ and $S$ are related to the comultiplication and the antipode \cite{X} in the Hall algebra.
In the current setting, Lemma 0.1 and counting formula for $\Frep$ varieties have $\Delta$ and $S$ analogs as well.

\begin{lemma}
For $A=kQ[E]$ or $kA_2(Q)$, we have explicit counting formulas for $\Frep$ varieties, and those formulas have $\Delta$ and $S$ analogs.
\end{lemma}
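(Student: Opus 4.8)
The plan is to turn each $\Frep$ variety for $A=kQ[E]$ or $A=kA_n(Q)$ into a stratified affine bundle over a flag variety of the \emph{hereditary} algebra $kQ$ (times an elementary vector-space flag variety in the one-point case), and then to feed the resulting $q$-weighted point-count into the integration map $\oldint$ and its analogs $\oldint_\Delta$, $\Xint S$ exactly as in \cite{Fc1}; for a hereditary algebra $\oldint$ is an algebra morphism, so all flag varieties of $kQ$ can be counted effectively.

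First, for $A=kQ[E]$ I would use the standard description of $A$-modules: a module of dimension vector $(d;\beta)$, with $d$ at the extension vertex and $\beta$ a dimension vector of $Q$, is a triple $(W,V,f)$ with $W\in\Rep_\beta(kQ)$, $V$ a $d$-dimensional space and $f\colon E\otimes_k V\to W$ a morphism of $Q$-representations; a submodule of $(W,V,f)$ is a pair $(W',V')$ with $W'\subseteq W$ a $kQ$-submodule and $V'\subseteq V$ satisfying $f(E\otimes_k V')\subseteq W'$. From this, $\Frep_{\alpha_1\cdots\alpha_s}(kQ[E])$ with $\alpha_l=(d_l;\beta_l)$ is built by first choosing a $kQ$-flag in $W$ — a point of $\Frep_{\beta_1\cdots\beta_s}(kQ)$ — together with a flag of type $(d_1,\dots,d_s)$ in $V$ — a point of an ordinary partial flag variety, whose point-count is a Gaussian multinomial coefficient — and then adjoining the affine space of filtration-compatible maps $f$, which after choosing splittings is $\bigoplus_{m\le l}\Hom_{kQ}(E,W_m/W_{m-1})^{\oplus d_l}$. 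Since $\dim_k\Hom_{kQ}(E,W_m/W_{m-1})=\langle\underline{\dim}E,\beta_m\rangle+\dim_k\Ext^1_{kQ}(E,W_m/W_{m-1})$ with $\langle\ ,\ \rangle$ the Euler form of $Q$, this affine space has dimension constant on each stratum of $\Frep_{\beta_\bullet}(kQ)$ where the numbers $\dim\Ext^1_{kQ}(E,W_m/W_{m-1})$ are fixed, and the only non-constant part of $|\Frep_{\alpha_\bullet}(kQ[E])|$ is a power of $q$ controlled by $\Ext^1_{kQ}(E,-)$ along the subquotients. Stratifying accordingly and summing rewrites $|\Frep_{\alpha_1\cdots\alpha_s}(kQ[E])|$ as a finite combination of $\oldint$-values for $kQ$; running the same computation with sub-flags replaced by the quotient-flags appropriate to the co-Harder--Narasimhan and antipode recursions gives the $\Delta$ and $S$ analogs.

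For $A=kA_n(Q)=kA_n\otimes kQ$, an $A$-module of dimension vector $(\beta^1,\dots,\beta^n)$ is a chain $W_1\to W_2\to\cdots\to W_n$ of $Q$-representations with $W_i\in\Rep_{\beta^i}(kQ)$, and its $A$-submodules are chains of compatible $kQ$-submodules. So $\Frep_{\alpha_1\cdots\alpha_s}(kA_n(Q))$, with $\alpha_l=(\beta^1_l,\dots,\beta^n_l)$, is an affine family over $\prod_{i=1}^n\Frep_{\beta^i_1\cdots\beta^i_s}(kQ)$ whose fibre over a tuple of flags is the space of chains of filtration-compatible $kQ$-maps between the subquotients; again $\dim\Hom_{kQ}=\langle\ ,\ \rangle+\dim\Ext^1_{kQ}$, so stratifying by the $\Ext^1$-ranks between the subquotients reduces $|\Frep_{\alpha_\bullet}(kA_n(Q))|$ to a finite sum of $\oldint$-values for $kQ$, and likewise for the $\Delta$ and $S$ analogs. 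Alternatively one may present $kA_n(Q)$ as a one-point extension $\bigl(kA_{n-1}(Q)\bigr)[P]$ for a suitable module $P$ and induct using the previous step; either route yields the formulas.

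The hard part will be the last step. The ranks $\dim\Ext^1_{kQ}(E,W_m/W_{m-1})$ (respectively, the ranks between the $W_i$) are not constant on the $kQ$-flag varieties, so one must decompose those varieties into the locally closed pieces where the ranks are fixed and then recognize the resulting $q$-weighted sums as specializations of $\oldint$, $\oldint_\Delta$ and $\Xint S$. Concretely this means re-deriving, in the present geometric language, the Hall-algebra identities of \cite{Fc1} — above all the multiplicativity of $\oldint$ for the hereditary algebra $kQ$ — so that the relevant $q$-weighted sums over flags of $kQ$-representations become explicit expressions in $E$ (or in the chain structure) and the Hall structure constants of $kQ$. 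Once that identification is in hand the formulas are explicit and finite, and the polynomial-count problem for the $\Frep$ varieties of $kQ[E]$ and $kA_n(Q)$ reduces to the same problem for the flag varieties of $kQ$.
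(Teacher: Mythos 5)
Your overall shape (fibre over flag data, then reduce to the hereditary algebra $kQ$) is reasonable, but there is a genuine gap at exactly the step you defer to the end, and it cannot be closed in the form you propose. First, a local error: over a fixed $kQ$-flag $W_1\subset\cdots\subset W_s=W$ the space of filtration-compatible maps $f$ is $\bigoplus_l\Hom_k(V_l/V_{l-1},\Hom_{kQ}(E,W_l))$, of dimension $\sum_l d_l\hom_{kQ}(E,W_l)$; this is \emph{not} $\bigoplus_{m\le l}\Hom_{kQ}(E,W_m/W_{m-1})^{\oplus d_l}$, since $\hom_{kQ}(E,W_l)$ differs from $\sum_{m\le l}\hom_{kQ}(E,W_m/W_{m-1})$ by the ranks of the connecting maps into $\Ext^1$. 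More seriously, after stratifying by these $\hom$-values you must evaluate sums of the shape $\sum_W a_W^{-1}q^{n\hom_{kQ}(E,W)}$, and these are not specializations of $\oldint$ applied to elements of $H(Q)$: they are new invariants that require the quiver Grassmannians of $nE$ as input. The paper runs the reduction in the opposite order: it fibers $\Frep_{\tilde\beta,\tilde\gamma}(Q[E])$ over the plain vector-space flag variety $\Fl_{\tilde\beta,\tilde\gamma}$, so the fibre is the honest product $\Rep_{(\alpha_-,\gamma)}(Q[E])\times\Rep_{\tilde\beta}(Q[E])\times\prod_{a}\Hom(k^{\beta(ta)},k^{\gamma(ha)})$ with no jumping (Lemma \ref{L:Frep}), and then evaluates the affine counts via the Hall-algebra identity $\bigl(\sum_{[U]}[U]\bigr)\bigl(\sum_{[V]}|\Epi_Q(nE,V)|[V]\bigr)=\sum_{[W]}|\Hom_Q(nE,W)|[W]$, which after applying $\oldint$ yields $r_{(n,\alpha)}(Q[E])=\sum_{\alpha=\gamma+\beta}\innerprod{\gamma,\beta}^{-1}|\GL_n|^{-1}|\Gr^{\beta}(nE)|\,r_\gamma(Q)$ (Lemma \ref{L:rep}). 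Your homological stratification is then a \emph{consequence} of these formulas (Section \ref{S:HS}), not an available ingredient for deriving them.

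This also shows your closing claim is false: the problem does not reduce to flag varieties of $kQ$ alone. The Grassmannians $\Gr^\beta(nE)$ are unavoidable input data --- the paper exhibits a $(6,3)$-dimensional representation $E_g$ of $K_3$ with $\Gr_{(1,1)}(E_g)$ an elliptic curve, so the $\Frep$ varieties of $kK_3[E_g]$ are not polynomial-count even though every flag variety of $kK_3$ is. For $kA_n(Q)$ your plan is closer to workable, because the analogous weighted sums live in $H(Q)\otimes H(Q)$ and the identity $\bigl([0]\otimes\sum_{[U]}[U]\bigr)\bigl(\sum|\Epi_Q(M,V)|[M]\otimes[V]\bigr)=\sum|\Hom_Q(M,W)|[M]\otimes[W]$ does evaluate them in terms of the $r_\delta(Q)$ alone; the paper instead carries out a direct fibre-bundle computation over $\Fl$ involving incidence varieties (Lemma \ref{L:A2Q}). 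But you would still need to produce that identity (and its $\Delta$- and $S$-analogs, cf.\ Lemma \ref{L:Frep2}), and as written you have not.
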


Recall that a variety
$X$ is called {\em polynomial-count} (or has a counting polynomial) if there exists a (necessarily unique) polynomial $P_X=\sum a_i t_i\in\mb{C}[t]$ such that for every finite extension $\mb{F}_{q^r}/\mb{F}_q$, we have $|X(\mb{F}_{q^r})|=P_X(q^r)$.
We are especially interested in when all $\Frep$ varieties are polynomial-count.
If this is the case, it is clear that each $\Mod_\alpha^\mu(A)$ is polynomial-count when it is a geometric quotient.

\begin{theorem}{\ } \label{T:intro}
\begin{enumerate}
\item For $A=kQ[E]$, all GIT quotients $\Mod_\alpha^\mu(A)$ can be explicitly counted in terms of quiver Grassmannians of $E$. If $E$ is {\em add-polynomial-count}, then all $\Mod_\alpha^\mu(A)$ are polynomial-count.
\item For $A=kA_2(Q)$, all $\Mod_\alpha^\mu(A)$ have counting polynomials, which can be explicitly computed.
\item If $E$ is add-polynomial-count, $\Mod_\alpha^\mu(kA_2\otimes kQ[E])$ is polynomial-count for certain choice of $\alpha$ and $\mu$.
\end{enumerate}
\end{theorem}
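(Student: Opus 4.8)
The plan is to reduce the claim to the two cases already in hand — the one-point extension $kQ[E]$ of part (1) and the $A_n$-tensor algebra $kA_n(Q)$ of part (2) — by realizing $A:=kA_2\otimes kQ[E]$ as built out of them, and then to isolate the dimension vectors $\alpha$ and slopes $\mu$ for which the resulting count stays polynomial. Recall from the Introduction that once every $\Frep$ variety in sight is polynomial-count, so is $\Mod_\alpha^\mu(A)$ as soon as it is a geometric quotient. Applying Lemma 0.1 (or one of its $\Delta$/$S$ variants) writes $|\Rep_\alpha^\mu(A)|$ as a \emph{finite} signed sum of terms $|\Frep_{\alpha_1\cdots\alpha_s}(A)|$, so it suffices to find a class of pairs $(\alpha,\mu)$ for which (a) every $\Frep_{\alpha_1\cdots\alpha_s}(A)$ that occurs is polynomial-count, and (b) $\Mod_\alpha^\mu(A)$ is a geometric quotient; for (b) one takes $\alpha$ suitably coprime relative to $\mu$, so that $\mu$-semistable representations are $\mu$-stable.

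For (a), I would use that a representation of $A$ is exactly a morphism $\Phi\colon M\to N$ of $kQ[E]$-modules — equivalently, that $A$ arises from $kA_2\otimes kQ$ by two one-point extensions adjoining the two copies $(1,*)$ and $(2,*)$ of the extension vertex of $kQ[E]$, linked by the $A_2$-arrow. Forgetting the spaces at $(1,*)$ and $(2,*)$ turns a flagged $A$-representation into a flagged $kA_2\otimes kQ$-representation, so $\Frep_{\alpha_1\cdots\alpha_s}(A)$ maps onto an $\Frep$-type variety of $kA_2\otimes kQ$, which is explicitly polynomial-count by Lemma 0.2 together with part (2). The fibre of this map over a fixed flagged morphism of $Q$-representations parametrizes: the two spaces at $(1,*),(2,*)$ together with their flag; the structure maps recording how these spaces see $E$ inside $M$ and $N$, which are governed by quiver Grassmannians and quiver flags of $E$ and are polynomial-count precisely because $E$ is add-polynomial-count (this is the mechanism of part (1)); and the linking maps, living in $\Hom_{kA_2\otimes kQ}$-spaces between the $E$-related subquotients along the flag. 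Stacking these pieces presents $\Frep_{\alpha_1\cdots\alpha_s}(A)$ as an iterated tower of Grassmannian- and affine-bundle-type layers over the polynomial-count base, giving an explicit expression for $|\Frep_{\alpha_1\cdots\alpha_s}(A)|$.

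The main obstacle is that $kQ[E]$, and hence $A$, need not be hereditary, so the dimensions of the linking $\Hom$-spaces (and of the controlling $\Ext^1$-groups) can jump as the base point varies; then the tower above is not a genuine bundle and $|\Frep_{\alpha_1\cdots\alpha_s}(A)|$ need not be a polynomial for arbitrary $(\alpha,\mu)$. This is exactly what ``certain choice of $\alpha$ and $\mu$'' addresses: one restricts to $\alpha$ and $\mu$ for which these ranks are constant on the semistable strata that appear — for instance by keeping the components of $\alpha$ at $(1,*)$ and $(2,*)$ small, or, more robustly, by arranging the slope inequalities so that semistability forces the linking maps to be injective and the relevant $\Ext^1$'s to vanish. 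Pinning down precisely which $(\alpha,\mu)$ make this constant-rank condition hold — equivalently, when the $\Frep$-fibration over the $kA_2\otimes kQ$-part genuinely trivializes — is the crux; everything else is bookkeeping with the formulas of Lemma 0.2 and part (1). Granting it, polynomiality of each $|\Frep_{\alpha_1\cdots\alpha_s}(A)|$ follows term by term, polynomiality of $|\Rep_\alpha^\mu(A)|$ follows from Lemma 0.1, and polynomiality of $|\Mod_\alpha^\mu(A)|$ follows from (b).
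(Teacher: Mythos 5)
Your treatment of parts (1) and (2) is essentially the paper's: reduce $|\Rep_\alpha^\mu(A)|$ to a signed sum of $|\Frep_{\alpha_1\cdots\alpha_s}(A)|$ via Lemma \ref{L:Tao} and then invoke the explicit fibre-bundle formulas (Lemmas \ref{L:Frep}, \ref{L:rep} for $kQ[E]$, Lemma \ref{L:A2Q} for $kA_n(Q)$), with the geometric-quotient hypothesis removed at the end by the $S$-analog. The problem is part (3), where your argument has a genuine gap. You propose to show that the $\Frep$ varieties of $kA_2\otimes kQ[E]$ are themselves polynomial-count by fibering them over $\Frep$-type varieties of $kA_2\otimes kQ$, you correctly identify the obstruction (the linking $\Hom$-spaces jump in dimension because $kQ[E]$ is not hereditary), and then you dispose of it with ``Granting it, \dots''. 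But the statement you grant --- that for $E$ add-polynomial-count the algebra $kQ[E]\otimes kA_2$ is F-polynomial-count --- is exactly the conjecture left open at the end of Section \ref{S:A2}. The crux of your proof is an unproved conjecture, not bookkeeping.

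The paper proves part (3) without ever analyzing $\Frep$ varieties of $kA_2\otimes kQ[E]$ for general dimension vectors, and this is what ``certain choice of $\alpha$ and $\mu$'' means: not a constant-rank locus to be pinned down, but the specific family $(\gamma,\alpha)$ with the slope $\hat\mu$ of \cite[Lemma 1.2]{Fc1}, for which
$$|\Mod_{(\gamma,\alpha)}^{\hat\mu}(kA_2\otimes kQ[E])|=(q-1)\sum_{M\in\Mod_\alpha^\mu(Q[E])}|\Gr_\gamma(M)|$$
whenever $\Mod_\alpha^\mu(Q[E])$ is a geometric quotient. The right-hand side is then computed by applying $\int_{\Delta(\gamma)}$ to the HN identity \eqref{eq:HallID}: the $\Delta$-analog varieties $\Frep^{\gamma}_{\alpha_1\cdots\alpha_s}(Q[E])$ fiber over incidence varieties with fibres built from affine $\Rep$ varieties of $Q[E]$ (Lemma \ref{L:Frep2}), and those are counted in terms of Grassmannians of $nE$ by Lemma \ref{L:rep}; this is Theorem \ref{T:A2ext}. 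To repair your write-up, replace the tower over $\Frep(kA_2\otimes kQ)$ by this $\Delta$-analog construction; otherwise you are committed to proving the F-polynomial-count conjecture for $kQ[E]\otimes kA_2$, which is a different and substantially harder task.
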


This notes are organized as follows.
In Section \ref{S:Prelim}, we provide necessary background on the representation theory of quivers with relations and points counting.
In Section \ref{S:HN}, we introduce the $\Frep$ variety and the notion of F-polynomial-count. After recalling the Harder-Narasimhan identity in the Hall algebra, we conclude our key lemma (Lemma \ref{L:Tao}).
In Section \ref{S:ext}, we first review the trivial extension of algebras in general, then specialize to the case of one-point extensions from a quiver. We describe the relations of these algebras from the projective presentation of $E$.
In Section \ref{S:Frep}, we show in Lemma \ref{L:Frep} that their $\Frep$ varieties can be counted in terms of the usual representation varieties.
In Section \ref{S:rep}, we show in Lemma \ref{L:rep} that these usual representation varieties can be counted in terms of the Grassmannians of $nE$. Motivated by this, we introduce add-polynomial-count property for a representation. We conclude by our first main result -- Theorem \ref{T:ext} (Theorem \ref{T:intro}.(1)).
Many examples will follow in Section \ref{S:example}.
In Section \ref{S:HS}, we apply our algorithm to count homological strata on the geometric quotients. The method is outlined in Theorem \ref{T:HS}.
In Section \ref{S:A2}, we work with the algebra $kA_2(Q)$. Our second main result -- Theorem \ref{T:A2Q} (Theorem \ref{T:intro}.(2)) gives an analogous counting formula, which is independent of Grassmannians of representations.
In Section \ref{S:delta}, we consider the $\Delta$-analog of counting. We introduce the $\Delta$-analog of the $\Frep$ varieties. Lemma \ref{L:Frep2} is the $\Delta$-analog of Lemma \ref{L:Frep}. We conclude by our third main results -- Theorem \ref{T:A2ext} (Theorem \ref{T:intro}.(3)).
Finally in Section \ref{S:S}, we consider the $S$-analog of counting. Our final main results is Theorem \ref{T:final}, which removes the assumption of being a geometric quotient in the previous results.

Most of our constructions can be easily generalized to the motivic setting. Since the main application of this theory is in the quantum algebra, we shall not pursue that generality. The geometry of these moduli spaces will be studied in another series of notes \cite{Fm2}.

\section{Preliminary} \label{S:Prelim}
\subsection{Quivers with Relations}
Let $Q$ be a finite quiver with the set of vertices $Q_0$ and the set of arrows $Q_1$. If $a\in Q_1$ is an arrow, then $ta$ and $ha$ denote its tail and its head respectively. Fix a {\em dimension vector} $\alpha$, the space of all $\alpha$-dimensional representations over a field $k$ is
$$\Rep_\alpha(Q):=\bigoplus_{a\in Q_1}\Hom(k^{\alpha(ta)},k^{\alpha(ha)}).$$
The group $G=\GL_\alpha:=\prod_{v\in Q_0}\GL_{\alpha(v)}$ acts on $\Rep_\alpha(Q)$ by the natural base change. Two representations $M,N\in\Rep_\alpha(Q)$ are isomorphic if they lie in the same $\GL_\alpha$-orbit.

Let $kQ$ be the {\em path algebra} of $Q$ over $k$, then $M\in\Rep_\alpha(Q)$ is naturally a (right) $kQ$-module.
Fix a set $R$ of homogeneous elements in $kQ$ with respect to the bigrading:
$kQ=\bigoplus_{u,v\in Q_0}e_u kQ e_v$. Here, $e_v$ is the trivial path on the vertex $v$.
Due to the homogeneousity, for each $r\in R$ there are $tr,hr\in Q_0$ such that $r$ is a linear combination of paths from $tr$ to $hr$.
If $M(r)=0$ for all $r\in R$, then we say $M$ is a {\em representation of $Q$ with relations $R$}.
The {\em path algebra of $Q$ with relations $R$} is the quotient algebra $A:=kQ/\innerprod{R}$. A representation of $Q$ with relations $R$ naturally becomes an $A$-module.

The assignment $M\mapsto M(r)$ defines a polynomial map
$ev(r):\Rep_{\alpha}(Q)\to \Hom(k^{\alpha(tr)},k^{\alpha(hr)}),$
which is represented by an $\alpha(hr)\times\alpha(tr)$ matrix with entries in $k[\Rep_{\alpha}(Q)]$.
Let $\tilde{R}\subseteq k[\Rep_{\alpha}(Q)]$ be the ideal generated by the entries of all $ev(r)$ for which  $r\in R$. The representation space $\Rep_{\alpha}(A)$ is the scheme $\Spec(k[\Rep_{\alpha}(Q)]/\tilde{R})$. As a variety, $\Rep_{\alpha}(A)$ consists of all $\alpha$-dimensional representations of $A$.


\subsection{Stability and GIT Quotients}
A {\em weight} $\sigma$ is an integral linear functional on $\mathbb{Z}^{Q_0}$. A {\em slope function} $\mu$ is a quotient of two weights $\sigma/\theta$ with $\theta(\alpha)>0$ for any non-zero dimension vector $\alpha$.
For a representation $M$ of $Q$, we denote by $\br{M}$ the dimension vector of $M$.

\begin{definition} A representation $M$ is called {\em $\mu$-semi-stable (resp. $\mu$-stable)} if
$\mu(\br{L})\leqslant \mu(\br{M})$ (resp. $\mu(\br{L})<\mu(\br{M})$) for every non-trivial subrepresentation $L\subset M$.
\end{definition}

We denote by $\Rep_\alpha^\mu(A)$ the variety of $\alpha$-dimensional $\mu$-semistable representations of $A$.
By the standard GIT construction \cite{Ki}, there is a {\em categorical quotient} $q: \Rep_\alpha^\mu(A)\to \Mod_\alpha^\mu(A)$ and its restriction to the stable representations $\Rep_\alpha\ss{\mu}{st}(A)$ is a {\em geometric quotient}.
In King's paper, the construction is done over an algebraically closed field, but according to \cite{Se} this can also be done over finite fields.

A slope function $\mu$ is called {\em coprime} to $\alpha$ if $\mu(\gamma)\neq \mu(\alpha)$ for any $\gamma<\alpha$. So if $\mu$ is coprime to $\alpha$, then there is no strictly semistable (semistable but not stable) representation of dimension $\alpha$. In this case, $\Mod_\alpha^\mu(A)$ must be a geometric quotient.

Note that the semi-stable objects with a fixed slope $\mu_0$ form an exact subcategory $\module_{\mu_0}(A)$.
For any dimension vector $\alpha$, we can always modify $\mu$ to get a new slope function $\mu_\alpha$ such that $\mu_\alpha(\alpha)=0$ and $\Rep_\alpha^{\mu_\alpha}(A)=\Rep_\alpha^\mu(A)$.
If $\mu(\alpha)=\frac{\sigma(\alpha)}{\theta(\alpha)}$, then we can take $\mu_\alpha=\frac{\sigma_\alpha}{\theta}$, where $\sigma_\alpha=\theta(\alpha)\sigma-\sigma(\alpha)\theta$.

\begin{lemma} {\em \cite[Proposition 2.5]{R1}} {\em Harder-Narasimhan filtration:}\\
Every representation $M$ has a unique filtration
$$0=M_0\subset M_1\subset\cdots\subset M_{m-1}\subset M_{m}=M$$
such that $N_i=M_i/M_{i+1}$ is $\mu$-semi-stable and $\mu(\br{N}_i)>\mu(\br{N}_{i+1})$.
\end{lemma}

\subsection{Counting and Cohomology}
Let $X$ be a variety over $k=\mb{F}_q$ and $X_{\br{k}}=X\otimes_k\br{k}$.
We denote by $H_c^i(X,\mb{Q}_l)$ the $i$-th $l$-adic cohomology group with compact support of $X_{\br{k}}$ with $l\neq \op{char} k$.
The key method for counting rational points on $X$ is given by the Grothendieck-Lefschetz trace formula:
$$|X(\mb{F}_{q^r})|=\sum_{i=0}^{2\dim X}(-1)^i Tr\big(F^r;H_c^i(X,\mb{Q}_l)\big),$$
where $F$ is the Frobenius morphism $X_{\br{k}}\to X_{\br{k}}$.
$X$ is called {\em $l$-pure} if the eigenvalues of $F$ on $H_c^i(X,\mb{Q}_l)$ have absolute value $q^{i/2}$. It is known that if $X$ is smooth and proper over $\br{k}$ then $X$ is $l$-pure.


\begin{lemma} {\em \cite[Proposition 6.1]{R2}} \label{L:Euler}
If $X$ is counted by a rational function $P_X$, then $P_X$ must lie in $\mb{Z}[t]$. Its specialization at $q=1$ gives the $l$-adic Euler characteristic of $X_{\br{k}}$.
\end{lemma}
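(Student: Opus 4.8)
I would prove this using the Grothendieck--Lefschetz trace formula, Deligne's weight bounds, and the linear independence of the functions $r\mapsto\lambda^r$ for distinct $\lambda$. By the trace formula quoted above, $|X(\B{F}_{q^r})|=\sum_i(-1)^i\operatorname{Tr}\bigl(F^r;H_c^i(X,\B{Q}_l)\bigr)$ for every $r\geq1$; listing the eigenvalues of $F$ on all the $H_c^i$, collecting equal ones and discarding those of signed multiplicity zero, this reads $|X(\B{F}_{q^r})|=\sum_k m_k\beta_k^r$ with the $\beta_k$ distinct nonzero algebraic numbers and the $m_k$ nonzero integers, namely the signed multiplicities $\sum_i(-1)^i(\text{multiplicity of }\beta_k\text{ in }H_c^i)$. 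The one nonelementary ingredient is that, for any finite-type scheme $X$, every $H_c^i(X,\B{Q}_l)$ is mixed of weights $\geq0$ (deduced from Deligne's theorem by passing to a dense smooth open $U\subseteq X$ of dimension $d$, using $H_c^i(U,\B{Q}_l)\cong H^{2d-i}(U,\B{Q}_l)^\vee(-d)$ together with the bound that $H^j$ of a smooth $d$-dimensional variety has weights $\leq2d$, and inducting on $\dim X$ via the excision sequence). Hence under any embedding $\br{\B{Q}}_l\hookrightarrow\B{C}$ one has $|\beta_k|\geq1$ for all $k$. If $X=\emptyset$ then $P_X=0$ and there is nothing to prove, so assume $X$ is nonempty.

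Next I would examine the identity $\sum_k m_k\beta_k^r=P_X(q^r)$, $r\geq1$. Over $\B{C}$ write $P_X=h+p/g$ with $h=\sum_n c_nt^n$ a polynomial and $\deg p<\deg g$. Since $q^r\to\infty$ we get $(p/g)(q^r)\to0$, so the exponential sum $\sum_k m_k\beta_k^r-\sum_n c_n(q^n)^r$ tends to $0$, and every base occurring in it has absolute value $\geq1$. I would then use the elementary lemma: if $\sum_j d_j\gamma_j^r\to0$ as $r\to\infty$ with the $\gamma_j$ distinct, nonzero and $|\gamma_j|\geq1$, then all $d_j=0$ --- dividing by $\rho^r$ for $\rho=\max_j|\gamma_j|\geq1$ kills the terms with $|\gamma_j|<\rho$, and applying the relation $\frac1N\sum_{r=1}^N\omega^r\to0$ $(|\omega|=1,\ \omega\neq1)$ to the Cesàro average of $\bigl|\sum_{|\gamma_j|=\rho}d_j(\gamma_j/\rho)^r\bigr|^2$ then forces the remaining coefficients to vanish. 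Collecting our sum by base value and applying this lemma, the coefficient attached to each value is $0$; consequently, for every $n$ with $c_n\neq0$ there is a unique $k$ with $\beta_k=q^n$ and $c_n=m_k\in\B{Z}$, every $\beta_k$ that is a power of $q$ arises in this way, and the exponential sum in fact vanishes identically in $r$.

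It follows that $h\in\B{Z}[t]$ and that $|X(\B{F}_{q^r})|=h(q^r)$ for all $r\geq1$; since $P_X$ and $h$ agree at the infinitely many points $q^r$, we get $P_X=h\in\B{Z}[t]$, which is the first assertion. For the second, $P_X(1)=h(1)=\sum_n c_n=\sum_{k:\ \beta_k\text{ a power of }q}m_k=\sum_k m_k$ --- the last equality because any $\beta_k$ which is not a power of $q$ would contribute its nonzero coefficient $m_k$ to the identically vanishing exponential sum, which is impossible --- and $\sum_k m_k=\sum_i(-1)^i\dim H_c^i(X_{\br k},\B{Q}_l)$ is by definition the $l$-adic Euler characteristic of $X_{\br k}$. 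The genuinely delicate point is the weight bound $H_c^i\geq0$: without it one cannot exclude a true pole of $P_X$ on elementary grounds --- a rational function such as $t/2$ is already integral at $q=2,4,8,\dots$ --- so it is Deligne's purity that does the real work.
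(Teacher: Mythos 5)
The paper gives no proof of this lemma at all --- it is imported verbatim from \cite[Proposition 6.1]{R2} --- and your argument is the standard one underlying that reference: the trace formula writes $|X(\B{F}_{q^r})|$ as a finite signed exponential sum in Frobenius eigenvalues of absolute value $\geq 1$, and linear independence of the functions $r\mapsto\lambda^r$ then forces $P_X$ to equal the integral polynomial recording the signed multiplicities of the eigenvalues that are powers of $q$, with the Euler characteristic dropping out at $q=1$. Your proof is correct, and you rightly isolate the weight lower bound on $H_c^i$ as the one non-elementary input; my only quibble is that the Poincar\'e-duality reduction you sketch for it relies on the bound that $H^j$ of a smooth $d$-dimensional variety has weights $\leq 2d$, which in characteristic $p$ (no smooth compactifications available) is itself a nontrivial consequence of Weil II, so this step is better cited as the standard fact that the weights of $H_c^i$ of any variety lie in $[\max(0,2(i-d)),\,i]$ than rederived in one line.
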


\begin{definition} The {\em Poincar\'{e} polynomial} $P(X,q)\in\mb{Z}[q^{1/2}]$ of $X$ is
$$P(X,q)=\sum_{i\geq 0} (-1)^i\dim H_c^i(X,\mb{Q}_l)q^{i/2}.$$
\end{definition}

\begin{lemma} \cite[Lemma A.1]{CV} \label{L:polycount} Assume that $X$ is $l$-pure and polynomial-count. Then $P_X(q)=P(X,q)$.
In particular, $P_X(t)\in\mb{N}[t]$.
\end{lemma}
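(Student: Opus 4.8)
The plan is to combine the Grothendieck--Lefschetz trace formula with the defining property of $l$-purity and the uniqueness of the counting polynomial. First I would write down, for each finite extension $\B{F}_{q^r}/\B{F}_q$, the trace formula
$$|X(\B{F}_{q^r})|=\sum_{i=0}^{2\dim X}(-1)^i\operatorname{Tr}\big(F^r;H_c^i(X,\B{Q}_l)\big).$$
Since $X$ is assumed $l$-pure, the eigenvalues of $F$ on $H_c^i(X,\B{Q}_l)$ all have absolute value $q^{i/2}$; in particular, for $i$ odd the contribution of $H_c^i$ to $|X(\B{F}_{q^r})|$ is a sum of terms each of archimedean absolute value $q^{ri/2}$, which is an irrational number $q^{r/2}\cdot(\text{integer-power of }q)$ when $i$ is odd. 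The next step is to exploit the polynomial-count hypothesis: we know $|X(\B{F}_{q^r})|=P_X(q^r)$ for all $r$, so the left-hand side grows like a polynomial in $q^r$ with rational-integer behaviour, and by Lemma \ref{L:Euler} in fact $P_X\in\B{Z}[t]$.

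The heart of the argument is to show the odd cohomology contributes nothing and that the $F$-action on $H_c^{2j}$ is by the single scalar $q^j$. I would argue as follows. Group the eigenvalues of $F$ by the cohomological degree they live in; by $l$-purity the eigenvalues from $H_c^i$ have absolute value $q^{i/2}$, so eigenvalues from different degrees have different absolute values and hence cannot cancel against one another. Comparing with $P_X(q^r)=\sum_k a_k q^{rk}$, whose terms have absolute values $q^{rk}$ with $k$ a nonnegative integer, forces every Frobenius eigenvalue to have absolute value an integral power of $q$; combined with $l$-purity this kills $H_c^i$ for $i$ odd, and shows that each eigenvalue on $H_c^{2j}$ has absolute value exactly $q^j$. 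To upgrade "absolute value $q^j$" to "equals $q^j$", I would use that the eigenvalues are algebraic numbers all of whose Galois conjugates (under $\operatorname{Gal}(\br{\B{Q}}/\B{Q})$, acting through the Weil conjecture framework) also have absolute value $q^j$ for every archimedean place — a Weil number of weight $2j$ — together with the fact that $\sum(\text{eigenvalues on }H_c^{2j})$ must, after summing the whole trace formula, reproduce the \emph{rational integer} coefficient pattern of $P_X(q^r)$ for all $r$; a standard linear-independence argument (the functions $r\mapsto q^{rk}$ for distinct $k$ are linearly independent over $\br{\B{Q}}$) then isolates, degree by degree, that $\operatorname{Tr}(F^r;H_c^{2j})=a_j q^{rj}$ for all $r$, whence by Newton's identities or directly that all eigenvalues on $H_c^{2j}$ equal $q^j$ and $a_j=\dim H_c^{2j}(X,\B{Q}_l)\in\B{N}$.

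With that in hand the conclusion is immediate: $P_X(q^r)=\sum_j (\dim H_c^{2j})\, q^{rj}$ for all $r$, so by uniqueness of the counting polynomial $P_X(t)=\sum_j (\dim H_c^{2j})\,t^j$, which visibly has nonnegative integer coefficients, i.e. $P_X(t)\in\B{N}[t]$; and since all odd cohomology vanishes and $F$ acts on $H_c^{2j}$ by $q^j$, the Poincar\'e polynomial is $P(X,q)=\sum_j (\dim H_c^{2j})\, q^{j}=P_X(q)$. I expect the main obstacle to be making the "absolute value an integral power of $q$ forces the eigenvalue to \emph{be} that power" step fully rigorous: one has to be a little careful that $X$ being defined over $\B{F}_q$ (not just $\br{k}$) means the eigenvalues are genuine Weil numbers, and invoke the appropriate part of Deligne's Weil II (or, in this tame quiver-moduli setting, cite the more elementary purity statement already recorded as the preceding lemmas) rather than re-deriving it; everything else is bookkeeping with the trace formula and linear independence of exponentials.
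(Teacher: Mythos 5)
The paper gives no proof of this lemma at all --- it is quoted verbatim from \cite[Lemma A.1]{CV} --- and your argument is essentially the standard one from that reference: the trace formula, purity to confine each Frobenius eigenvalue to a single cohomological degree (hence a single sign $(-1)^i$, so no cancellation across degrees), and linear independence of the functions $r\mapsto\alpha^r$ for distinct $\alpha$ to match the surviving eigenvalues against the monomials of $P_X$. Two small points. First, your claim that $q^{ri/2}$ is irrational for odd $i$ fails when $q$ is a perfect square, but the argument never actually needs irrationality: all that matters is $q^{i/2}\neq q^{k}$ for $i$ odd and $k\in\B{Z}$, which holds because $q>1$. Second, the detour through Weil numbers and Galois conjugates is unnecessary (and on its own would not suffice, since e.g.\ $-q^{j}$ is also a Weil number of weight $2j$): linear independence of $r\mapsto\alpha^r$ over $\B{C}$ already forces every eigenvalue of nonzero signed multiplicity to \emph{equal}, not merely have the absolute value of, an integral power of $q$, after which purity places it in the correct even degree and identifies $a_j=\dim H_c^{2j}(X,\B{Q}_l)$.
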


\section{HN-Filtration Identity} \label{S:HN}

Let $A$ be any basic algebra presented by $A=kQ/I$ for $k$ the finite field $\mb{F}_q$.
For any decomposition of dimension vector $\alpha=\sum_{i=1}^s\alpha_i$, we define $\Fl_{\alpha_s\cdots\alpha_1}:=\prod_{v\in Q_0}\Fl_{\alpha_s(v)\cdots\alpha_1(v)}$, where $\Fl_{\alpha_s(v)\cdots\alpha_1(v)}$ is the usual flag variety parameterizing flags of subspaces of dimension $\alpha_1(v)<\alpha_1(v)+\alpha_2(v)<\cdots<\alpha_1(v)+\cdots+\alpha_{s-1}(v)$ in $k^{\alpha(v)}$. To simplify the notation, we denote $\dot{\alpha}_i:=\sum_{j=1}^i\alpha_j$.

\begin{definition} \label{D:Frep} We define the {\em Frep} variety $\Frep_{\alpha_s\cdots\alpha_1}(A):=$
$$\{(M,L_1,\dots,L_{s-1})\in \Rep_{\alpha}(A) \times\Fl_{\alpha_s\cdots\alpha_1}\mid L_1\subset\cdots\subset L_s=M \text{ are representations} \}.$$
\end{definition}
\noindent Let $r:\Frep_{\alpha_s\cdots\alpha_1}(A)\to\Rep_\alpha(A)$ be the projection, the {\em flag variety} $\Fl_{\alpha_s\cdots\alpha_1}(M)$ {of $M$} is the fibre $r^{-1}(M)$, and its subvarieties
$\Fl_{N_s,\dots,N_1}(M)$ is $$\{(L_1,\dots,L_{s-1})\in\Fl_{\alpha_s\cdots\alpha_1}(M)\mid L_{i}/L_{i-1}\cong N_i\}.$$
When the flag is only 2-step, we may use the usual Grassmannian notation. For example, $\Gr_\gamma(\alpha):=\Fl_{\beta,\gamma}$ and $\Gr_\gamma(M):=\Fl_{\beta,\gamma}(M)$, where $\beta=\alpha-\gamma$.

For any three $A$-modules $U,V$ and $W$ with dimension vector $\beta,\gamma$ and $\alpha=\beta+\gamma$, the {\em Hall number} $F_{UV}^W$ is by definition $|\Fl_{U,V}(W)|$.
We denote $a_W:=|\Aut_Q(W)|.$
Let $H(A)$ be the space of all formal (infinite) linear combinations of isomorphism classes $[M]$ in $A$-$\module$.
\begin{lemma}\cite{R}
The completed {\em Ringel-Hall algebra} $H(A)$ is the associative algebra with multiplication
$$[U][V]:=\sum_{[W]}F_{UV}^W[W],$$ and unit $\eta:k\mapsto k[0]$.
\end{lemma}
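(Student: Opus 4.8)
The plan is to check the two algebra axioms directly --- associativity of the multiplication and the unit property of $[0]$ --- after first verifying that the completed product is well defined. For finite-dimensional $A$-modules $U,V$ of dimension vectors $\br{U},\br{V}$, any $W$ with $F_{UV}^W\neq 0$ satisfies $\br{W}=\br{U}+\br{V}$, and over the finite field $k=\B{F}_q$ there are only finitely many isomorphism classes of modules of a given dimension vector; moreover each $F_{UV}^W=|\Fl_{U,V}(W)|$ is finite because the finite module $W$ has only finitely many subrepresentations. Hence $[U][V]$ is a \emph{finite} sum, and the formula extends bilinearly to arbitrary formal linear combinations: in a product $\bigl(\sum_U c_U[U]\bigr)\bigl(\sum_V d_V[V]\bigr)$ the coefficient of a fixed class $[X]$ is $\sum_{U,V} c_U d_V F_{UV}^X$, and $F_{UV}^X\neq 0$ forces $\br{U},\br{V}\leqslant\br{X}$, so only finitely many pairs contribute. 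Thus the product lands in $H(A)$.

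Expanding the two iterated products, the identity $([U][V])[W]=[U]([V][W])$ for all $U,V,W$ is equivalent to the numerical identity
$$\sum_{[Y]} F_{UV}^Y F_{YW}^X=\sum_{[Z]} F_{VW}^Z F_{UZ}^X$$
for every fourth module $X$. I would prove that both sides count $|\Fl_{U,V,W}(X)|$, the number of filtrations $0\subseteq X_1\subseteq X_2\subseteq X$ with $X_1\cong W$, $X_2/X_1\cong V$, $X/X_2\cong U$. For the left side: a choice of $X_1\subseteq X$ with $X_1\cong W$ contributes to $F_{YW}^X$ with $Y\cong X/X_1$, and the lattice isomorphism between subrepresentations of $X/X_1$ and subrepresentations of $X$ containing $X_1$ turns the subrepresentations of $X/X_1$ counted by $F_{UV}^Y$ into exactly the modules $X_2$ that complete the flag; summing over $[Y]$ then drops the constraint that $Y$ be a fixed class. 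The right side is treated symmetrically: pick $X_2\subseteq X$ with $X/X_2\cong U$ first, and use the correspondence between subrepresentations of $X_2$ and subrepresentations of $X$ contained in $X_2$. Both counts equal $|\Fl_{U,V,W}(X)|$, which gives the identity. This is Ringel's classical argument; the one thing to get right is matching the sub/quotient conventions so that the flags of Definition \ref{D:Frep} line up with the ordering in $[U][V]$.

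For the unit, note that $F_{0V}^W=|\Fl_{0,V}(W)|$ counts subrepresentations $L\subseteq W$ with $L\cong V$ and $W/L\cong 0$, i.e. $L=W\cong V$, so $F_{0V}^W=\delta_{[V],[W]}$ and $[0][V]=[V]$; symmetrically $F_{U0}^W=\delta_{[U],[W]}$, giving $[V][0]=[V]$. Hence $[0]$ is a two-sided identity, and $\eta\colon\B{C}\to H(A)$, $1\mapsto[0]$, is a unit for the algebra.

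The main obstacle is essentially bookkeeping rather than substance: the entire content lies in the associativity identity, and the delicate point is to make the submodule correspondence genuinely canonical and to keep the sub/quotient conventions consistent throughout. The ``completed'' nature of $H(A)$ costs nothing once the local-finiteness observation of the first paragraph is in place, so the proof reduces to the two orbit-counting computations above.
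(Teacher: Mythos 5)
Your proof is correct and is exactly the standard argument (Ringel's filtration count for associativity, $F_{0V}^W=\delta_{[V],[W]}$ for the unit, and local finiteness in each dimension vector for the completion), which is what the paper delegates to the cited reference \cite[Proposition 1.1]{S} rather than proving itself. Your reading of the convention $\Fl_{N_s,\dots,N_1}$ from Definition \ref{D:Frep} --- so that $F_{UV}^W$ counts subrepresentations $L\subseteq W$ with $L\cong V$ and $W/L\cong U$ --- is the right one, and the two iterated sums both counting $|\Fl_{U,V,W}(X)|$ settles associativity as you describe.
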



We fix a slope function $\mu$. For a dimension vector $\alpha$, let $\chi_\alpha=\sum_{\br{M}=\alpha}[M]$ and $\chi_\alpha^\mu=\sum_{M\in\module_\alpha^\mu(A)}[M]$.
We consider a simple counting map $\int: H(A)\to \mb{Q}(q)$ defined by $[M]\mapsto a_M^{-1}$.
Since $\frac{1}{a_M}=\frac{|\mc{O}_M|}{|\GL_\alpha|}$, we have that $\int \chi_\alpha=\frac{|\Rep_{\alpha}(A)|}{|\GL_{\alpha}|}$. We denote the function $\frac{|\Rep_{\alpha}(A)|}{|\GL_{\alpha}|}$ by $r_\alpha(A,q)$. In general, this function may not be rational in $q$.

The existence of the Harder-Narasimhan filtration yields the following identity in the Hall algebra $H(A)$.
\begin{lemma} \cite[Proposition 4.8]{R1} \label{L:HNid} $$\chi_\alpha=\sum_{} \chi_{\alpha_1}^\mu\cdot\dots\cdot\chi_{\alpha_s}^\mu,$$ where the sum running over all decomposition $\alpha_1+\cdots+\alpha_s=\alpha$ of $\alpha$ into non-zero dimension vectors such that $\mu(\alpha_1)<\cdots<\mu(\alpha_s)$. In particular, solving recursively for $\chi_\alpha^\mu$, we get
\begin{equation}\label{eq:HallID} \chi_\alpha^\mu=\sum_* (-1)^{s-1}\chi_{\alpha_1}\cdot\cdots\chi_{\alpha_s},
\end{equation}
where the sum runs over all decomposition $\alpha_1+\cdots+\alpha_s=\alpha$ of $\alpha$ into non-zero dimension vectors such that $\mu(\sum_{l=1}^k\alpha_l)<\mu(\alpha)$ for $k<s$.
\end{lemma}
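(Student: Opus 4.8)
The plan is to read off the first identity from the existence and uniqueness of the Harder-Narasimhan filtration, and then to deduce \eqref{eq:HallID} from it by a formal inversion. For a fixed dimension vector $\alpha$ there are only finitely many isomorphism classes of $A$-modules of that dimension -- they are the $\GL_\alpha$-orbits on the finite set $\Rep_\alpha(A)(\B{F}_q)$ -- so every sum in the statement is finite, and it suffices to compare, for an arbitrary module $M$ with $\br{M}=\alpha$, the coefficient of $[M]$ on the two sides of $\chi_\alpha=\sum\chi_{\alpha_1}^\mu\cdots\chi_{\alpha_s}^\mu$. On the left it is $1$. On the right, expand each $\chi_{\alpha_i}^\mu$ as the sum of $[U_i]$ over $\mu$-semistable $U_i$ with $\br{U}_i=\alpha_i$; iterating the defining formula $[U][V]=\sum_W F_{UV}^W[W]$ of the Hall product shows that the coefficient of $[M]$ in $[U_1]\cdots[U_s]$ is the number of filtrations $0=F_0\subset F_1\subset\cdots\subset F_s=M$ with $F_j/F_{j-1}\cong U_{s+1-j}$. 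Since the Hall product places $U_s$ at the bottom, under the hypothesis $\mu(\alpha_1)<\cdots<\mu(\alpha_s)$ these are precisely the filtrations of $M$ with $\mu$-semistable subquotients whose slopes strictly decrease from the bottom up; summing over all the $U_i$ and all admissible decompositions, the coefficient of $[M]$ on the right therefore counts the filtrations of $M$ of this type, of which there is exactly one by \cite[Proposition~3.3]{DW2} -- the Harder-Narasimhan filtration. This proves the first identity.

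For the inverted formula I would put $\tilde\chi_\alpha$ equal to the right-hand side of \eqref{eq:HallID} and prove $\tilde\chi_\alpha=\chi_\alpha^\mu$ by induction on $\sum_{v\in Q_0}\alpha(v)$, substituting the identity just proved into every factor $\chi_{\alpha_i}$ of $\tilde\chi_\alpha$. Collecting terms, the result becomes a sum over refinements $\alpha=\gamma_1+\cdots+\gamma_t$ of $\alpha$ into $\mu$-semistable dimension vectors, in which the monomial $\chi_{\gamma_1}^\mu\cdots\chi_{\gamma_t}^\mu$ carries the coefficient $\sum(-1)^{s-1}$, summed over all ways of grouping $\gamma_1,\dots,\gamma_t$ into consecutive blocks $\alpha_1,\dots,\alpha_s$ that are internally Harder-Narasimhan ordered and satisfy $\mu(\dot\alpha_k)<\mu(\alpha)$ for $k<s$. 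The elementary input is that, since $\mu=\sigma/\theta$ with $\theta$ positive and additive, $\mu(\alpha)$ is a \emph{strict} convex combination of $\mu(\alpha_1),\dots,\mu(\alpha_s)$ whenever $s\ge2$; granting this, one checks that the coefficient vanishes for every refinement with $t\ge2$: a block boundary across which consecutive slopes increase and whose partial sum has slope $<\mu(\alpha)$ may be freely inserted or deleted, producing a cancelling pair of opposite sign, while the condition $\mu(\dot\alpha_k)<\mu(\alpha)$ rules out exactly the groupings in which no such free boundary is available. Only the refinement $t=1$, $\gamma_1=\alpha$, survives, with coefficient $1$, leaving $\chi_\alpha^\mu$. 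This combinatorial identity is \cite[Proposition~4.8]{R1}.

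I do not expect a real difficulty. The sole substantive ingredient, uniqueness of the Harder-Narasimhan filtration, is already available, and the forward direction is then pure bookkeeping with the Hall product; the one place demanding care there is the convention that the Hall product grows a filtration from the bottom, so that the increasing slope order $\mu(\alpha_1)<\cdots<\mu(\alpha_s)$ really does match the decreasing Harder-Narasimhan slopes. In the inversion the delicate point, and the reason the statement is attributed to \cite{R1} rather than treated as obvious, is that the sign-reversing pairing must be organized with respect to the order $\mu$ induces on dimension vectors and made compatible with the constraint $\mu(\dot\alpha_k)<\mu(\alpha)$, not merely with the blockwise slope ordering. In the write-up I would reproduce the coefficient comparison of the first paragraph in full, since the same filtration count reappears -- in geometric form -- in the definition of the $\Frep$ varieties used throughout the rest of the paper.
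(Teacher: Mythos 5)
Your argument is correct, but note that the paper itself offers no proof of this lemma at all: it is quoted verbatim from \cite[Proposition 4.8]{R1}, and the text relies entirely on that citation. What you have written is therefore a self-contained reconstruction of Reineke's proof rather than an alternative to anything in the paper. Both halves of your reconstruction are sound. For the first identity, your bookkeeping of the Hall-product convention is right: with $F_{UV}^W=|\Fl_{U,V}(W)|$ and $\Fl_{N_s,\dots,N_1}(M)$ defined by $L_i/L_{i-1}\cong N_i$, the coefficient of $[M]$ in $[U_1]\cdots[U_s]$ counts filtrations with $U_s$ at the bottom, so the increasing order $\mu(\alpha_1)<\cdots<\mu(\alpha_s)$ matches the decreasing slopes of the Harder--Narasimhan subquotients read from the bottom up, and uniqueness of that filtration gives coefficient $1$. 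For the inversion, your sign-reversing pairing can be pinned down cleanly as follows: call a position $j$ between $\gamma_j$ and $\gamma_{j+1}$ \emph{free} if $\mu(\gamma_j)<\mu(\gamma_{j+1})$ and $\mu(\gamma_1+\cdots+\gamma_j)<\mu(\alpha)$; both conditions depend only on the refinement and not on the grouping, so ``toggle the first free position'' is a well-defined involution on valid groupings that changes $s$ by one and hence kills all groupings possessing a free position. That every valid grouping with $t\geq 2$ has a free position then follows from your strict-convex-combination observation by propagating the no-free-position conditions from position $1$: if $\mu(\gamma_1)<\mu(\alpha)$ one forces $\mu(\gamma_1)\geq\mu(\gamma_2)\geq\cdots$, and if $\mu(\gamma_1)\geq\mu(\alpha)$ one forces $\mu(\gamma_1)<\mu(\gamma_2)<\cdots$; either chain contradicts $\mu(\alpha)$ being a strict convex combination of the $\mu(\gamma_j)$. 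One last point of hygiene: the products $\chi_{\gamma_1}^\mu\cdots\chi_{\gamma_t}^\mu$ over distinct ordered tuples are not linearly independent in $H(A)$, so you are not ``reading off'' coefficients of a basis; but your argument does not need that --- it verifies directly that each collected coefficient in the particular rearrangement of the right-hand side of \eqref{eq:HallID} vanishes except for the $t=1$ term, which suffices.
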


Our key observation is that
\begin{equation} \label{eq:Frep} r_{_{\alpha_1\cdots\alpha_s}}(A):=\int \chi_{\alpha_1}\cdots\chi_{\alpha_s}=\frac{|\Frep_{\alpha_1\cdots\alpha_s}(A)|}{|\GL_\alpha|}.
\end{equation}
So the problem boils down to counting these $\Frep$ varieties. In this paper, we will only focus on a class of algebras, for which these varieties can be effectively counted.

\begin{definition} \label{D:FPC}
We say an algebra $A$ is {\em polynomial-count} if each $\Rep_\alpha(A)$ is polynomial-count. It is called {\em F-polynomial-count} if each $\Frep_{\alpha_1\cdots\alpha_s}(A)$ is polynomial-count.
\end{definition}

\noindent We do not know a single example where $A$ is polynomial-count but not F-polynomial-count. We suspect that they are actually equivalent.
It follows from \eqref{eq:HallID} and \eqref{eq:Frep} that

\begin{lemma} \label{L:Tao} $$|\Rep_\alpha^\mu(A)|=\sum_* (-1)^{s-1} |\Frep_{\alpha_1\cdots\alpha_s}(A)|.$$
In particular, if $A$ is F-polynomial-count, then each $\Mod_\alpha^\mu(A)$ is polynomial-count when it is a geometric quotient.
\end{lemma}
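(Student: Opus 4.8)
The plan is to apply the counting map $\int: H(A)\to\B{Q}(q)$ to the Hall algebra identity \eqref{eq:HallID}. First I would note that $\int$ is $\B{Q}$-linear, so applying it to $\chi_\alpha^\mu=\sum_* (-1)^{s-1}\chi_{\alpha_1}\cdots\chi_{\alpha_s}$ gives
\[
\int\chi_\alpha^\mu=\sum_* (-1)^{s-1}\int\chi_{\alpha_1}\cdots\chi_{\alpha_s}=\sum_* (-1)^{s-1} r_{\alpha_1\cdots\alpha_s}(A).
\]
On the left, since $\chi_\alpha^\mu=\sum_{M\in\module_\alpha^\mu(A)}[M]$ and $\int[M]=a_M^{-1}=|\D{O}_M|/|\GL_\alpha|$, summing over $\GL_\alpha$-orbits in $\Rep_\alpha^\mu(A)$ partitions the variety, so $\int\chi_\alpha^\mu=|\Rep_\alpha^\mu(A)|/|\GL_\alpha|$. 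On the right, the key observation already recorded in the excerpt is $r_{\alpha_1\cdots\alpha_s}(A)=|\Frep_{\alpha_1\cdots\alpha_s}(A)|/|\GL_\alpha|$. Multiplying through by $|\GL_\alpha|$ yields the displayed formula $|\Rep_\alpha^\mu(A)|=\sum_* (-1)^{s-1}|\Frep_{\alpha_1\cdots\alpha_s}(A)|$.

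For the ``in particular'' clause, suppose $A$ is F-polynomial-count. Then each $|\Frep_{\alpha_1\cdots\alpha_s}(A)|$ agrees with a polynomial in $q$ over every finite extension $\B{F}_{q^r}$, and the sum is finite (there are only finitely many decompositions of $\alpha$ satisfying the slope condition), so $|\Rep_\alpha^\mu(A)|$ is polynomial-count. When $\Mod_\alpha^\mu(A)$ is a geometric quotient, I would invoke the fact that $q:\Rep_\alpha^\mu(A)\to\Mod_\alpha^\mu(A)$ is then a principal $\GL_\alpha/Z$-bundle in the relevant sense (or more precisely a $\PGL_\alpha$-type quotient), so that $|\Rep_\alpha^\mu(A)(\B{F}_{q^r})|$ equals $|\Mod_\alpha^\mu(A)(\B{F}_{q^r})|$ times the order of the structure group, which is itself a polynomial in $q^r$; dividing the polynomial-count of $\Rep_\alpha^\mu(A)$ by this nonzero polynomial and checking the quotient is again a polynomial gives the conclusion.

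I expect the main subtlety to be the last step: verifying that passing to the geometric quotient preserves polynomial-count. One must be careful that $|\Rep_\alpha^\mu(A)|$ is not literally $|\GL_\alpha|$ times $|\Mod_\alpha^\mu(A)|$ — the stabilizers in the stable locus are the scalars $k^*$ embedded diagonally, so the fibers of $q$ are $\GL_\alpha/k^*$-torsors, and one needs these torsors to be Zariski-locally trivial (or at least that the point count over $\B{F}_{q^r}$ is the expected product, which follows from Lang's theorem / vanishing of the relevant $H^1$ for the connected group $\GL_\alpha/k^*$). Granting that, $|\Mod_\alpha^\mu(A)(\B{F}_{q^r})| = |\Rep_\alpha^\mu(A)(\B{F}_{q^r})| \cdot |k^*(\B{F}_{q^r})| / |\GL_\alpha(\B{F}_{q^r})|$, and since $|\GL_\alpha|/|k^*| = \prod_{v}\prod_{i=0}^{\alpha(v)-1}(q^{\alpha(v)}-q^i)\big/(q-1)$ is a fixed polynomial in $q$ with the counting polynomial of $\Rep_\alpha^\mu$ divisible by it (as the division holds for infinitely many values $q^r$), the resulting rational function is a polynomial, so $\Mod_\alpha^\mu(A)$ is polynomial-count. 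The rest of the argument is purely formal manipulation of the already-established Hall-algebra identity and the orbit-counting dictionary.
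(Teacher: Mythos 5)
Your proposal is correct and follows exactly the route the paper intends: apply the linear map $\int$ to the identity \eqref{eq:HallID}, use $\int\chi_\alpha^\mu=|\Rep_\alpha^\mu(A)|/|\GL_\alpha|$ together with the paper's stated key observation $\int\chi_{\alpha_1}\cdots\chi_{\alpha_s}=|\Frep_{\alpha_1\cdots\alpha_s}(A)|/|\GL_\alpha|$, and clear the common factor $|\GL_\alpha|$; the geometric-quotient step via free $\GL_\alpha/k^*$-orbits, Lang's theorem, and Lemma \ref{L:Euler} is the standard argument the paper leaves implicit.
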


\begin{conjecture} The assumption of being a geometric quotient in the lemma can be dropped.
\end{conjecture}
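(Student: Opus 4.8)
The plan is to obtain the first identity by feeding the Harder--Narasimhan identity \eqref{eq:HallID} through the linear counting map $\int\colon H(A)\to\B{Q}(q)$, $[M]\mapsto a_M^{-1}$. No convergence issue arises: for fixed $\alpha$ over $\B{F}_q$ the variety $\Rep_\alpha(A)$ has finitely many points, hence finitely many isomorphism classes of dimension $\alpha$, so $\chi_\alpha^\mu$ and each product $\chi_{\alpha_1}\cdots\chi_{\alpha_s}$ in \eqref{eq:HallID} lies in the finite-dimensional subspace $\bigoplus_{\br M=\alpha}\B{C}[M]$ of $H(A)$, on which $\int$ is a genuine $\B{C}$-linear functional. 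On the left, $\int\chi_\alpha^\mu=\sum_{M\in\module_\alpha^\mu(A)}a_M^{-1}=|\Rep_\alpha^\mu(A)|/|\GL_\alpha|$, using $a_M^{-1}=|\D{O}_M|/|\GL_\alpha|$ and the orbit decomposition of the semistable locus; on the right, applying $\int$ termwise and using the key observation $\int\chi_{\alpha_1}\cdots\chi_{\alpha_s}=|\Frep_{\alpha_1\cdots\alpha_s}(A)|/|\GL_\alpha|$ recorded just before the statement gives $\sum_*(-1)^{s-1}|\Frep_{\alpha_1\cdots\alpha_s}(A)|/|\GL_\alpha|$. Clearing the common denominator $|\GL_\alpha|$ proves the formula.

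For the second assertion, if $A$ is F-polynomial-count then each $|\Frep_{\alpha_1\cdots\alpha_s}(A)(\B{F}_{q^r})|$ is a fixed polynomial in $q^r$ and the sum over decompositions is finite with integer coefficients, so $|\Rep_\alpha^\mu(A)(\B{F}_{q^r})|$ is polynomial in $q^r$; thus $\Rep_\alpha^\mu(A)$ is polynomial-count. To transfer this to the quotient when $\Mod_\alpha^\mu(A)$ is a geometric quotient -- equivalently, when no strictly semistable representation of dimension $\alpha$ exists -- I would first show that every point of $\Rep_\alpha^\mu(A)$, over any $\B{F}_{q^r}$ or over $\br{\B{F}_q}$, is a brick with endomorphism algebra the ground field. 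Indeed a $\mu$-semistable $M$ of dimension $\alpha$ over $K\supseteq\B{F}_q$ must be $K$-stable, since a proper Jordan--Holder subrepresentation of the same slope would base-change to one in $M\otimes_K\br{\B{F}_q}$ and make the latter a geometric strictly semistable of dimension $\alpha$; hence $\End_K(M)$ is a finite division algebra, so a field, and it cannot properly extend $K$, for otherwise $M\otimes_K\br{\B{F}_q}$ would split into at least two summands of slope $\mu(\alpha)$, again giving a geometric strictly semistable. Consequently the stabilizer of every point is the diagonal $\B{G}_m\subset\GL_\alpha$, the quotient $PG:=\GL_\alpha/\B{G}_m$ acts freely, and $\Rep_\alpha^\mu(A)\to\Mod_\alpha^\mu(A)$ is a principal $PG$-bundle, \'etale-locally trivial.

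Finally I would count fibrewise. Each fibre over an $\B{F}_{q^r}$-point is a $PG$-torsor, but the central extension $1\to\B{G}_m\to\GL_\alpha\to PG\to1$ together with $H^1(\B{F}_{q^r},\GL_\alpha)=0$ and $H^2(\B{F}_{q^r},\B{G}_m)=\operatorname{Br}(\B{F}_{q^r})=0$ forces $H^1(\B{F}_{q^r},PG)=0$, so every such torsor is trivial; hence $|\Rep_\alpha^\mu(A)(\B{F}_{q^r})|=|PG(\B{F}_{q^r})|\cdot|\Mod_\alpha^\mu(A)(\B{F}_{q^r})|$ with $|PG(\B{F}_{q^r})|=|\GL_\alpha(\B{F}_{q^r})|/(q^r-1)$. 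This exhibits $\Mod_\alpha^\mu(A)$ as counted by the rational function $(q^r-1)\,|\Rep_\alpha^\mu(A)(\B{F}_{q^r})|/|\GL_\alpha(\B{F}_{q^r})|$ of $q^r$, and Lemma~\ref{L:Euler} then upgrades ``counted by a rational function'' to ``counted by a polynomial in $\B{Z}[t]$'', giving polynomial-count. The manipulation with $\int$ is purely formal; the step I expect to be the main obstacle is the last one -- verifying that the geometric-quotient hypothesis makes the $\GL_\alpha$-action free modulo the constant reductive stabilizer $\B{G}_m$, so that the quotient is an \'etale-locally trivial principal $PG$-bundle, and that the vanishing of $H^1(\B{F}_{q^r},PG)$ keeps the fibrewise point count clean.
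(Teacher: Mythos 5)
You have not proved the statement you were given; in fact the statement is an open conjecture in the paper, and your proposal is essentially a (correct, and fairly standard) proof of Lemma \ref{L:Tao} itself rather than of the conjecture. The identity $|\Rep_\alpha^\mu(A)|=\sum_*(-1)^{s-1}|\Frep_{\alpha_1\cdots\alpha_s}(A)|$ obtained by applying $\int$ to \eqref{eq:HallID}, and the descent to the quotient via the free action of $\GL_\alpha/\B{G}_m$ plus the vanishing of $H^1(\B{F}_{q^r},\GL_\alpha/\B{G}_m)$, are exactly the content of the lemma; and your final step is explicitly conditioned on $\Mod_\alpha^\mu(A)$ being a geometric quotient, i.e.\ on the absence of strictly semistable representations. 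The conjecture is precisely about the complementary case. When strictly semistables of dimension $\alpha$ exist, $\Mod_\alpha^\mu(A)$ is only a categorical quotient whose $\B{F}_{q^r}$-points correspond to S-equivalence classes (closed orbits of polystable representations); the fibres of $\Rep_\alpha^\mu(A)\to\Mod_\alpha^\mu(A)$ are then unions of orbits with stabilizers of varying dimension, the map is nowhere near a principal bundle, and $(q^r-1)|\Rep_\alpha^\mu(A)(\B{F}_{q^r})|/|\GL_\alpha(\B{F}_{q^r})|$ is not a count of points of the quotient. So the one step you flag as ``the main obstacle'' is not merely an obstacle --- it is unavailable in exactly the situation the conjecture addresses.

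To make progress one needs a different mechanism relating the semistable count to the moduli count, namely the count of (absolutely) stable representations: the identities $\Xint S\chi_{\mu_0}=\Exp\bigl(A_{\mu_0}/(1-q)\bigr)$ and $M_{\mu_0}=\Exp(A_{\mu_0})$ of \cite{MR} and \cite{R3} recalled in Section \ref{S:S}. This is how Theorem \ref{T:final} removes the geometric-quotient hypothesis for the specific algebras $kQ[E]$ and $kA_2(Q)$: there one can evaluate the $S$-analog character $\Xint S$ on \eqref{eq:HallID} because the $\Delta$- and $S$-analogs of the $\Frep$ varieties are themselves shown to be polynomial-count (Lemma \ref{L:Frep2} and its $t$-step generalization). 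For a general algebra the hypothesis ``$A$ is F-polynomial-count'' only controls $|\Frep_{\alpha_1\cdots\alpha_s}(A)|$, not the filtration-weighted sums $\sum_{[W]}a_W^{-1}F_i(W)$ entering $\Xint S$, and that is exactly why the general statement remains a conjecture.
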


\noindent We shall see in the end that the conjecture is true for the two main classes of algebras considered in this paper.

\section{Trivial Extensions} \label{S:ext}
Given two finite-dimensional $k$-algebras $A,B$ and a $A$-$B$-bimodule $E$, we get the {\em trivial extension algebra} $B[E]=\sm{B & 0\\ _AE_B & A}$.
In the meanwhile, we can form the category $\Rep(_AE_B)$ of representations of the bimodule $_AE_B$ as follows:
The objects are triples $(M_A,M_B,\varphi)\in\module A\times\module B\times\Hom_B(M_A\otimes_A E,M_B)$. A morphism from $(M_A,M_B,\varphi)$ to $(N_A,N_B,\psi)$ is a pair $(f_A,f_B)$ making the following diagram commutes
$$\xymatrix{
M\otimes_A E \ar[r]^{\varphi} \ar[d]_{f_A\otimes\Id} & M_B\ar[d]_{f_B}\\
N\otimes_A E \ar[r]^{\psi} & N_B } $$

\begin{lemma} \cite[A.2.7]{ASS} The two categories $\Rep(B[E])$ and $\Rep(_AE_B)$ are equivalent.
\end{lemma}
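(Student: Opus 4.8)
The plan is to exhibit an explicit equivalence by the classical idempotent argument for triangular matrix algebras. Write $\Lambda := B[E] = \sm{B & 0 \\ {}_AE_B & A}$ and let $e_1 = \sm{1 & 0 \\ 0 & 0}$, $e_2 = \sm{0 & 0 \\ 0 & 1}$ be the orthogonal idempotents with $e_1 + e_2 = 1$. Inspecting the matrix multiplication, one gets ring isomorphisms $e_1\Lambda e_1 \cong B$ and $e_2\Lambda e_2 \cong A$, an identification of $e_2\Lambda e_1$ with $E$ as an $A$-$B$-bimodule (sending $e \in E$ to $\sm{0 & 0 \\ e & 0}$), and $e_1\Lambda e_2 = 0$. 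Throughout, ``module'' means right module, matching the paper's convention.

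First I would define a functor $\Phi\colon\Rep(\Lambda)\to\Rep({}_AE_B)$. For a right $\Lambda$-module $N$, set $M_A := Ne_2$, which is a right $e_2\Lambda e_2 = A$-module, and $M_B := Ne_1$, a right $e_1\Lambda e_1 = B$-module. Right multiplication $Ne_2\times e_2\Lambda e_1\to Ne_1$ is $A$-balanced by associativity, since $(na)e = n(ae)$ for $a\in e_2\Lambda e_2$, and right $B$-linear, so it factors as a map $\varphi_N\colon M_A\otimes_A E\to M_B$ in $\module B$; put $\Phi(N) := (M_A, M_B, \varphi_N)$. A $\Lambda$-linear $f\colon N\to N'$ preserves the summands $Ne_i$, hence restricts to $f_A := f|_{Ne_2}$ and $f_B := f|_{Ne_1}$, and the required commuting square is precisely the compatibility of $f$ with right multiplication by $e_2\Lambda e_1$; so $\Phi(f) := (f_A, f_B)$.

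Next I would build a quasi-inverse $\Psi$. Given $(M_A, M_B, \varphi)$, let $\Psi(M_A, M_B, \varphi)$ be the vector space $M_B\oplus M_A$ with right $\Lambda$-action $(m_B, m_A)\cdot\sm{b & 0 \\ e & a} := \big(m_B b + \varphi(m_A\otimes e),\ m_A a\big)$. The unit axiom is immediate; associativity reduces to exactly the two hypotheses on $\varphi$, because the off-diagonal entry $eb' + ae'$ of a product $\sm{b & 0 \\ e & a}\sm{b' & 0 \\ e' & a'}$ is accounted for by $\varphi(m_A\otimes e)b' = \varphi(m_A\otimes eb')$ (right $B$-linearity) together with $\varphi(m_A a\otimes e') = \varphi(m_A\otimes ae')$ ($A$-balancedness). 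On morphisms $\Psi$ sends $(f_A, f_B)$ to $f_B\oplus f_A$, which is $\Lambda$-linear iff the commuting square holds. Finally I would check that $\Phi$ and $\Psi$ are mutually quasi-inverse: $\Phi\Psi$ is the identity, and for a right $\Lambda$-module $N$ the decomposition $N = Ne_1\oplus Ne_2$ induced by $1 = e_1 + e_2$, together with the identifications above, gives a natural isomorphism $\Psi\Phi(N)\cong N$.

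I do not expect a genuine obstacle here; the only delicate point is the bookkeeping of sides — keeping straight which tensor factor of $E$ carries the $A$-action and which the $B$-action, and verifying that ``$A$-balanced and right $B$-linear'' is exactly the pair of conditions on $\varphi$ needed to make the $eb' + ae'$ term act associatively. Alternatively, the statement can simply be quoted from the standard treatment of modules over triangular matrix algebras, e.g.\ \cite{ASS}.
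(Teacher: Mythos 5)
Your proposal is correct and is essentially the same argument the paper gives (and that \cite{ASS} records): the paper's one-line proof defines exactly your functor $\Phi$ via the idempotents, $M_A=Me_A$, $M_B=Me_B$, with $\varphi$ given by right multiplication by $E=e_A\Lambda e_B^{\mathrm{op}}$-component of the matrix; you have merely written out the quasi-inverse and the associativity checks that the paper leaves implicit. No gaps.
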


\begin{proof} The equivalence is given by $F(M)=(M_A,M_B,\varphi)$, where $M_A=Me_A,M_B=Me_B$ and $\varphi(m\otimes \epsilon)=m\sm{0 & 0\\ \epsilon & 0}e_B$.
\end{proof}

In particular, if $M\in\Rep(B[E])$ corresponds to $(M_A,M_B,\varphi)$, then the dimension vector of $M$ is given by $(\br{M}_A,\br{M}_B)$. One particular case we interested in is when $E$ is simply a right $B$-module. Treating $E$ as a $k$-$B$-bimodule, the triangular algebra $\sm{B & 0\\ E & k}$ is called (trivial) {\em one-point extension} of $B$ by $E$. There is an obvious dual notion of one-point coextension $B[E^*]:=\sm{k & 0\\ E^* & B}$.

\begin{lemma} \label{L:variety} {\ }\begin{enumerate}
\item[$\bullet$] $\Rep_{(n,\alpha)}(B[E])$ is the subvariety of $\Rep_\alpha(B)\times\Hom(nE,k^\alpha)$ defined as
$$\{(M,f)\in \Rep_\alpha(B)\times\Hom(nE,k^\alpha)\mid f\in\Hom_B(nE,M)\}.$$
\item[$\bullet$] $\Rep_{(\alpha,n)}(B[E^*])$ is the subvariety of $\Rep_\alpha(B)\times\Hom(k^\alpha,nE)$ defined as
$$\{(M,f)\in \Rep_\alpha(B)\times\Hom(k^\alpha,nE)\mid f\in\Hom_B(M,nE)\}.$$
\end{enumerate}
\end{lemma}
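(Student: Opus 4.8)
The plan is to read off both descriptions from the bimodule picture of the equivalence above, by specializing one of the two factor algebras to the ground field $k$.

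For the one-point extension $B[E]=\sm{B & 0\\ E & k}$ I take $A=k$ in $B[E]=\sm{B & 0\\ {}_AE_B & A}$, so the equivalence presents an $(n,\alpha)$-dimensional $B[E]$-module as a triple $(M_A,M_B,\varphi)$ with $M_A$ an $n$-dimensional vector space, $M_B$ a $B$-module of dimension vector $\alpha$, and $\varphi\in\Hom_B(M_A\otimes_kE,M_B)$. Fixing the underlying graded vector spaces, I take $M_A=k^n$ and $M_B=k^\alpha$: a $B$-module structure on $k^\alpha$ is exactly a point $M\in\Rep_\alpha(B)$, while the canonical identification $M_A\otimes_kE=k^n\otimes_kE=nE$ turns $\varphi$ into an element of $\Hom_B(nE,M)$, sitting inside the ambient space $\Hom(nE,k^\alpha)$ of graded linear maps. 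This gives the first description as a set; to promote it to an isomorphism of varieties I would verify that (i) the condition $f\in\Hom_B(nE,M)$ is cut out inside $\Rep_\alpha(B)\times\Hom(nE,k^\alpha)$ by the equations $f\circ(nE)(a)=M(a)\circ f$, with $a$ ranging over the arrows of the quiver of $B$, which are bilinear in $(M,f)$ and so define a closed subvariety; and (ii) this closed subvariety coincides with $\Rep_{(n,\alpha)}(B[E])$ as cut out by the quiver-with-relations presentation of $B[E]$, via the linear reorganization identifying the maps attached to the arrows adjoined to the quiver of $B$ with a $B$-linear map $nP_0\to M$, under which the relations obtained from a minimal projective presentation $P_1\to P_0\to E\to 0$ (detailed in the remainder of this section) become exactly the condition that this map factor through $nE=\Coker(nP_1\to nP_0)$.

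For the coextension $B^\circ[E]=\sm{k & 0\\ E^* & B}$ I run the same argument with the two algebras interchanged, placing the $B$-$k$-bimodule $E^*=\Hom_k(E,k)$ in the lower-left corner. The equivalence then presents an $(\alpha,n)$-dimensional module as a triple $(M,V,\psi)$ with $M\in\Rep_\alpha(B)$, $V=k^n$ and $\psi\in\Hom_k(M\otimes_BE^*,k^n)$, and I would rewrite this datum by tensor--hom adjunction, using $E^{**}\cong E$ (valid as $E$ is finite-dimensional):
$$\Hom_k(M\otimes_BE^*,k^n)\cong\Hom_k(M\otimes_BE^*,k)\otimes_kk^n\cong\Hom_B(M,E^{**})\otimes_kk^n\cong\Hom_B(M,nE).$$
Since each step is natural and polynomial in $M$, the datum $\psi$ becomes a $B$-linear map $f:M\to nE$ inside $\Hom(k^\alpha,nE)$, and the closedness argument is the same, now applied to $f\circ M(a)=(nE)(a)\circ f$.

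The one step I expect to need genuine care is the passage from the categorical equivalence above to an isomorphism of representation \emph{schemes}, rather than a bijection on $k$-points. For this one observes that the functor $M\mapsto(M_A,M_B,\varphi)$ is given by the bilinear formula $\varphi(m\otimes f)=m\sm{0 & 0\\ e & 0}e_B$ in the structure constants, and that the coordinate change relating $\varphi$ to the arrow maps of $B[E]$ in (ii) is linear and uniform in $M$, so that the equivalence is realized by an isomorphism of affine varieties; in the coextension case one checks analogously that the duality isomorphism displayed above is algebraic in families. Everything else is a matter of unwinding definitions.
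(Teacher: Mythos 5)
Your proposal is correct and follows exactly the route the paper intends: the paper states this lemma without proof as an immediate consequence of the preceding equivalence $\Rep(B[E])\simeq\Rep({}_AE_B)$, and your specialization $A=k$ (resp.\ placing the $B$-$k$-bimodule $E^*$ in the lower-left corner and applying tensor--hom adjunction with $E^{**}\cong E$) is precisely the intended unwinding. The extra care you take about closedness and the scheme structure is detail the paper silently omits, and it checks out.
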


Suppose that $E\in\Rep(Q)$ is presented by $0\to P_1\xrightarrow{D} P_0\to E\to 0$ with $P_1=\bigoplus_v b_v^1 P_v$ and $P_0=\bigoplus_v b_v^0 P_v$, where $P_v$ is the indecomposable projective representation corresponding to the vertex $v$.
Then the algebra $A=kQ[E]$ can be presented by a new quiver $Q(E)$, which is obtained from $Q$ by adjoining a new vertex ``$-$" and for each $P_v$ in $P_0$ a new arrow from ``$-$" to the vertex $v$. The relations are clearly given by the matrix $D$. In reality, the presentation is always chosen to be minimal. By abuse of notation, we also use $Q[E]$ to denote the new quiver $Q(E)$ with those new relations. The one-point coextension $kQ[E^*]$ can be similarly described using injective presentation of $E$. By convention, the newly adjoined vertex is denoted by ``+".

It is clear that $E$ is the first syzygy of $S_{-}$, so $0\to P_1\xrightarrow{D} P_0\to P_{-} \to S_{-}$.
Moreover, a simple representation of $Q[E]$ is either $S_{-}$ or a simple representation of $Q$.
So we conclude that $kQ[E]$ has global dimension at most $2$. It is easy to compute the matrices $\mc{E}_A^i:=\big(\ext_A^i(S_u,S_v)\big)$. Let $\mc{E}_Q^i:=\big(\ext_Q^i(S_u,S_v)\big)$, then
$\mc{E}_A^0=\sm{1 & 0 \\ 0 & \mc{E}_Q^0}, \mc{E}_A^1=\sm{0 & b^0 \\ 0 & \mc{E}_Q^1}, \mc{E}_A^2=\sm{0 & b^1 \\ 0 & 0},$ so the Euler matrix
$\mc{E}_A$ of $A$ is $\sm{1 & \delta \\ 0 & \mc{E}_Q}$, where $\delta=b^1-b^0$ and $\mc{E}_Q$ is the Euler matrix of $Q$. Throughout this notes, $\innerprod{-,-}$ is the multiplicative Euler form of the quiver $Q$, that is, $\innerprod{\alpha,\beta}=q^{\alpha\mc{E}_Q\beta^{\T}}$. Similarly, we define $\innerprod{\alpha,\beta}_i=q^{\alpha\mc{E}_Q^i\beta^{\T}}$ for $i=0,1$.

\section{Counting $\Frep$ of $Q[E]$} \label{S:Frep}
To simplify our notation, we always use letter with tilde to indicate that $\tilde{M}\in\Rep(Q[E])$ can be represented by $(M,f_M)$, where $M\in\Rep(Q)$ and $f_M:nE\to M$.
A dimension vector with tilde, say $\tilde{\beta}$, consists of two components $(\beta_-,\beta)$, or $(\beta,\beta_+)$ for coextension.
We always set $\tilde{\alpha}=\tilde{\beta}+\tilde{\gamma}$.

\begin{lemma} \label{L:Frep} $p:\Frep_{{\tilde{\beta},\tilde{\gamma}}}(Q[E])\to \Fl_{{\tilde{\beta},\tilde{\gamma}}}$ is a fibre bundle with fibre
$$\Rep_{(\alpha_-,\gamma)}(Q[E])\times \Rep_{\tilde{\beta}}(Q[E])\times \prod_{a\in Q_1}\Hom(k^{\beta(ta)},k^{\gamma(ha)}).$$
So $$r_{\tilde{\beta},\tilde{\gamma}}(Q[E]):=\frac{|\Frep_{{\tilde{\beta},\tilde{\gamma}}}(Q[E])|}{|\GL_{\tilde{\alpha}}|}
=\innerprod{\beta,\gamma}^{-1}\smvar{\alpha_- \\ \gamma_-}|\GL_{\beta_-}|r_{\tilde{\beta}}(Q[E])r_{(\alpha_-,\gamma)}(Q[E]),$$
where $\smvar{n \\ m}$ is the quantum binomial coefficient.
\end{lemma}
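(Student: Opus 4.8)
The plan is to unwind both sides along the new vertex "$-$" and reduce to a counting problem over $\Rep(Q)$ together with the affine fibre coming from the extension data. First I would fix the flag data: a point of $\Frep_{\tilde\beta,\tilde\gamma}(Q[E])$ is a representation $\tilde M\in\Rep_{\tilde\alpha}(Q[E])$ together with a subrepresentation $\tilde L=\tilde M_1$ with $\overline{\tilde L}=\tilde\gamma$ and $\overline{\tilde M/\tilde L}=\tilde\beta$. Using Lemma \ref{L:variety}, write $\tilde M=(M,f_M)$ with $M\in\Rep_\alpha(Q)$ and $f_M\colon nE\to M$ a $Q$-morphism, where $n=\alpha_-$. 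The subrepresentation $\tilde L$ of $\tilde M$ is determined by a subspace $L$ of $k^\alpha$ stable under $M$ together with a subspace $L_-\subseteq k^{\alpha_-}$ of dimension $\gamma_-$; the compatibility is exactly that $f_M(L_-\cdot E)\subseteq L$, i.e. the restriction $f_L\colon \gamma_-E\to L$ is well-defined and makes $\tilde L=(L,f_L)$. The quotient is then $\tilde M/\tilde L = (M/L, \bar f)$ with $\bar f\colon \beta_-E\to M/L$ induced.

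The second step is to fibre $p$ over $\Fl_{\tilde\beta,\tilde\gamma}$ (the product over $Q_0$ of ordinary flag varieties, which includes the vertex "$-$"). Fix a flag, i.e. a choice of $L\subseteq k^\alpha$ and $L_-\subseteq k^{\alpha_-}$. The fibre consists of all $(M,f_M)$ such that $M$ preserves $L$ and $f_M$ is compatible. Forgetting to the $Q$-part: the $M\in\Rep_\alpha(Q)$ preserving the fixed subspace $L$ form a parabolic-type subspace; as a vector space this is $\Rep_\gamma(Q)\times\Rep_\beta(Q)\times\prod_{a\in Q_1}\Hom(k^{\beta(ta)},k^{\gamma(ha)})$ — the block-triangular representations — which accounts for the $\prod_a\Hom$ factor in the asserted fibre. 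For each such $M$, the datum $f_M\colon nE\to M$ compatible with the flag decomposes: its restriction $f_L\colon\gamma_- E\to L$ together with the induced $\bar f\colon\beta_- E\to M/L$, plus a "connecting" piece. The restriction and quotient data are precisely the $f$'s making $\tilde L\in\Rep_{(\gamma_-,\gamma)}(Q[E])$ and $\tilde M/\tilde L=\tilde{\beta}\in\Rep_{\tilde\beta}(Q[E])$; but note the fibre in the statement has $\Rep_{(\alpha_-,\gamma)}(Q[E])$, not $\Rep_{(\gamma_-,\gamma)}$, so the extra $k^{\beta_-}$-copies' worth of maps into $L$ must be absorbed — this is the $|\GL_{\beta_-}|$-vs-structure bookkeeping that also feeds the index $\bigl[\begin{smallmatrix}\alpha_-\\\gamma_-\end{smallmatrix}\bigr]$ (a Gaussian binomial counting the choice of $L_-$ inside $k^{\alpha_-}$). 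I would check that the "connecting" part, i.e. the freedom in lifting $\bar f$ back to a map into $M$ modulo what is fixed, is an affine space, so that $p$ is a fibre bundle (Zariski-locally trivial) with the stated total fibre. The homological form $\innerprod{\beta,\gamma}^{-1}$ appears because counting $Q$-subrepresentation-compatible maps and then passing from the product of parabolic/flag counts to $|\GL_{\tilde\alpha}|$ introduces exactly the Euler-form factor $q^{-\innerprod{\beta,\gamma}}=\innerprod{\beta,\gamma}^{-1}$ — this is the standard Riedtmann–Green type identity $|\Fl_{N_2,N_1}(\cdot)|$ versus $F_{N_2N_1}$, now carried out at the level of varieties rather than Hall numbers.

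The final step is purely arithmetic: with $p$ a fibre bundle, $|\Frep_{\tilde\beta,\tilde\gamma}(Q[E])| = |\Fl_{\tilde\beta,\tilde\gamma}|\cdot|\text{fibre}|$, and dividing by $|\GL_{\tilde\alpha}| = |\GL_{\alpha_-}|\cdot|\GL_\alpha|$ and rearranging $|\Fl_{\tilde\beta,\tilde\gamma}|$ into $\bigl[\begin{smallmatrix}\alpha_-\\\gamma_-\end{smallmatrix}\bigr]$ times the $Q$-flag count, together with the definitions $r_{\tilde\beta}(Q[E])=|\Rep_{\tilde\beta}(Q[E])|/|\GL_{\tilde\beta}|$ and $r_{(\alpha_-,\gamma)}(Q[E])=|\Rep_{(\alpha_-,\gamma)}(Q[E])|/|\GL_{(\alpha_-,\gamma)}|$, collapses to the displayed formula — the leftover $|\GL_{\beta_-}|$ is what remains after the $\GL$-factors in numerators and denominators cancel. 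The main obstacle I anticipate is making the identification of the fibre precise: correctly accounting for why $\Rep_{(\alpha_-,\gamma)}(Q[E])$ (with the full $\alpha_-$) rather than $\Rep_{(\gamma_-,\gamma)}(Q[E])$ is the right factor, and simultaneously producing the $|\GL_{\beta_-}|$ and the Gaussian binomial — i.e. tracking how the $\alpha_-$-dimensional space at the vertex "$-$" splits as $L_-$ of dimension $\gamma_-$ plus a complement of dimension $\beta_-$, and how maps out of the complement interact with $L$ versus $M/L$. Everything else (the block-triangular description of flag-compatible $Q$-representations, the Euler-form factor, and the $\GL$-cancellation) is routine once that fibre structure is nailed down. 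In particular I would not worry about exhibiting explicit local trivializations beyond noting that the relevant incidence conditions are linear in $M$ and $f$, so the projection from the incidence variety to the flag variety is a vector-bundle-like affine fibration, which suffices for counting.
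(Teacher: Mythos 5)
Your plan follows the same route as the paper's own sketch: fibre $p$ over $\Fl_{\tilde\beta,\tilde\gamma}$, describe the flag\nobreakdash-compatible $Q$\nobreakdash-part as block\nobreakdash-triangular data, and account for the structure map $f_M\colon\alpha_-E\to M$ by splitting it into its restriction $\gamma_-E\to L$, the induced map $\beta_-E\to M/L$, and a ``connecting'' piece. However, the step you explicitly defer --- ``I would check that the connecting part, i.e.\ the freedom in lifting $\bar f$ back to a map into $M$, is an affine space'' --- is not routine bookkeeping; it is exactly where the argument breaks. The set of lifts of $\bar f\in\Hom_Q(\beta_-E,M/L)$ to $\Hom_Q(\beta_-E,M)$ is a torsor over $\Hom_Q(\beta_-E,L)$ only \emph{when it is non-empty}, and non-emptiness is obstructed by the image of $\bar f$ under the connecting homomorphism $\Hom_Q(\beta_-E,M/L)\to\Ext_Q^1(\beta_-E,L)$ of $0\to L\to M\to M/L\to 0$. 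That obstruction depends on the extension class of $M$, i.e.\ on the supposedly free factor $\prod_a\Hom(k^{\beta(ta)},k^{\gamma(ha)})$. Concretely, each relation of $Q[E]$ is a combination of (new arrow)$\cdot$(path in $Q$), and its (quotient,\,sub)-block reads $\sum_a S(p_a)X(a)+Y(p_a)T(a)=0$, coupling the new-arrow connecting maps $X$ with the old-arrow connecting maps $Y$ and the quotient's structure maps $T$; so neither the $X$'s nor the $Y$'s can be ``stuffed independently.'' Linearity of the incidence conditions in $f$ does not rescue this: the solution space is an affine subspace whose dimension (indeed, whose non-emptiness) jumps with $M$.

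This is not merely a presentational gap, because the displayed formula itself fails in general. Take $Q=A_2=(1\xrightarrow{a}2)$ and $E=S_1$, so that $Q[E]$ is $-\xrightarrow{c}1\xrightarrow{a}2$ with relation $ac=0$. With $\tilde\gamma=(0,(0,1))$ as the sub and $\tilde\beta=(1,(1,0))$, the flag variety is a single point and every representation contains the sub, so $|\Frep_{\tilde\beta,\tilde\gamma}(Q[E])|=|\{(c,a):ac=0\}|=2q-1$, whereas the claimed fibre $\Rep_{(1,(0,1))}(Q[E])\times\Rep_{(1,(1,0))}(Q[E])\times\Hom(k,k)$ has $1\cdot q\cdot q=q^2$ points; accordingly the right-hand side of the formula is $q^2/(q-1)^3$ against the true value $(2q-1)/(q-1)^3$. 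The discrepancy is precisely $|\Hom_Q(S_1,P_1)|=1\neq q=|\Hom_Q(S_1,S_2)|\cdot|\Hom_Q(S_1,S_1)|$ for the non-split extension $P_1$ of $S_1$ by $S_2$. To be fair, the paper's proof (``we can stuff the space in the order below independently'') has the identical gap, so you have faithfully reproduced the intended argument; the statement does hold when the quotient satisfies $\beta_-=0$, or when the relevant obstruction groups $\Ext_Q^1(\beta_-E,S)$ vanish (e.g.\ $E$ projective, so $Q[E]$ is hereditary), but some such hypothesis, or a correction term recording the obstruction, is needed for the lemma as stated.
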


\begin{proof} We will sketch the fibre bundle construction by a picture.
After fixing an elements in $\Fl_{{\tilde{\beta},\tilde{\gamma}}}$, we need to fill in the missing part for a $\tilde{\alpha}$-dimensional representation of $Q[E]$. The missing part consists of a $\tilde{\gamma}$-dimensional representation $S$, a $\tilde{\beta}$-dimensional representation $T$, and a bunch of linear maps from $T(ta)$ to $S(ha)$, as indicated below.
$$\Frepext{T}{S}$$
We can stuff the space in the order below independently.
The linear maps from $T_-$ together with all representations $S$ can be identified with $\Rep_{(\alpha_-,\gamma)}(Q[E])$; all representations $T$ can be identified with $\Rep_{\tilde{\beta}}(Q[E])$, and the rest of the linear maps are $\prod_{a\in Q_1}\Hom(k^{\beta(ta)},k^{\gamma(ha)})$.

For the last formula, we only need to notice that
$$|\Fl_{{\tilde{\beta},\tilde{\gamma}}}|=\frac{\smvar{\alpha_-\\\gamma_-}|\GL_{\alpha}|}{|\GL_{\beta}||\GL_{\gamma}|\innerprod{\beta,\gamma}_0}.$$
\end{proof}

The above 2-step case can be recursively generalized to the $n$-step case. We only state the analog for the last formula and its dual.
Here the convention is that $\tilde{\alpha}_0$ is the zero vector.
\begin{align} \label{eq:frext} r_{\tilde{\alpha_1}\cdots\tilde{\alpha}_s}(Q[E])&=\prod_{i=1}^s
\innerprod{\dot{\alpha}_{i-1},\alpha_i}^{-1}\smvar{\dot{\alpha}_{i,-} \\ \alpha_{i,-} }|\GL_{\dot{\alpha}_{i-1,-}}|r_{(\dot{\alpha}_{i,-},\alpha_i)}(Q[E]);\\
r_{\tilde{\alpha}_s\cdots\tilde{\alpha}_1}(Q[E^*])&=\prod_{i=1}^s
\innerprod{\alpha_i,\dot{\alpha}_{i-1}}^{-1}\smvar{\dot{\alpha}_{i,+} \\ \alpha_{i,+}}|\GL_{\dot{\alpha}_{i-1,+}}|r_{(\alpha_i,\dot{\alpha}_{i,+})}(Q[E^*]).
\end{align}
Now the problem boils down to count those affine representation varieties $\Rep_{\tilde{\alpha}}(Q[E])$.

\section{Counting Affine} \label{S:rep}

For any dimension vector $\beta$, we denote by $\Gr^\beta(E)$ the variety parameterizing all $\beta$-dimensional quotient representations of $E$, and define
\begin{align*} \Hom_Q(E,\alpha)_\beta=\{(M,\phi,E_1,M_1&)\in \Rep_\alpha(Q) \times \Hom(E,k^\alpha) \times\Gr^{\beta}(E)\times \Gr_\beta(\alpha) \mid \\
& \phi\in\Hom_Q(E,M), E/\Ker\phi=E_1, \Img\phi=M_1\}.\end{align*}

\begin{lemma} \label{L:rep}
$p:\Hom_Q(E,\alpha)_\beta\to \Gr^{\beta}(E)\times \Gr_\beta(\alpha)$ is a fibre bundle with fibre
$$\GL_\beta\times \Rep_{\alpha-\beta}(Q)\times \bigoplus_{a\in Q_1}(\Hom(k^{(\alpha-\beta)(ta)},k^{\beta(ha)}).$$
So
$$r_{(n,\alpha)}(Q[E])=\sum_{\alpha=\gamma+\beta}\frac{|\Gr^{\beta}(nE)|}{\innerprod{\gamma,\beta}|\GL_n|}r_{\gamma}(Q).$$
Dually, we have a formula for the one-point coextension:
$$r_{(\alpha,n)}(Q[E^*])=\sum_{\alpha=\gamma+\beta}\frac{|\Gr_{\gamma}(nE)|}{\innerprod{\gamma,\beta}|\GL_n|}r_{\beta}(Q).$$
\end{lemma}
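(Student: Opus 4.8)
The plan is to analyze the projection $p\colon \Hom_Q(E,\alpha)_\beta \to \Gr^\beta(E)\times\Gr_\beta(\alpha)$ by describing what data remains to be specified once a pair $(E_1, M_1)$ — a $\beta$-dimensional quotient of $E$ together with a $\beta$-dimensional subspace of $k^\alpha$ with its induced subrepresentation structure — has been fixed. First I would note that giving $\phi\in\Hom_Q(E,M)$ with $E/\Ker\phi\cong E_1$ and $\Img\phi=M_1$ amounts to: (i) choosing an isomorphism of representations between $E_1$ and the subrepresentation $M_1\subset M$ carries by $k^\beta$ (this contributes the $\GL_\beta$ factor, since $E_1$ and $M_1$ are abstractly identified but we range over all concrete isomorphisms — more precisely, once the abstract isomorphism type is pinned down, the set of compatible identifications is a $\GL_\beta$-torsor), and (ii) equipping the quotient space $k^\alpha/M_1\cong k^{\alpha-\beta}$ with an arbitrary $Q$-representation structure, giving the $\Rep_{\alpha-\beta}(Q)$ factor, and (iii) choosing the "off-diagonal" linear maps that glue $M_1$ and $k^\alpha/M_1$ into the representation $M$ on $k^\alpha$, which for each arrow $a$ is an element of $\Hom(k^{(\alpha-\beta)(ta)},k^{\beta(ha)})$. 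The key point is that these three pieces of data can be chosen independently and that each fiber is isomorphic (as a variety) to the stated product; local triviality follows because over a suitable open cover of the base one can choose the relevant splittings algebraically, exactly as in the proof of Lemma \ref{L:Frep}.

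The main obstacle — and the step I would be most careful about — is verifying that condition (i) is genuinely a $\GL_\beta$-torsor over the locus where $E_1$ and $M_1$ have the same isomorphism type, and that over the locus where they do not, the fiber is empty, so that the base of the bundle should really be taken as the subvariety of $\Gr^\beta(E)\times\Gr_\beta(\alpha)$ where the two representations are isomorphic. Here I expect the argument to parallel the identification used in defining $\Hom_Q(E,\alpha)_\beta$: the constraint $\Img\phi=M_1$ forces $M_1$ to be a quotient of $E$, so the compatible $(E_1,M_1)$ are precisely those lying in the image of the natural incidence correspondence, and over that image the choice of $\phi$ inducing the prescribed kernel and image is exactly the choice of an isomorphism $E_1\xrightarrow{\sim} M_1$, of which there are $|\GL_\beta(\B{F}_q)|$ modulo nothing further since $\phi$ is then determined. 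Once this is pinned down, counting is immediate: summing $|\text{fibre}|\cdot|\text{base}|$ over $\beta$ and dividing by $|\GL_n|$ (recalling $E$ is replaced by $nE$ and that $r_\gamma(Q)=|\Rep_\gamma(Q)|/|\GL_\gamma|$), the factors $|\GL_\beta|$, $|\GL_{\alpha-\beta}|$, and the cardinality of the affine gluing space $\prod_a \Hom(k^{\gamma(ta)},k^{\beta(ha)})$ reorganize, via the standard identity $|\Gr_\beta(\alpha)| = \smvar{\alpha\\\beta}$ and $|\GL_\alpha|=|\GL_\beta||\GL_\gamma|\innerprod{\gamma,\beta}_0\smvar{\alpha\\\beta}$ used already in Lemma \ref{L:Frep}, into precisely $\frac{|\Gr^\beta(nE)|}{\innerprod{\gamma,\beta}|\GL_n|}r_\gamma(Q)$ with $\gamma=\alpha-\beta$.

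For the dual statement, I would simply dualize: applying the same analysis to $Q^\circ[E]$ with injective presentations in place of projective ones, and using Lemma \ref{L:variety} which describes $\Rep_{(\alpha,n)}(Q^\circ[E])$ as $\{(M,f)\in\Rep_\alpha(Q)\times\Hom(k^\alpha,nE)\mid f\in\Hom_Q(M,nE)\}$, the roles of sub and quotient are exchanged, $\Gr^\beta(nE)$ is replaced by $\Gr_\gamma(nE)$ parameterizing subrepresentations, and the Euler form $\innerprod{\gamma,\beta}$ appears in the same position because the off-diagonal gluing maps now go the other way but contribute the same cardinality. No new ideas are needed beyond the bookkeeping already carried out in the non-dual case.
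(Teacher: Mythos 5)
Your geometric argument is essentially sound and, in its final bookkeeping, arrives at exactly the right formula; but note that it is a genuinely different route from the paper's. The paper dismisses the fibre-bundle statement as ``not hard to verify'' and instead derives the counting formula purely algebraically: it applies Reineke's integration map $\oldint\colon H(Q)\to\D{P}_{\innerprod{Q}}$ (an algebra morphism) to the Hall-algebra identity $\bigl(\sum_{[U]}[U]\bigr)\bigl(\sum_{[V]}|\Epi_Q(M,V)|[V]\bigr)=\sum_{[W]}|\Hom_Q(M,W)|[W]$, matches coefficients of $x^\alpha$, and then sets $M=nE$ and invokes Lemma \ref{L:variety}. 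Your approach instead fleshes out the geometric fibration; what it buys is an actual proof of the first assertion of the lemma (the fibre-bundle structure), which the paper never writes down, at the cost of the explicit torsor/splitting analysis. Both are legitimate, and your reorganization of the factors $|\GL_\beta|$, $|\GL_{\gamma}|$, $\smvar{\alpha\\\beta}$ and the off-diagonal $\prod_a\Hom(k^{\gamma(ta)},k^{\beta(ha)})$ into $\innerprod{\gamma,\beta}^{-1}|\Gr^\beta(nE)|\,r_\gamma(Q)$ checks out.

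The one point you must repair is the worry in your second paragraph: you suggest the base might have to be cut down to the locus where $E_1$ and $M_1$ are isomorphic as representations, with empty fibres elsewhere. This is a red herring, and if you actually restricted the base the count (and the lemma's fibre-bundle claim over the full product) would come out wrong. The resolution is that $\Gr_\beta(\alpha)$ is a product of \emph{ordinary} Grassmannians: the point $M_1$ is a bare collection of subspaces of $k^{\alpha(v)}$ carrying no representation structure, because the representation $M$ is itself part of the data being constructed in the fibre. Once you pick the vector-space isomorphisms $E_1(v)\xrightarrow{\sim}M_1(v)$ (a $\GL_\beta$-torsor with no further condition), the subrepresentation structure on $M_1$ is \emph{defined} by transport along them, and the remaining freedom is exactly $\Rep_{\alpha-\beta}(Q)$ together with the off-diagonal maps. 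Hence the fibre is the same nonempty product over every point of $\Gr^\beta(E)\times\Gr_\beta(\alpha)$, and no incidence condition intervenes. With that clarified, your argument for the direct case and its dualization for $Q^\circ[E]$ are complete.
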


\begin{proof} The fibre bundle construction is not hard to verify. Since we only need the last formula, we give a Hall algebra proof for that.
We denote by $\Mon_Q(M,N)$ and $\Epi_Q(M,N)$ the set of all monomorphisms and epimorphisms from $M$ to $N$ respectively. Fix a representation $M$, then the following identities clearly holds in the Hall algebra $H(Q)$
$$\Big(\sum_{[U]}[U]\Big)\Big(\sum_{[V]}|\Epi_Q(M,V)|[V]\Big) = \sum_{[W]}|\Hom_Q(M,W)|[W].$$
Let $\mc{P}_{\innerprod{Q}}$ be the completed quantum polynomial algebra $\mb{Q}(q)[\bold{x}]$, where the multiplication rule is $x^\alpha x^\beta=\innerprod{\alpha,\beta}^{-1}x^{\alpha+\beta}$.
Then the map $\oldint: H(Q)\to \mc{P}_{\innerprod{Q}}$ sending $[M]\to a_M^{-1}x^{\br{M}}$ is an algebra morphism \cite{R1}.
Here, we use the slanted $\int$ to distinguish the one with target $\mb{Q}(q)$.
Apply $\oldint$ to both sides, we get
\begin{align*}  & \oldint \sum_{[U]}[U] \oldint \sum_{[V]}|\Epi_Q(M,V)|[V] = \oldint\sum_{[W]}|\Hom_Q(M,W)|[W] \\
 \Leftrightarrow & \sum_\gamma r_\gamma(Q)x^\gamma\sum_{[V]}a_V^{-1}|\Epi_Q(M,V)|x^{\br{V}} = \sum_{[W]}a_W^{-1}|\Hom_Q(M,W)|x^{\br{W}} \\
 \Leftrightarrow & \sum_\gamma r_\gamma(Q)x^\gamma\sum_\beta |\Gr^\beta(M)| x^\beta= \sum_{[W]}\frac{|\mc{O}_W|}{|\GL_\alpha|}|\Hom_Q(M,W)|x^\alpha \quad (\alpha=\br{W}) \\
 \Leftrightarrow & \sum_{\beta+\gamma=\alpha}\innerprod{\gamma,\beta}^{-1}r_{\gamma}(Q)|\Gr^{\beta}(M)|=\sum_{[W]\mid \br{W}=\alpha}\frac{|\mc{O}_W|}{|\GL_\alpha|}|\Hom_Q(M,W)|.
\end{align*}
Now we set $M:=nE$, then the formula follows from Lemma \ref{L:variety}. The dual formula can be obtained by applying $\oldint$ to the identity:
$$\Big(\sum_{[U]}|\Mon_Q(U,M)|[U]\Big)\cdot\Big(\sum_{[V]}[V]\Big) = \sum_{[V]}|\Hom_Q(W,M)|[W].$$
\end{proof}

\begin{remark} \label{R:gen} Let $R(A)$ be the generating functions $R(A):=\sum_\alpha r_\alpha(A)x^\alpha$, and
$$F^\bullet(E):=\sum_\beta \Gr^\beta(E)x^\beta, \text{\quad and\quad} F_\bullet(E):=\sum_\gamma \Gr_\gamma(E)x^\gamma.$$
If we set
$$F^\infty(E)=\sum_{n=1}^{\infty}\frac{F^\bullet(nE)x_{-}^n}{|\GL_n|} \text{\quad and\quad} F_\infty(E)=\sum_{n=1}^{\infty}\frac{F_\bullet(nE)x_{+}^n}{|\GL_n|},$$
then Lemma \ref{L:rep} can be rewritten as the equations in $\mc{P}_{\innerprod{Q}}[x_{\pm}]$:
$$R(Q[E])=R(Q)F^\infty(E), \text{\quad and\quad} R(Q[E^*])=F_\infty(E)R(Q).$$
\end{remark}

\begin{definition} A representation $E\in\Rep(Q)$ is called {\em polynomial-count}, if all its Grassmannians $\Gr_{\gamma}(E)$ are polynomial-count. It is called {\em add-polynomial-count}, if each $nE$ is polynomial-count.
\end{definition}

\begin{corollary} $nE$ is polynomial-count if and only if $\Rep_{(n,\alpha)}(Q[E])$ is polynomial-count for any $\alpha$.
\end{corollary}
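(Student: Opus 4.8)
The plan is to prove both directions using the generating-function reformulation of Lemma \ref{L:rep} recorded in Remark \ref{R:gen}, together with the key fact (Lemma \ref{L:polycount}, or more elementarily just the definition of polynomial-count) that a variety is polynomial-count precisely when its point-count function $q^r\mapsto|X(\B{F}_{q^r})|$ agrees with a fixed polynomial in $q^r$ for all $r$. The statement is essentially a formal consequence of the identity $R(Q[E]) = R(Q)F^\infty(E)$, once we are careful about which pieces are already known to be polynomial-count.

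For the ``if'' direction, suppose $nE$ is polynomial-count, i.e.\ every $\Gr^\beta(nE)$ has a counting polynomial. First I would observe that $\Rep_\gamma(Q)$ is always polynomial-count: it is an affine space over any finite field, so $r_\gamma(Q) = |\Rep_\gamma(Q)|/|\GL_\gamma|$ is a rational function in $q$ with integer coefficients whose denominator is the (monic, over $\B{Q}$) product defining $|\GL_\gamma|$. Then the formula
$$r_{(n,\alpha)}(Q[E]) = \sum_{\alpha=\gamma+\beta}\frac{|\Gr^{\beta}(nE)|}{\innerprod{\gamma,\beta}|\GL_n|}\,r_{\gamma}(Q)$$
exhibits $r_{(n,\alpha)}(Q[E])$ as a finite $\B{Z}$-linear combination (with coefficients rational in $q$) of products of counting functions, each of which is now polynomial or rational in $q$ and compatible with all field extensions $\B{F}_{q^r}$. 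Multiplying through by $|\GL_{(n,\alpha)}(\B{F}_{q^r})|$ shows $|\Rep_{(n,\alpha)}(Q[E])(\B{F}_{q^r})|$ equals a fixed rational function of $q^r$; since the left side is a non-negative integer for all $r$ and the rational function has bounded denominator, it must in fact be a polynomial in $q^r$ (e.g.\ by Lemma \ref{L:Euler}), so $\Rep_{(n,\alpha)}(Q[E])$ is polynomial-count. One should double-check that the same counting polynomial works simultaneously for all $r$, which is automatic because every term in the formula is itself uniformly polynomial/rational in the variable.

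For the ``only if'' direction, suppose $\Rep_{(n,\alpha)}(Q[E])$ is polynomial-count for every $\alpha$. I would run the same formula in reverse, by induction on $\alpha$ (say on $\theta(\alpha)$ for some positive weight $\theta$, or on $\dim\alpha$). In the sum above isolate the $\beta=\alpha$, $\gamma=0$ term, which contributes $\frac{|\Gr^\alpha(nE)|}{|\GL_n|}$ (using $\innerprod{0,\alpha}=1$ and $r_0(Q)=1$); all other terms involve $\Gr^\beta(nE)$ with $\beta<\alpha$, which are polynomial-count by the inductive hypothesis, while $r_{(n,\alpha)}(Q[E])$ is polynomial-count by assumption. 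Solving for $|\Gr^\alpha(nE)|$ then expresses it as a $\B{Z}[q,q^{-1},|\GL_n|^{-1}]$-combination of counting polynomials, hence a counting polynomial, so $\Gr^\alpha(nE)$ is polynomial-count. Running the induction over all $\alpha$ shows every Grassmannian of $nE$ is polynomial-count, i.e.\ $nE$ is polynomial-count.

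The only genuine subtlety -- and the step I expect to require the most care -- is the clean bookkeeping of rationality versus polynomiality: a priori the formulas only give that the relevant point counts are rational functions of $q^r$ with a common denominator, and one must invoke the integrality/positivity input (Lemma \ref{L:Euler}, or directly the fact that a rational function taking integer values at infinitely many prime powers with bounded denominator is a polynomial) to upgrade ``rational-count'' to ``polynomial-count.'' Everything else is linear algebra over the polynomial ring and the elementary observation that $\Rep_\gamma(Q)$ and $\GL_\gamma$ are polynomial-count. No Hall-algebra input beyond Lemma \ref{L:rep} is needed.
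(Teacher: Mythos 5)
Your argument is correct and is exactly the intended one: the paper states this corollary without proof as an immediate consequence of Lemma \ref{L:rep}, whose formula is triangular in the $|\Gr^\beta(nE)|$ with invertible leading coefficient $|\GL_n|^{-1}$, so polynomial-count-ness passes in both directions (using Lemma \ref{L:Euler} to upgrade rational-count to polynomial-count, as you note). Your write-up just makes the inversion and the induction on $\alpha$ explicit.
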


If $E$ is add-polynomial-count, then $kQ[E]$ is F-polynomial-count by \eqref{eq:frext}. It follows from Lemma \ref{L:Euler}, \ref{L:Tao}, \ref{L:Frep}, and \ref{L:rep} that
\begin{theorem} \label{T:ext} $\Rep_\alpha^\mu(Q[E])$ can be explicitly counted in terms of $\Gr_\gamma(nE)$'s.
In particular, if $E$ is add-polynomial-count, then each $\Mod_\alpha^\mu(Q[E])$ is polynomial-count when it is a geometric quotient.
\end{theorem}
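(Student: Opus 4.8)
The plan is to assemble Theorem~\ref{T:ext} directly from the chain of lemmas already established, with essentially no new input. First I would observe that the starting point is Lemma~\ref{L:Tao}, which writes
$$|\Rep_\alpha^\mu(Q[E])|=\sum_* (-1)^{s-1}|\Frep_{\tilde\alpha_1\cdots\tilde\alpha_s}(Q[E])|,$$
the sum running over the admissible decompositions of $\alpha$; equivalently, dividing by $|\GL_{\tilde\alpha}|$, the quantity $r_\alpha^\mu(Q[E])$ is the alternating sum of the $r_{\tilde\alpha_1\cdots\tilde\alpha_s}(Q[E])$. So it suffices to count each term $r_{\tilde\alpha_1\cdots\tilde\alpha_s}(Q[E])$ in terms of Grassmannians of multiples of $E$.

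The second step is to invoke the recursive formula \eqref{eq:frext}, which expresses $r_{\tilde\alpha_1\cdots\tilde\alpha_s}(Q[E])$ as a product of binomial coefficients $\smvar{\dot\alpha_{i,-}\\ \alpha_{i,-}}$, orders of general linear groups $|\GL_{\dot\alpha_{i-1,-}}|$ — all of which are manifestly polynomials in $q$ — and the ``affine'' factors $r_{(\dot\alpha_{i,-},\alpha_i)}(Q[E])$. Thus the only non-elementary ingredient is the collection of numbers $r_{(n,\alpha)}(Q[E])$. For these I would quote Lemma~\ref{L:rep}:
$$r_{(n,\alpha)}(Q[E])=\sum_{\alpha=\gamma+\beta}\frac{|\Gr^\beta(nE)|}{\innerprod{\gamma,\beta}\,|\GL_n|}\,r_\gamma(Q).$$
Here $r_\gamma(Q)=|\Rep_\gamma(Q)|/|\GL_\gamma|$ is a fixed rational function in $q$ depending only on $Q$ (indeed $\Rep_\gamma(Q)$ is an affine space, so $r_\gamma(Q)$ is an explicit ratio of products of $q$-powers and $|\GL|$'s), and the Euler-form weights $\innerprod{\gamma,\beta}$ are monomials in $q$. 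Substituting the Lemma~\ref{L:rep} expression for each affine factor into \eqref{eq:frext} and then into Lemma~\ref{L:Tao} yields a completely explicit expression for $|\Rep_\alpha^\mu(Q[E])|$ in terms of the quantities $|\Gr^\beta(nE)|$; noting that $\Gr^\beta(nE)$ and $\Gr_\gamma(nE)$ are interchanged under $nE\leftrightarrow (nE)^*$ for the appropriate dimension vectors, this proves the first assertion.

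For the polynomiality statement, suppose $E$ is add-polynomial-count, i.e.\ every $\Gr_\gamma(nE)$ — equivalently every $\Gr^\beta(nE)$ — has a counting polynomial. Then by Lemma~\ref{L:rep} each $r_{(n,\alpha)}(Q[E])$ is a $\B{Z}$-linear combination, with coefficients that are rational functions of $q$, of counting polynomials evaluated at $q$, hence is itself given by a rational function of $q$ agreeing with the point count over every $\B{F}_{q^r}$; since $\Rep_{(n,\alpha)}(Q[E])$ is a genuine variety its count must in fact be polynomial (or one appeals directly to the Corollary identifying polynomial-count of $nE$ with that of $\Rep_{(n,\alpha)}(Q[E])$). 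Feeding this through \eqref{eq:frext} and Lemma~\ref{L:Tao}, $|\Rep_\alpha^\mu(Q[E])|$ is polynomial in $q^r$; and when $\mu$ is coprime to $\alpha$ (or more generally when the GIT quotient is geometric) the quotient map $q:\Rep_\alpha^\mu(Q[E])\to\Mod_\alpha^\mu(Q[E])$ is a principal $\PGL$-bundle (up to the usual central torus), so $|\Mod_\alpha^\mu(Q[E])|=|\Rep_\alpha^\mu(Q[E])|/|\PGL_\alpha|$ — strictly, one works with the free $\PGL_{\tilde\alpha}/\text{(torus)}$ action as in the standard GIT setup — exhibiting $\Mod_\alpha^\mu(Q[E])$ as polynomial-count.

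I do not expect a serious obstacle here: the theorem is genuinely a corollary of the preceding lemmas, and the only point requiring care is the bookkeeping of the division by $|\GL|$-factors when passing from $\Frep$-counts to $\Rep^\mu$-counts and then to the GIT quotient. The one place where the argument is slightly more than mechanical is justifying that a rational-in-$q$ count of an actual variety is forced to be polynomial; for this one cites Lemma~\ref{L:Euler} (a rational counting function of a variety lies in $\B{Z}[t]$), so that once we know $|\Mod_\alpha^\mu(Q[E])(\B{F}_{q^r})|$ is a rational function of $q^r$ it is automatically a polynomial.
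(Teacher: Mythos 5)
Your proposal is correct and follows essentially the same route as the paper, which deduces the theorem directly from Lemma \ref{L:Tao}, the product formula \eqref{eq:frext} of Lemma \ref{L:Frep}, and Lemma \ref{L:rep}, with the geometric-quotient step already built into Lemma \ref{L:Tao}. The only additional care you take (justifying via Lemma \ref{L:Euler} that a rational count of a variety is a polynomial, and spelling out the free group action on the stable locus) is consistent with what the paper leaves implicit.
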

We will see in the last section that the assumption of being a geometric quotient can be dropped.
Polynomial-count representations of quivers include all rigid ones because of \cite[Corollary 5.2]{Fc1}, but there are many more (see Example \ref{ex:1}, \ref{ex:2n}, and \ref{ex:P2}).

\begin{question} Is there a representation, which is polynomial-count but not add-polynomial-count?
\end{question}

\section{Application to Homological Stratification} \label{S:HS}
\begin{definition} For any representation $E$, the $E$-homological stratification of $\Rep_\alpha(Q)$ is the decomposition of $\Rep_\alpha(Q)$ into (finite) disjoint union of locally closed subvarieties $\Rep_\alpha(Q;E,h)$, where
$$\Rep_\alpha(Q;E,h)=\{M\in\Rep_\alpha(Q)\mid \dim\Hom_Q(E,M)=h\}.$$
\end{definition}

By Lemma \ref{L:variety}, we know that for $n\geq 0$,
$$|\Rep_{(n,\alpha)}(Q[E])|=\sum_h |\Rep_\alpha(Q;E,h)|q^{nh}.$$
The coefficient matrix of above linear system is a non-degenerated Vandermonde-type matrix,
so we can solve all $\Rep_\alpha(Q;E,h)$. In particular,
$\Rep_\alpha(Q;E,h)$ is polynomial-count if and only if $E$ is add-polynomial-count.
We will see that the above is still true for
$$\Rep_\alpha^\mu(Q;E,h)=\{M\in\Rep_\alpha^\mu(Q)\mid \dim\Hom_Q(E,M)=h\}.$$

\begin{definition} Let $\mu=\frac{\sigma}{\theta}$ be any slope function for $Q$. The (negative) extension $\mu_-$ to $Q[E]$ with respect to an dimension vector $\alpha$ is $\frac{\sigma_-}{\theta_-}$, where $\theta_-(n,\alpha)=n+\theta(\alpha)$ and $\sigma_-(n,\alpha)=\epsilon n+\sigma_\alpha(\alpha)$ for some sufficiently small positive $\epsilon\in\mb{Q}$.
Similarly, we define the (positive) extension of $\mu_+$ to $Q[E^*]$ with respect to $\alpha$ as $\frac{\sigma_+}{\theta_+}$, where $\theta_+(\alpha,n)=\theta(\alpha)+n$, and $\sigma_+(\alpha,n)=\sigma_\alpha(\alpha)-\epsilon n$.
\end{definition}

The following lemma was proved in \cite[Theorem 5.2]{ER} for $E$ projective, but the argument goes through for any $E$.
\begin{lemma} We have the following identity in $\mc{P}_{\innerprod{Q}}$:
\begin{equation} \label{eq:framed} \Big(\sum_\beta r_\beta^\mu(Q)x^\beta\Big)\Big(\sum_\gamma r_{(n,\gamma)}^{\mu_-}(Q[E])x^\gamma\Big) = \sum_\alpha \Big(\sum_{M\in\Rep_\alpha^\mu(Q)}\frac{|\mc{O}_M||\Hom_Q(nE,M)|}{|\GL_{\alpha}||\GL_n|}\Big)x^\alpha.\end{equation}
\end{lemma}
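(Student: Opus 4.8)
The plan is to mimic the Hall-algebra proof of Lemma~\ref{L:rep}, but carried out inside the exact subcategory $\module_{\mu_0}(Q)$ of $\mu$-semistable representations of a fixed slope (after replacing $\mu$ by $\mu_\alpha$, so the relevant slope is $0$). The starting point is the same Hall-algebra identity relating homomorphisms and epimorphisms: for a fixed module $W$ and the fixed $nE$, one has
$$\Big(\sum_{[U]}[U]\Big)\Big(\sum_{[V]}|\Epi_Q(nE,V)|[V]\Big) = \sum_{[W]}|\Hom_Q(nE,W)|[W],$$
but now I would restrict the left-hand factors to run over semistable modules of slope $0$, i.e. work with $\chi^\mu$-type sums rather than $\chi$-type sums. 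The key point, exactly as in \cite[Theorem~5.2]{ER}, is that if $W$ is $\mu$-semistable of slope $0$ and $U\subset W$ with $W/U\cong V$, then $U$ and $V$ are forced to be $\mu$-semistable of slope $0$ as well when $U$ is the image of a map from $nE$ \emph{relative to the extended slope} $\mu_-$; this is precisely the bookkeeping encoded in the definition of $\mu_-$ with the small $\epsilon$, which makes the extra framing vertex ``$-$'' behave correctly. So the restricted identity still closes up.

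Next I would apply Reineke's counting morphism $\oldint: H(Q)\to\D{P}_{\innerprod{Q}}$ to the restricted identity. As recorded in Section~\ref{S:rep}, $\oldint$ is an algebra morphism, $\oldint\,[M]=a_M^{-1}x^{\br{M}}$, and $\oldint\sum_{[V]}|\Epi_Q(nE,V)|[V]=\sum_\beta |\Gr^\beta(nE)|x^\beta$. On the left, $\oldint\sum_{M\in\module^\mu_\beta(Q)}[M]=r^\mu_\beta(Q)x^\beta$; on the right, the coefficient of $x^\alpha$ becomes $\sum_{W\in\Rep_\alpha^\mu(Q)}\frac{|\D{O}_W|}{|\GL_\alpha|}|\Hom_Q(nE,W)|$ after expanding $a_W^{-1}=|\D{O}_W|/|\GL_\alpha|$. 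Finally one reinterprets $\sum_\beta|\Gr^\beta(nE)|x^\beta$ on the left: by Lemma~\ref{L:variety} and the fibre-bundle computation behind Lemma~\ref{L:rep}, the product of $\sum_\beta r^\mu_\beta(Q)x^\beta$ with $\sum_\beta |\Gr^\beta(nE)|x^\beta/|\GL_n|$ is exactly $\sum_\gamma r^{\mu_-}_{(n,\gamma)}(Q[E])x^\gamma$ — this is the $\mu$-semistable refinement of the identity $R(Q[E])=R(Q)F^\infty(E)$ from Remark~\ref{R:gen}, and it holds because the fibre-bundle decomposition of $\Rep_{(n,\gamma)}(Q[E])$ restricts stratum-by-stratum to the $\mu_-$-semistable locus. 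Dividing through by $|\GL_n|$ and matching coefficients of $x^\alpha$ yields \eqref{eq:framed}.

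The main obstacle is the slope-compatibility step in the first paragraph: one must check that, with the perturbed extended slope $\mu_-$ and $\epsilon$ small, the $\mu_-$-semistability of an $(n,\gamma)$-dimensional representation of $Q[E]$ is equivalent to the $\mu$-semistability of its underlying $\gamma$-dimensional $Q$-representation \emph{together with} the condition that the framing map $nE\to M$ lands appropriately, and that subobjects/quotients in the relevant Hall products inherit semistability. This is where \cite{ER} does real work in the projective case; the claim in the excerpt is that the argument is insensitive to projectivity of $E$, since it only uses that the framing vertex carries weight $\epsilon$ and that HN/JH filtrations exist (Lemma~1.\ref*{L:HNid}). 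I would verify this by the standard $\epsilon\to 0^+$ argument: for $\epsilon$ smaller than the reciprocal of any denominator occurring among finitely many relevant slope comparisons, the strict inequality $\mu_-(\text{sub})\le\mu_-(M)$ degenerates to $\mu(\text{sub})\le\mu(M)$ on the $Q$-part while the framing coordinate only breaks ties in the desired direction. Once this is in place, the rest is the formal transport through $\oldint$ together with the already-established fibre bundle of Lemma~\ref{L:rep}, both routine.
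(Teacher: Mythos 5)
Your overall strategy (a Hall-algebra identity pushed through $\oldint$, with the $\epsilon$-perturbation in $\mu_-$ doing the bookkeeping at the framing vertex) is the right one, and since the paper itself only cites \cite[Theorem 5.2]{ER} here, reconstructing that argument is exactly what is needed. But the two load-bearing identities you assert are both false, and they are inconsistent with each other. First, the ``restricted'' epimorphism identity fails: for $W$ $\mu$-semistable and $\phi\in\Hom_Q(nE,W)$, neither $\Img\phi$ nor $W/\Img\phi$ need be semistable, so the pairs $(W,\phi)$ with $W$ semistable are not enumerated by triples (semistable quotient, semistable image with an epimorphism from $nE$, extension data). Second, the claimed identity $\bigl(\sum_\beta r^\mu_\beta(Q)x^\beta\bigr)\bigl(\sum_\beta|\Gr^\beta(nE)|x^\beta/|\GL_n|\bigr)=\sum_\gamma r^{\mu_-}_{(n,\gamma)}(Q[E])x^\gamma$ is not true: take $\mu$ trivial ($\sigma=0$), so $r^\mu_\beta=r_\beta$; then Lemma \ref{L:rep} would force $r^{\mu_-}_{(n,\gamma)}(Q[E])=r_{(n,\gamma)}(Q[E])$ for all $\gamma$, i.e.\ every framed representation would be $\mu_-$-semistable, which already fails for $Q$ a single vertex and $E=k$, where $\mu_-$-semistability of $(k^\gamma,f\colon k^n\to k^\gamma)$ forces $f$ to be surjective, among other conditions. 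Finally, if both of your claims held, the left-hand side of \eqref{eq:framed} would carry an extra factor of $\sum_\beta r^\mu_\beta(Q)x^\beta$ relative to your computation of the right-hand side, so the argument cannot close as written.

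The missing idea is the correct canonical filtration of the framed object. For $M$ $\mu$-semistable of slope $\mu_0$ and $\phi\in\Hom_Q(nE,M)$, the canonical subobject is not $\Img\phi$ but the \emph{minimal subrepresentation $U\subseteq M$ of slope $\mu_0$ containing $\Img\phi$}; it exists and is unique because the slope-$\mu_0$ subrepresentations of $M$ form a lattice closed under intersection inside the abelian subcategory $\module_{\mu_0}(Q)$. Then $(U,\phi)$, viewed as an $(n,\br{U})$-dimensional representation of $Q[E]$, is $\mu_-$-semistable (this is exactly where the sign and size of $\epsilon$ enter: a proper subrepresentation of slope $\mu_0$ containing $\Img\phi$ would destabilize, and nothing else can), while $M/U$ is $\mu$-semistable of slope $\mu_0$; conversely, every two-step filtration of $(M,\phi)$ with $\mu_-$-semistable framed sub of dimension $(n,\gamma)$ and $\mu$-semistable quotient of dimension $(0,\beta)$ arises this way, uniquely. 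Counting these filtrations via the Hall product (quotient times sub, in the paper's convention for $F^W_{UV}$) and applying $\oldint$ yields \eqref{eq:framed} directly. This argument uses only the left-exactness of $\Hom_Q(nE,-)$ and the existence of $\Img\phi$, which is precisely why projectivity of $E$ is irrelevant and the paper's assertion that \cite{ER} extends verbatim is justified. I would rebuild your proof around this uniqueness statement rather than around the epimorphism identity of Lemma \ref{L:rep}.
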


\begin{theorem} \label{T:HS} $|\Rep_\alpha^\mu(Q;E,h)|$ can be explicitly computed from $\Gr_\gamma(nE)$. When $E$ is add-polynomial-count and $\Mod_\alpha^\mu(Q)$ is a geometric quotient, each homological strata on $\Mod_\alpha^\mu(Q)$ is polynomial-count.
\end{theorem}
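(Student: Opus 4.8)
The plan is to solve for the quantities $|\Rep_\alpha^\mu(Q;E,h)|$ from a Vandermonde-type linear system, exactly as in the unframed case, but now using the semistable version of the identity recorded in equation \eqref{eq:framed}. First I would extract from \eqref{eq:framed} the coefficient of $x^\alpha$ for each $n \geq 0$, obtaining
\begin{equation*}
\sum_{\beta+\gamma=\alpha} \innerprod{\beta,\gamma}^{-1} r_\beta^\mu(Q)\, r_{(n,\gamma)}^{\mu_-}(Q[E]) = \sum_{M\in\Rep_\alpha^\mu(Q)}\frac{|\D{O}_M|\,|\Hom_Q(nE,M)|}{|\GL_\alpha||\GL_n|}.
\end{equation*}
Stratifying the right-hand side by the value $h = \hom_Q(E,M)$ (note $|\Hom_Q(nE,M)| = q^{nh}$ on the stratum $\Rep_\alpha^\mu(Q;E,h)$ since $\Hom_Q(nE,M)=\Hom_Q(E,M)^{\oplus n}$), the right-hand side becomes $\sum_h \bigl(\sum_{M\in\Rep_\alpha^\mu(Q;E,h)} \tfrac{|\D{O}_M|}{|\GL_\alpha|}\bigr) q^{nh}/|\GL_n|$. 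Writing $r_{\alpha,h}^\mu := \sum_{M\in\Rep_\alpha^\mu(Q;E,h)}|\D{O}_M|/|\GL_\alpha|$, which is what we want, and multiplying through by $|\GL_n|$, we get for each $n\ge 0$ a linear equation $\sum_h q^{nh} r_{\alpha,h}^\mu = c_{n,\alpha}$, where $c_{n,\alpha}$ is an explicit expression in $r_{(n,\gamma)}^{\mu_-}(Q[E])$ and the semistable Poincaré-type data $r_\beta^\mu(Q)$.

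The matrix $(q^{nh})_{n,h}$ is Vandermonde in the distinct nodes $q^h$, hence invertible over $\B{Q}(q)$; letting $n$ range over finitely many values (as many as there are possible $h$'s, which is a finite set bounded by $\hom_Q(E,\cdot)$ on dimension vector $\alpha$) lets us solve uniquely for each $r_{\alpha,h}^\mu$, and therefore for $|\Rep_\alpha^\mu(Q;E,h)| = |\GL_\alpha| r_{\alpha,h}^\mu / \text{(generic orbit size data)}$ — or more cleanly, one first solves for $|\Rep_\alpha^\mu(Q;E,h)|$ directly from $|\Rep_{(n,\alpha)}^{\mu_-}(Q[E])|$ after checking that the semistable framed representation variety $\Rep_{(n,\alpha)}^{\mu_-}(Q[E])$ fibres appropriately over $\Rep_\alpha^\mu(Q)$ with the framing data contributing the factor $q^{nh}$; this is the content of the identity \eqref{eq:framed} and the choice of $\epsilon$ sufficiently small guarantees that $\mu_-$-semistability of $(M,f)$ forces $M$ to be $\mu$-semistable. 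The key input is that each $r_{(n,\gamma)}^{\mu_-}(Q[E])$, hence each $|\Rep_{(n,\gamma)}^{\mu_-}(Q[E])|$, is itself computable — and this follows from Theorem \ref{T:ext} applied to the algebra $Q[E]$ with the slope $\mu_-$, expressing it in terms of Grassmannians $\Gr_\delta(mE)$. Chaining these gives the first assertion: $|\Rep_\alpha^\mu(Q;E,h)|$ is an explicit expression in the $\Gr_\gamma(E)$'s.

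For the polynomial-count statement: if $E$ is add-polynomial-count, then by Theorem \ref{T:ext} every $\Mod_\alpha^{\mu_-}(Q[E])$ is polynomial-count when it is a geometric quotient, and more to the point every $|\Rep_{(n,\gamma)}^{\mu_-}(Q[E])|$ is given by a polynomial in $q$ (being an alternating sum of $\Frep$ counts by Lemma \ref{L:Tao}, each of which is polynomial by \eqref{eq:frext} and Lemma \ref{L:rep} in the add-polynomial-count case). Then the $c_{n,\alpha}$ are polynomials in $q$, the Vandermonde inverse has entries in $\B{Q}(q)$, so each $r_{\alpha,h}^\mu$ — and hence $|\Rep_\alpha^\mu(Q;E,h)|$ — is a rational function of $q$ that takes the right values over every $\B{F}_{q^r}$; by Lemma \ref{L:Euler} it is in fact a polynomial, so $\Rep_\alpha^\mu(Q;E,h)$ is polynomial-count. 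Finally, when $\Mod_\alpha^\mu(Q)$ is a geometric quotient, $q: \Rep_\alpha^\mu(Q)\to \Mod_\alpha^\mu(Q)$ restricts to a $\GL_\alpha/(\text{center})$-bundle, and the homological condition $\hom_Q(E,M)=h$ is constant on orbits, so it descends to a locally closed stratification of $\Mod_\alpha^\mu(Q)$ with $|\Mod_\alpha^\mu(Q;E,h)| = |\Rep_\alpha^\mu(Q;E,h)|/|\GL_\alpha/k^*|$ (up to the usual count of a $\GL_\alpha$-torsor over $\B{F}_{q^r}$, which is polynomial), so each stratum is polynomial-count. The main obstacle I anticipate is not the algebra but the bookkeeping in justifying the framed semistable identity \eqref{eq:framed}: one must verify that for $\epsilon$ small the $\mu_-$-HN and $\mu_-$-JH behavior of framed objects matches that of the underlying $Q$-representation, so that the stratification of the left-hand side by the framing rank is compatible with the $h$-stratification on the right; the excerpt attributes this to the argument of \cite[Theorem 5.2]{ER}, and I would simply cite and adapt it.
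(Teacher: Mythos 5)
Your proposal follows essentially the same route as the paper: extract the coefficient of $x^\alpha$ from \eqref{eq:framed}, observe that the right-hand side equals $\sum_h |\Rep_\alpha^\mu(Q;E,h)|q^{nh}$ up to normalization, invert the Vandermonde-type matrix in the nodes $q^h$, and feed in Theorem \ref{T:ext} to make the framed semistable counts explicit in terms of $\Gr_\gamma(nE)$. The extra details you supply (the polynomial-count conclusion via Lemma \ref{L:Euler} and the descent of the stratification along the geometric quotient) are correct elaborations of what the paper leaves implicit.
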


\begin{proof} According to Theorem \ref{T:ext}, all $r_{(n,\alpha)}^{\mu_-}(Q[E])$'s can be computed from $\Gr_\gamma(nE)$, and so does the right hand side of \eqref{eq:framed}. Notice that
$$\sum_{M\in\Rep_\alpha^\mu(Q)}|\mc{O}_M||\Hom_Q(nE,M)|=\sum_h |\Rep_\alpha^\mu(Q;E,h)|q^{nh}.$$
We can invert the same Vandermonde-type matrix as before to solve $|\Rep_\alpha^\mu(Q;E,h)|$.
\end{proof}

\section{3-vertex Examples} \label{S:example}


Consider the $n$-arrow Kronecker quiver $K_n$ and its extension by an $(m,d)$-dimensional representation $E$. Then we can view the algebra $kK_n[E]$ as an algebra coextended from $K_m$ by a $(d,n)$-dimensional representation $E^\circ$.

It follows from Remark \ref{R:gen} that
\begin{proposition} \label{P:dual} $F^\infty(E)$ and $F_\infty(E^\circ)$ are related by
$$R(K_n)F^\infty(E)=R(K_n[E])=F_\infty(E^\circ)R(K_m).$$
In particular, if $E$ is add-polynomial-count, then so is $E^\circ$.
\end{proposition}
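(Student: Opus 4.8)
The plan is to read off the two factorizations of $R(K_n[E])$ given by Remark \ref{R:gen} and to match them by means of the algebra isomorphism recorded just before the statement. So I would first make that isomorphism precise at the level of presentations: building $kK_n[E]$ from a minimal projective presentation $0\to P_1\xrightarrow{D}P_0\to E\to 0$ over $K_n$ adjoins a vertex $-$ with $b_v^0$ arrows $-\to v$ and relations read off from $D$; since $K_n$ is hereditary one checks that the resulting quiver with relations is literally the copresentation quiver of a one-point coextension of the $m$-arrow Kronecker quiver $K_m$ by a representation $E^\circ$ of dimension $(d,n)$ --- concretely $E^\circ$ is the ``transpose'' of $E$ across the $(m,n,d)$-symmetry of a three-term tensor, the two Kronecker subquivers exchanging the roles of their arrow-count and one of their dimensions. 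Under the resulting matching of vertices (new vertex $-$ with the source of $K_m$, the source $1$ of $K_n$ with the sink of $K_m$, and the sink $2$ of $K_n$ with the new coextension vertex $+$) and the corresponding relabelling of variables, $R(K_n[E])$ and $R(K_m^\circ[E^\circ])$ are the same element. Applying Remark \ref{R:gen} to the extension gives $R(K_n[E])=R(K_n)F^\infty(E)$, and to the coextension of $K_m$ by $E^\circ$ it gives $R(K_m^\circ[E^\circ])=F_\infty(E^\circ)R(K_m)$; equating the two yields $R(K_n)F^\infty(E)=R(K_n[E])=F_\infty(E^\circ)R(K_m)$.

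For the ``in particular'', suppose $E$ is add-polynomial-count. Then every $nE$ is polynomial-count, hence so is every quotient Grassmannian $\Gr^\beta(nE)=\Gr_{n\br{E}-\beta}(nE)$, so the coefficients $|\Gr^\beta(nE)|/|\GL_n|$ of $F^\infty(E)$ are values at the size of the base field of fixed rational functions of one variable; because all the varieties involved (the $\Rep$- and $\Fl$-varieties, the Grassmannians, and the Hall-algebra identities underpinning Lemma \ref{L:rep}), as well as the algebra isomorphism, are defined over $\B{F}_q$, the identity of Remark \ref{R:gen} persists after base change to every $\B{F}_{q^r}$, i.e.\ ``universally'' in the variable. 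The same applies to $R(K_n)$ and $R(K_m)$, whose coefficients are explicit monomials divided by products of orders of general linear groups because $K_n,K_m$ are hereditary path algebras; since $R(K_m)$ has constant term $1$ it is invertible in the completed quantum polynomial algebra, and $R(K_m)^{-1}$ again has coefficients that are rational functions with denominators products of $|\GL_k|$, in particular with no pole at any prime power. Solving $F_\infty(E^\circ)=R(K_n)F^\infty(E)R(K_m)^{-1}$ then exhibits each coefficient $|\Gr_\gamma(nE^\circ)|/|\GL_n|$ of $F_\infty(E^\circ)$ as the specialization, valid for every $r\geq 1$, of one fixed rational function. Thus $\Gr_\gamma(nE^\circ)$ is counted by a rational function, which by Lemma \ref{L:Euler} must lie in $\B{Z}[t]$; hence every $\Gr_\gamma(nE^\circ)$ is polynomial-count, i.e.\ $E^\circ$ is add-polynomial-count.

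I expect the one genuinely delicate point to be this last universality-and-inversion step: one must check that inverting $R(K_m)$ commutes with every specialization to $\B{F}_{q^r}$ --- equivalently, that no cancellation among the $|\GL_k|$-denominators produces a spurious pole at a prime power --- and that base change of all the underlying geometry really does amount to substituting $q^r$ into the formulas, so that Lemma \ref{L:Euler} applies verbatim. Everything else is either immediate from Remark \ref{R:gen} or a purely combinatorial verification of the quiver identification $kK_n[E]\cong kK_m^\circ[E^\circ]$, which for the present purpose may simply be quoted.
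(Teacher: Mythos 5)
Your argument is correct and follows the paper's own route: the displayed identity is exactly what Remark \ref{R:gen} gives once one accepts the identification $kK_n[E]\cong kK_m^\circ[E^\circ]$ stated just before the proposition, and the ``in particular'' clause follows by inverting $R(K_m)$ (invertible since its constant term is $1$, with no poles at prime powers since the denominators are products of $|\GL_k|$'s and powers of $q$) and appealing to Lemma \ref{L:Euler}. The universality worry you raise is harmless for the reason you indicate, so nothing further is needed.
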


Let $A:=kK_m[E^*]$ be the algebra coextended from $K_m$ by a representation $E$ of dimension $\epsilon$. For any dimension vector $\alpha=(\alpha_1,\alpha_2)$ of $K_m$, there is a unique choice of weight $\sigma$ up to scalar such that $\sigma(\alpha)=0$. For the rest of this section, we always take $\mu=\frac{\sigma}{\theta}$ for different $\alpha$'s.

The first two isomorphisms below can be easily established by Lemma \ref{L:variety}.
\begin{proposition} $\Mod_{(\gamma,1)}^{\mu_+}(A)\cong \Gr_\gamma(E)$ and $\Mod_{(\gamma_1,1,1)}^{\mu_-}(A)\cong\Gr_{(\gamma_1,1)}(E)$.\\
Assume that $E$ is not too special so that $\Gr_{(n,1)}(E)$ is empty.
\begin{align*} &|\Mod_{(1,2,1)}^{\mu_-}(A)|=|\Gr_{(1,2)}(E)|+([m-1]-[\epsilon_2-1])|\Gr_{(1,1)}(E)|,\\
& |\Mod_{(2,2,1)}^{\mu_-}(A)|=|\Gr_{(2,2)}(E)|+([2m-1]-[\epsilon_2-1])|\Gr_{(2,1)}(E)|,\\
& \quad\quad\cdots\cdots\quad
\end{align*}
where $[n]$ is the quantum number.
\end{proposition}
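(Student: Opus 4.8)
The plan is to compute $|\Mod_\alpha^{\mu_-}(A)|$ for the specific dimension vectors $\alpha = (n,2,1)$ by combining the general machinery of Theorem~\ref{T:ext} with the geometry special to coextensions of Kronecker quivers. First I would set up the stability condition explicitly: with $A = kK_m^\circ[E]$, a dimension vector of the form $(\gamma_1,\gamma_2,1)$ (the last coordinate being the $+$-vertex), the weight $\sigma$ normalized by $\sigma(\gamma)=0$, and the positive extension $\mu_-$ defined via the sufficiently small $\epsilon$ from the definition above. The point of choosing $\epsilon$ small is that $\mu_-$-semistability of a $(\gamma_1,\gamma_2,1)$-representation reduces, for the right range of $\gamma$, to a combination of $\mu$-semistability of the underlying $K_m$-representation together with a genericity condition on the map $k^{\gamma_1}\oplus k^{\gamma_2} \to E^*$ (equivalently $E \to$ the $K_m$-part), which is exactly what makes the first two isomorphisms $\Mod_{(\gamma,1)}^{\mu_+}(A)\cong\Gr_\gamma(E)$ and $\Mod_{(\gamma_1,1,1)}^{\mu_-}(A)\cong\Gr_{(\gamma_1,1)}(E)$ hold. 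I would then read off from Lemma~\ref{L:Tao}, Lemma~\ref{L:Frep} and Lemma~\ref{L:rep} the exact count $|\Rep_{(n,2,1)}^{\mu_-}(A)|$, and divide by the appropriate factor $|\GL_{(n,2,1)}|$ once the quotient is geometric (which holds since, under the genericity forced by the last coordinate being $1$, $\mu_-$ is coprime to $\alpha$).

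The core of the computation is the HN-identity of Lemma~\ref{L:Tao}: $|\Rep_\alpha^{\mu_-}(A)| = \sum_* (-1)^{s-1}|\Frep_{\alpha_1\cdots\alpha_s}(A)|$, where the sum is over admissible ordered decompositions. The key simplification is that the $+$-vertex carries dimension $1$ and has the largest (or by the $\epsilon$-perturbation, a controlling) slope, so it must sit in the last block $\alpha_s$; this collapses the set of admissible decompositions to those of the $K_m$-dimension vector $(n,2)$ with an added $1$ in the final block. For $\alpha=(1,2,1)$ the decompositions of $(1,2)$ are few — essentially $(1,2)$ itself, $(0,1)+(1,1)$, $(1,1)+(0,1)$, $(0,2)+(1,0)$, $(1,0)+(0,2)$, $(0,1)+(1,0)+(0,1)$, etc. — filtered by the slope inequalities $\mu(\dot\alpha_k) < \mu(\alpha)$. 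Each surviving term is evaluated by Lemma~\ref{L:Frep} (which expresses $r_{\tilde\beta,\tilde\gamma}(Q[E])$ through binomial Gaussian coefficients, $|\GL|$-factors, Euler forms, and smaller $r$'s) and then Lemma~\ref{L:rep}, which feeds in $|\Gr^\beta(nE)|$ — here with $n=1$, so just $|\Gr_\gamma(E)|$ and $|\Gr_{(\gamma_1,\gamma_2)}(E)|$. Collecting terms and using the stated hypothesis that $\Gr_{(n,1)}(E)$ is empty (so $E$ has no quotient/sub of that shape, killing several otherwise-present contributions), the many $q$-power bookkeeping terms should telescope into $|\Gr_{(1,2)}(E)| + ([m-1]-[\epsilon_2-1])|\Gr_{(1,1)}(E)|$, where $\epsilon=(\epsilon_1,\epsilon_2)$ is $\br E$ and $[\,\cdot\,]$ is the quantum integer $[n]=\frac{q^n-1}{q-1}$; the coefficient $[m-1]-[\epsilon_2-1]$ arises as the count of lines in the fibre of a $\Gr$-type bundle minus a correction for the vanishing locus. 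The general $(\mathfrak n,2,1)$ case is the same computation with $n$ replaced by $\mathfrak n$ throughout, giving the coefficient $[\mathfrak n m - 1] - [\epsilon_2 - 1]$, which is what the displayed list records for $\mathfrak n = 1, 2$.

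I would organize the proof as: (i) identify the admissible decompositions and discard those violating the slope filtration or producing empty $\Frep$ (using the $\Gr_{(n,1)}(E)=\varnothing$ hypothesis); (ii) apply Lemmas~\ref{L:Frep} and \ref{L:rep} to each remaining term to get a closed expression in $q$, $|\GL|$'s, Euler forms, and Grassmannian counts of $E$; (iii) divide by $|\GL_\alpha|$ and simplify the resulting rational expression into quantum integers. The main obstacle is step (iii): the raw output of Lemma~\ref{L:Tao} is an alternating sum of several terms each a ratio of products of $|\GL|$'s, Gaussian binomials and powers of $q$, and showing that the Grassmannian-free part cancels and that the coefficient of $|\Gr_{(1,1)}(E)|$ collapses exactly to $[m-1]-[\epsilon_2-1]$ requires careful tracking of the Euler form $\innerprod{\,\cdot\,,\cdot\,}$ of $K_m$ and of how the coextension vertex interacts with it. A secondary subtlety is verifying that $\mu_-$ is genuinely coprime to $(\mathfrak n,2,1)$ for all relevant $\mathfrak n$ so that $\Mod_\alpha^{\mu_-}(A)$ is a geometric quotient and Lemma~\ref{L:Tao} applies on the nose; this should follow from the $1$ in the last coordinate together with the smallness of $\epsilon$, but it must be checked to legitimize passing from the variety count to the quotient count.
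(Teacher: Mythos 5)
Your plan is exactly the route the paper takes: the two isomorphisms come from the explicit description of $\Rep_{(\alpha,n)}(Q^\circ[E])$ in Lemma \ref{L:variety} together with the $\epsilon$-perturbed stability (which, since the extension vertex carries dimension $1$, is coprime and gives a geometric quotient), and the displayed counts are obtained by feeding the admissible HN-decompositions of Lemma \ref{L:Tao} through Lemmas \ref{L:Frep} and \ref{L:rep} and simplifying, with the hypothesis $\Gr_{(n,1)}(E)=\varnothing$ killing the extra terms. You have correctly identified all the ingredients and where the bookkeeping effort lies, so the proposal is sound and matches the paper's (unwritten but clearly intended) argument.
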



\begin{example} \label{ex:1} Consider the quiver $$\Atwo{a}{b}$$ with relation $ab=0$. The corresponding algebra $A$ is one-point-extended from the Dynkin quiver $A_2$ by the simple $S_2$.
So $|\Rep_{(n,\alpha)}(A)|$ can be computed by Lemma \ref{L:Tao}. Note that $\Gr^\beta(nS_1)$ is just the usual Grassmannian variety $\Gr^{\beta_1}(n)$.
It follows that the quiver
$$\Atwo{a,c}{b,d}$$ with relations $ab=0,cd=0$ is polynomial-count. This algebra is extended from the Kronecker quiver $K_2$ by a decomposable non-rigid representation of dimension $(2,2)$.
\end{example}

\begin{example} \label{ex:2n} Fix $n\in\mb{N}$, we consider the quiver
$$\Atwo{a,b}{x_1\dots x_n}$$ with relation $AX=0$, where $A=(a,b)$ and $X=\sm{x_1 & x_2 & \cdots & x_{n-1}\\x_2 & x_3 & \cdots & x_{n}}$.
It is extended form $K_n$ by $E_n$ presented by
$$0\to (n-1)P_3\xrightarrow{X^T} 2P_2 \to E_n\to 0.$$
It is also coextended from $K_2$ by the exceptional $E_n^\circ$ presented by
$$0\to E_n^\circ\to nI_2 \xrightarrow{B^T} (n-1)I_1 \to 0,$$ where $B=\sm{a & b & 0 & 0 &\cdots & 0\\0 & a & b & 0 & \cdots & 0\\ \vdots & \vdots & & \ddots & \ddots & \vdots \\ 0 & 0 & \cdots & 0 & a & b}$.
Although $E_n$ is not rigid, it follows from Proposition \ref{P:dual} that it is add-polynomial-count.
$|\Gr_\gamma(E_n^\circ)|$ can be recursively computed by the cluster theory and Lemma \ref{L:GrAlg}. A closed formula was given in \cite[Theorem 4.3]{Sz}.
\begin{align*}
|\Gr_\gamma(E_n^\circ)|=\begin{cases}
1 & {\gamma=(0,0), (n+1,n)}\\
\smvar{n-\gamma_1 \\ \gamma_2-\gamma_1}\smvar{\gamma_2+1 \\ \gamma_1} & \text{ otherwise. }
\end{cases}\end{align*}
\end{example}

Now we recall \cite[Proposition 2.8]{Fc1}.
We also draw some easy consequences, which are useful for counting the Grassmannians of representations.
\begin{lemma} \label{L:GrAlg} Assume that $\dim U=\alpha_1$ and $\dim V=\alpha_2$.
\begin{equation} \sum_{\gamma_1+\gamma_2=\gamma}\innerprod{\gamma_1,\alpha_2-\gamma_2}|\Gr_{\gamma_1}(U)||\Gr_{\gamma_2}(V)|=\sum_{[W]}\frac{|\Ext_Q(U,V)_W|}{|\Ext_Q(U,V)|}|\Gr_{\gamma}(W)|.
\end{equation}
Now suppose that $\Ext_Q(U,V)=0$. Then
$$F_\bullet(U\oplus V)=\sum_{\gamma_1,\gamma_2} \innerprod{\gamma_1,\alpha_2-\gamma_2}\Gr_{\gamma_1}(U)\Gr_{\gamma_2}(V)x^{\gamma_1+\gamma_2}.$$
Hence,if both $U$ and $V$ are (add)-polynomial-count, then so is $U\oplus V$.
Moreover, if $\Ext_Q(V,U)=k^e$ and $W$ is the only non-trivial middle term of the extensions, then
$$(q^e-1)F_\bullet(W)=q^e \sum_{\gamma_1,\gamma_2} \innerprod{\gamma_2,\alpha_1-\gamma_1}\Gr_{\gamma_2}(V)\Gr_{\gamma_1}(U)x^{\gamma_1+\gamma_2}-F_\bullet(U\oplus V).$$
\end{lemma}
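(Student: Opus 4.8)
The plan is to treat the displayed identity as the single substantive input and then read off the remaining assertions from it formally. The identity is \cite[Proposition 2.8]{Fc1}; I would reprove it by counting the set $\D{S}=\{(\xi,L)\mid\xi\in\Ext_Q(U,V),\ L\subseteq W_\xi,\ \br{L}=\gamma\}$ in two ways, where $W_\xi$ is a fixed middle term of the extension $0\to V\to W_\xi\to U\to 0$ (working on a fixed underlying graded space so that $\D{S}$ is literally a set). Grouping by the isomorphism type of $W_\xi$ gives $|\D{S}|=\sum_{[W]}|\Ext_Q(U,V)_W|\,|\Gr_\gamma(W)|$, which is $|\Ext_Q(U,V)|$ times the right-hand side. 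On the other hand, send $(\xi,L)$ to $(L_1,L_2)$ with $L_1\subseteq U$ the image of $L$ and $L_2=L\cap V\subseteq V$, so $\br{L}_1+\br{L}_2=\gamma$ automatically. For fixed $(L_1,L_2)$ with $\br{L}_i=\gamma_i$, a class $\xi$ contributes to this fibre exactly when its image under $\Ext_Q(U,V)\to\Ext_Q(L_1,V/L_2)$ vanishes, and the $L$'s lying over such a $\xi$ are then in bijection with the sections of a split short exact sequence, hence form a torsor under $\Hom_Q(L_1,V/L_2)$. Since $kQ$ is hereditary, $\Ext_Q^2=0$, so $\Ext_Q(U,V)\to\Ext_Q(L_1,V)\to\Ext_Q(L_1,V/L_2)$ is a composite of surjections; thus the fibre over $(L_1,L_2)$ has size $|\Ext_Q(U,V)|\,|\Hom_Q(L_1,V/L_2)|/|\Ext_Q(L_1,V/L_2)| = |\Ext_Q(U,V)|\,\innerprod{\gamma_1,\alpha_2-\gamma_2}$, and summing over all $(L_1,L_2)$ recovers the identity. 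This fibre count is where all of the work sits, and it is exactly the step that uses the absence of relations on $Q$.

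Granting the identity, the case $\Ext_Q(U,V)=0$ is immediate: then $\Ext_Q(U,V)$ consists of the single split class, with middle term $U\oplus V$, so the right-hand side collapses to $|\Gr_\gamma(U\oplus V)|$; multiplying by $x^\gamma$ and summing over $\gamma$ gives the stated expansion of $F_\bullet(U\oplus V)$. Each $\innerprod{\gamma_1,\alpha_2-\gamma_2}$ is a power of $q$ and the sum is finite, so the identity holds verbatim over every $\B{F}_{q^r}$ and expresses $|\Gr_\gamma(U\oplus V)|$ as a fixed (a priori Laurent, hence genuine) polynomial in $q^r$ built from the counting polynomials of $U$ and $V$; hence $U\oplus V$ is polynomial-count once $U$ and $V$ are. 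For the add-variant, apply this with $U,V$ replaced by $nU,nV$, noting $n(U\oplus V)=nU\oplus nV$, $\Ext_Q(nU,nV)=\Ext_Q(U,V)^{\oplus n^2}=0$, and that $nU,nV$ are polynomial-count whenever $U,V$ are add-polynomial-count.

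Finally, for the last formula I would apply the identity to extensions of $V$ by $U$, i.e.\ with $U$ and $V$ exchanged (and indices relabelled as in the statement). Here $|\Ext_Q(V,U)|=q^e$; the zero class has middle term $U\oplus V$, and by hypothesis each of the remaining $q^e-1$ classes has middle term $W$, so the right-hand side of the exchanged identity is $\frac{1}{q^e}|\Gr_\gamma(U\oplus V)|+\frac{q^e-1}{q^e}|\Gr_\gamma(W)|$. Solving for $|\Gr_\gamma(W)|$, multiplying by $x^\gamma$, and summing over $\gamma$ produces exactly the claimed expression for $(q^e-1)F_\bullet(W)$; the computation is insensitive to whether $W\cong U\oplus V$, so no extra hypothesis is needed. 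In short, everything beyond the first identity is bookkeeping — specializing a degenerate case, resumming generating functions, and using that the Euler form contributes only monomials in $q$ — while the one genuine difficulty, the fibre count above, is already carried out in \cite[Proposition 2.8]{Fc1}.
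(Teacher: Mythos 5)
Your proposal is correct. The paper itself supplies no proof of this lemma: it simply recalls the displayed identity as \cite[Proposition 2.8]{Fc1} and treats the remaining assertions as immediate consequences, which is exactly the bookkeeping you carry out (specializing to the split case when $\Ext_Q(U,V)=0$, swapping $U$ and $V$ and solving for $|\Gr_\gamma(W)|$ when $\Ext_Q(V,U)=k^e$ has a unique non-split middle term, and passing to $nU\oplus nV$ for the add-variant). The one place you go beyond the paper is in reproving the cited identity by double-counting pairs $(\xi,L)$ with $\xi\in\Ext_Q(U,V)$ and $L\subseteq W_\xi$; your fibre computation is sound --- the subrepresentations $L$ with $L\cap V=L_2$ and image $L_1$ form a torsor under $\Hom_Q(L_1,V/L_2)$ over the classes $\xi$ killed by $\Ext_Q(U,V)\to\Ext_Q(L_1,V/L_2)$, and hereditariness gives both the surjectivity of that map and the identification $|\Hom_Q(L_1,V/L_2)|/|\Ext_Q(L_1,V/L_2)|=\innerprod{\gamma_1,\alpha_2-\gamma_2}$. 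In \cite{Fc1} the identity is obtained by Hall-algebra manipulations (applying the $\Delta$-type counting character to the product $[U][V]$ and invoking Riedtmann's formula), which is essentially the same count packaged algebraically; your direct geometric version buys self-containedness at the cost of redoing work the paper deliberately outsources.
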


\begin{example} We add one arrow to Example \ref{ex:2n}:
$$\triangleone{a,b}{c}{x_1\dots x_n}$$
Then it is extended from $K_n$ by $E_n\oplus P_3$, or coextended from $K_2$ by $E_n^\circ\oplus I_1$.
Let $A_n=kK_2[(E_n^\circ\oplus I_1)^*]$.
Since $\Ext_{K_2}(E_n^\circ,I_1)=0$, we can use Lemma \ref{L:GrAlg} or compute directly $r_\alpha(A_n)$.
So we are able to find all $|\Mod_\alpha^\mu(A_n)|$. For example,
\begin{align*} & |\Mod_{(1,1,1)}^{\mu_-}(A_n)|=q^2+2q+1,\\
& |\Mod_{(1,1,1)}^{\mu_+}(A_n)|=[n]+[3]-1,\\
& |\Mod_{(2,2,1)}^{\mu_-}(A_n)|=q^4 + 2q^3 + 4q^2 + 2q + 1.
\end{align*}
However, all $\Mod_{(1,1,1)}^{\mu_-}(A_n)$ are different, they are Hirzebruch surfaces $\mb{F}_n$ \cite{Fm2}.
\end{example}


\begin{example}
Consider quiver $$\Atwo{a,b,c}{x,y,z}$$ with relation $xa+yb+zc=0$. It is coextended from $K_3$ by a rigid module presented by $0\to E\to 3I_2\xrightarrow{\sm{a & b & c}} I_1\to 0$. Similar calculation as before gives
\begin{align*}
&|\Mod_{(1,1,1)}^{\mu_\pm}(A)|=[2][3],\\
&|\Mod_{(2,1,1)}^{\mu_\pm}(A)|=|\Mod_{(1,1,2)}^{\mu_\pm}(A)|=[3],\\
&|\Mod_{(1,2,1)}^{\mu_\pm}(A)|=[3][5],\\
&|\Mod_{(2,2,1)}^{\mu_-}(A)|=|\Mod_{(1,2,2)}^{\mu_-}(A)|=[3][5](1,0,1),\\
&|\Mod_{(1,2,2)}^{\mu_-}(A)|=|\Mod_{(2,2,1)}^{\mu_+}(A)|=[3](1,1,3,3,3,1,1).
\end{align*}
Here we write the sequence $(a_n,\dots,a_1,a_0)$ for the polynomial $\sum_{i=0}^n a_i q^i$.
The first one is \cite{Fm2} a divisor $\mc{D}$ on $\mb{P}^2\times\mb{P}^2$ of bidegree $(1,1)$, or equivalently the complete flag variety $\mc{F}_3$ of $k^3$.

Now consider the deformation $E'\oplus I_2$ of $E$, where $0\to E'\to 2I_2\xrightarrow{\sm{a & b}} I_1\to 0$.
Since $\Ext_Q(I_2,E')=k$ with $E$ the only non-trivial middle term, we can compute $F_\bullet(E')$ using Lemma \ref{L:GrAlg}.
$$F_\bullet(E')=1+[2]x^{(1,0)}+[2]^2x^{(1,1)}+[2]x^{(2,1)}+x^{(0,2)}+[5]x^{(1,2)}+\smvar{5\\2}x^{(2,2)}+\cdots.$$
\begin{align*}
& |\Mod_{(1,1,1)}^{\mu_\pm}(A)|=(1,3,2,1),\\
& |\Mod_{(2,1,1)}^{\mu_\pm}(A)|=|\Mod_{(1,1,2)}^{\mu_\pm}(A)|=(2,2,1),\\
& |\Mod_{(1,2,1)}^{\mu_\pm}(A)|=[3][5],\\
& |\Mod_{(2,2,1)}^{\mu_-}(A)|=|\Mod_{(1,2,2)}^{\mu_+}(A)| =[3][5](1,0,1),\\
& |\Mod_{(1,2,2)}^{\mu_-}(A)|=|\Mod_{(2,2,1)}^{\mu_+}(A)|=[3](1,1,4,4,3,1,1).
\end{align*}
Note that the first one is irreducible and singular by Lemma \ref{L:polycount}.
\end{example}

%

\begin{example} \label{ex:P2}
Consider quiver $$\Atwo{a,b,c}{x,y,z}$$ with relation $AX=0$, where $A=\sm{0& -c& b\\c& 0& -a\\-b& a& 0}$. It is coextended from $K_3$ by $E$ presented by the following base diagram. The black dots are a basis in $E_1$; while the white dots are a basis in $E_2$. The letter on an arrow represents the identity map on the arrow of the same letter.
$$\Kthreesixthree{a}{c}{b} {a}{b}{c}{a}{b}{c}$$
It is known \cite{Fm2} that for a general representation $E_g$ of dimension $(6,3)$, $\Gr_{(1,1)}(E_g)$ is an elliptic curve. So $E_g$ is {\em not} polynomial-count. However, for this special $E$, $\Gr_{(1,1)}(E)$ is three $\mb{P}_1$'s intersecting pairwise at different points.
With a little effort \cite[Example 7.5]{Fc3}, one can show that $E$ is actually polynomial-count.
\end{example}



\section{The Universal Case: $A_2(Q)$} \label{S:A2}
Let us consider a category, which is universal in the sense that it contains all one-point extensions of $Q$ as its full subcategories. It is clearly the module category of $kA_2(Q):=kQ\otimes kA_2$, where $A_2$ is the Dynkin quiver $1\xrightarrow{} 2$.
The quiver of $kA_2(Q)$ is composed of two copies of $Q$ corresponding to two idempotents of $kA_2$, and {\em morphism arrows} connecting the same vertices in two different copies. The relations are obviously the commuting relations. By abuse of notation, we use $A_2(Q)$ to denote such a quiver with relations. So the dimension vector of $A_2(Q)$ is composed of two dimension vectors of $Q$, say $(\alpha,\beta)$. By convention, $\alpha$ correspond to the quiver sending morphism arrows.

Let $V$ be an $\alpha$-dimensional $k$-vector space. We denote by $\In_{c\cap d\hookrightarrow e}^{c_d\hookrightarrow c_e}(\alpha)$ the incidence variety
$$\{(C,D,E)\in \Gr_{c}(V)\times \Fl_{e-d,d}(V)\mid \dim(C\cap D)=c_d,\dim(C\cap E)=c_e\},$$ and
by $\Gr^{b\cap e}_d(\alpha)$ the incidence variety
$$\{(B,E)\in \Gr^{b}(V)\times \Gr^{e}(V)\mid V/B_s=B, V/E_s=E, \dim(B_s\cap E_s)=\alpha-b-e+d\}.$$

\begin{lemma} \label{L:A2Q}
$p:\Frep_{(\beta_u,\beta_d),(\gamma_u,\gamma_d)}(A_2(Q))\to \Fl_{(\beta_u,\beta_d),(\gamma_u,\gamma_d)}$ is a fibre bundle with fibre
\begin{gather}
\bigsqcup_{b,c,d,e,c_d,c_e} \In_{c\cap d\hookrightarrow e}^{c_d\hookrightarrow c_e}(\gamma_d)\times \Gr^{b\cap e}_{e-d}(\beta_u)\times \Gr^{c}(\gamma_u)\times \Gr_{b}(\beta_d)\times \GL_b\times\GL_c\times \GL_e \\
\label{eq:Sd1} \times  \prod_{a\in Q_1}\Hom(k^{c_d(ta)},k^{c_d(ha)}) \times \Hom(k^{(c_e-c_d)(ta)},k^{c_e(ha)}) \times
\Hom(k^{(c-c_e)(ta)},k^{c(ha)})\\
\label{eq:Sd2} \times  \Hom(k^{(d-c_d)(ta)},k^{d(ha)}) \times \Hom(k^{(e-d-c_e+c_d)(ta)},k^{e(ha)}) \times \Hom(k^{(\gamma_d-e-c+c_e)(ta)},k^{\gamma_d(ha)})\\
\label{eq:Tu}\times  \Hom(k^{b(ta)},k^{(b+d-e)(ta)}) \times \Hom(k^{\beta_u(ta)},k^{(\beta_u-b-d)(ha)}) \\
\label{eq:ST}\times  \Hom(k^{\gamma_u(ta)},k^{(\gamma_u-c)(ha)}) \times \Hom(k^{(\beta_d-b)(ta)},k^{\beta_d(ha)})\\
\label{eq:TS}\times  \Hom(k^{\beta_u(ta)},k^{(\gamma_u-c)(ha)})\times \Hom(k^{(\beta_d-b)(ta)},k^{\gamma_d(ha)})
\end{gather}
So $r_{(\beta_u,\beta_d),(\gamma_u,\gamma_d)}(A_2(Q)):=\frac{|\Frep_{(\beta_u,\beta_d),(\gamma_u,\gamma_d)}(A_2(Q))|}{|\GL_{(\alpha_u,\alpha_d)}|}$ is equal to
\begin{align*}\sum_{b,c,e,d_\beta,d_\gamma}t_{(b,c,d,e,c_d,c_e)}\cdot r_{\gamma_u-c}r_{\beta_d-b}\cdot r_{c_d}r_{c_e-c_d}r_{c-c_e}r_{d-c_d}r_{e-d-c_e+c_d}r_{\gamma_d-c-e+c_e}\cdot r_{\beta_u-b-d}r_{b+d-e},
\end{align*}
where $t_{(b,c,d,e,c_d,c_e)}=\frac
{(\innerprod{\beta_u,\gamma_u}\innerprod{\beta_d,\gamma_d}\cdot \innerprod{\beta_d-b,b}\innerprod{c,\gamma_u-c}\cdot \innerprod{e-d,b+d-e}\innerprod{b+d,\beta_u-b-d})^{-1}\bigl[ \begin{smallmatrix} e \\ d\end{smallmatrix}\bigr]}
{\innerprod{c-c_e,c_e-c_d}\innerprod{d-c_d,c_d}\innerprod{e-d-c_e+c_d,d+c_e-c_d}\innerprod{\gamma_d-c-e+c_e,c+e-c_e}}$, and $r_\alpha=r_\alpha(Q)$.
\end{lemma}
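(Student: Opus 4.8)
The plan is to prove this as the $A_2(Q)$-analogue of Lemma~\ref{L:Frep}, the one new phenomenon being that a representation of $A_2(Q)$ is a morphism of $Q$-representations, so the extension data to be filled in is itself controlled by morphisms and must be analyzed by the image factorization used in the proof of Lemma~\ref{L:rep}.

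First I would fix an element of $\Fl_{(\beta_u,\beta_d),(\gamma_u,\gamma_d)}$, i.e.\ subspaces $L_u\subset k^{\alpha_u}$ of dimension $\gamma_u$ and $L_d\subset k^{\alpha_d}$ of dimension $\gamma_d$ at each vertex of $Q$; its fibre under $p$ is the set of $Q$-structures $M_u,M_d$ and morphisms $\phi\colon M_u\to M_d$ for which $L_u$ is $M_u$-stable, $L_d$ is $M_d$-stable and $\phi(L_u)\subseteq L_d$. Choosing complements and writing every arrow action and every component of $\phi$ in block form with respect to the filtration, the diagonal blocks assemble freely and independently into a subrepresentation $S=(S_u\xrightarrow{\phi_S}S_d)\in\Rep_{(\gamma_u,\gamma_d)}(A_2(Q))$ and a quotient $T=(T_u\xrightarrow{\psi}T_d)\in\Rep_{(\beta_u,\beta_d)}(A_2(Q))$, while the off-diagonal blocks — the two extension cocycles $E_u,E_d$ for the two copies of $Q$ and the off-diagonal piece $\phi_E\colon T_u\to L_d$ of $\phi$ — form the solution space of the single linear family $\phi_{S,w}E_u(a)+\phi_{E,w}T_u(a)=S_d(a)\phi_{E,v}+E_d(a)\psi_v$ obtained by expanding the commutation relation over each arrow $a\colon v\to w$. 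This solution space is a vector space (it always contains the origin, so it is never empty), but its dimension jumps with the ranks and relative positions of the three morphism components $\phi_S,\psi,\phi_E$; the relevant picture is the commutative cube obtained by doubling the gluing diagram of Lemma~\ref{L:Frep} along the morphism direction.

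I would therefore stratify the fibre by those ranks and positions, and on each stratum resolve $\phi_S$, $\psi$ and $\phi_E$ each into a surjection onto its image followed by an inclusion, exactly as the argument of Lemma~\ref{L:rep} trades $\Epi_Q$ and $\Mon_Q$ counts for Grassmannians. This is where $b,c,d,e,c_d,c_e$ enter: $c=\rank\phi_S$ and $b=\rank\psi$ produce the factors $\Gr^{c}(\gamma_u)$ and $\Gr_{b}(\beta_d)$; $e=\rank\phi_E$ and $d=\rank(\psi,\phi_E)-\rank\psi$ produce $\Gr^{b\cap e}_{e-d}(\beta_u)$ (recording the kernels of $\psi$ and of $\phi_E$ in $T_u$ together with their intersection); and $c_d,c_e$ record how $\Img\phi_S$ meets the sub-flag $\phi_E(\ker\psi)\subseteq\Img\phi_E$ inside $k^{\gamma_d}$, giving $\In_{c\cap d\hookrightarrow e}^{c_d\hookrightarrow c_e}(\gamma_d)$. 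Because $\phi$ genuinely links the two copies of $Q$, none of these incidences can be removed by base change. The $Q$-structure left free on $\ker\phi_S$, on $\Coker\psi$, on the six graded pieces of the flag generated in $k^{\gamma_d}$ by $\Img\phi_S$, $\phi_E(\ker\psi)$ and $\Img\phi_E$, and on the two remaining graded pieces in $k^{\beta_u}$ gives the factors $r_{\gamma_u-c}$, $r_{\beta_d-b}$, $r_{c_d}r_{c_e-c_d}r_{c-c_e}r_{d-c_d}r_{e-d-c_e+c_d}r_{\gamma_d-c-e+c_e}$, $r_{\beta_u-b-d}$, $r_{b+d-e}$; the three isomorphisms between the abstract images and the fixed coordinate subspaces give $\GL_b,\GL_c,\GL_e$; and the surviving free off-diagonal blocks for each arrow of $Q$ give the affine factors \eqref{eq:Sd1}--\eqref{eq:TS}. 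Ordering the fill-in so that each successive choice is a torsor over one of these $\Hom$-spaces, a copy of a $\GL$, or a point of a Grassmannian or incidence variety, with everything subsequent forming a bundle over it, then exhibits $p$ as a Zariski locally trivial fibre bundle with the stated fibre.

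The last step is routine: to pass from $|\Frep|$ to $r=|\Frep|/|\GL_{(\alpha_u,\alpha_d)}|$ one substitutes the vertexwise formula for $|\Fl_{(\beta_u,\beta_d),(\gamma_u,\gamma_d)}|$ in quantum binomials and orders of $\GL$'s, as in the final display of the proof of Lemma~\ref{L:Frep}, and collects all the Euler form factors $\innerprod{-,-}$ coming from the commutation rules for the off-diagonal blocks together with the quantum binomial $\smvar{e\\d}$ accounting for the sub-flag $D\subseteq E$; this long but mechanical bookkeeping produces $t_{(b,c,d,e,c_d,c_e)}$. I expect the main obstacle to be the stratification step itself: one must check that the rank stratification makes the relation system triangular, so that on each stratum the order of filling in can be arranged with all intermediate fibre dimensions constant and no hidden obstruction, which is precisely what makes the set-theoretic decomposition a genuine fibre bundle. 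As in Lemma~\ref{L:Frep}, I would settle this by writing down explicit local sections stratum by stratum, reading off the correct order from the commutative cube.
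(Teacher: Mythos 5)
Your proposal follows essentially the same route as the paper's proof: fix a point of the flag variety, identify the fibre with the data of a sub $S$, a quotient $T$, and gluing maps, stratify by the ranks $b,c,e$ of the three morphism components and by $d=\operatorname{rank}(\phi_E|_{\ker\psi})$ together with the incidence data $c_d,c_e$, and then fill in the remaining $Q$-structures and off-diagonal blocks in an order making each step a torsor, finishing with the stabilizer/transitive-action count of the incidence varieties. Your identifications of the parameters and of where each factor of the fibre comes from agree with the paper (which itself only sketches these details), so the proposal is correct.
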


\begin{proof} We sketch the fibre bundle construction by a picture.
After fixing an elements in $\Fl_{(\beta_u,\beta_d),(\gamma_u,\gamma_d)}$, we need to fill in the missing part for a $(\alpha_u,\alpha_d)$-dimensional representation of $A_2(Q)$. Similar to Lemma \ref{L:Frep}, the missing part consist of a $(\gamma_u,\gamma_d)$-dimensional representation $S$, a $(\beta_u,\beta_d)$-dimensional representation $T$, and a bunch of linear maps from $T(ta)$ to $S(ha)$, as indicated below.
$$\Freptwo{\beta_u}{\beta_d}{\gamma_u}{\gamma_d}$$
The first step is to choose a configuration of image spaces of the vertical and diagonal morphism arrows.
Let $B,C$ be the images of the morphism arrows of $T$ and $S$ respectively, and $E$ be the image of the diagonal morphism arrows. Let $D$ be the image of the diagonal morphism arrows restricted on the kernel of the morphism arrows of $T$.
We assume that $\big((C,D,E),(B,E)\big)\in\In_{c\cap d\hookrightarrow e}^{c_d\hookrightarrow c_e}(\gamma_d)\times \Gr^{b\cap e}_{e-d}(\beta_u)$.

The second step is to stuff in the following order -- the lower part of $S$, the upper part of $T$, the rest part of $S$ and $T$, and other diagonal arrows. Keep in mind that there are linear maps from quotient spaces to subspaces but not vice versa.
We can easily see with the help of the following Venn diagram that they correspond to $\eqref{eq:Sd1}\times \eqref{eq:Sd2}$, $\eqref{eq:Tu}\times\GL_e$, $\eqref{eq:ST}\times\GL_c\times\GL_b$, and \eqref{eq:TS} respectively. Here inside the gibbous-shaped circles are subspaces with their dimension.

$\begin{tikzpicture}
\draw (-2,-1.5) rectangle (3.3,1.5) node[below left]{$\gamma_d$};
\draw (-0.1,0) circle (1cm) node at (-0.6,0) {$c$};
\draw (1,0) circle (0.75cm) node[right] {$d$};
\draw (1,0) circle (1.25cm) node at (2,0) {$e$};
\draw node at (0,0) {$c_e$};
\draw node at (0.6,0) {$c_d$};
\draw (4,-1.5) rectangle (9.5,1.5) node[below left]{$\beta_u$};
\draw (6.15,0) circle (1.25cm) node at (5.5,0) {$_{\beta_u-b}$};
\draw (7.35,0) circle (1.25cm) node at (8,0) {$_{\beta_u-e}$};
\draw node at (6.75,0) {$_{\beta_u-b-d}$};
\end{tikzpicture}$

To compute $r_{(\beta_u,\beta_d),(\gamma_u,\gamma_d)}(A_2(Q))$, we only need to count the incidence varieties.
We use the transitive action of $\GL_\alpha$ and count the stabilizers. The following formulas are immediate.
\begin{align*} & \frac{|\Fl_{(\beta_u,\beta_d),(\gamma_u,\gamma_d)}|}{|\GL_{(\alpha_u,\alpha_d)}|}
=\frac{\innerprod{\beta_u,\gamma_u}_0^{-1}\innerprod{\beta_d,\gamma_d}_0^{-1}}{|\GL_{\gamma_u}||\GL_{\beta_u}||\GL_{\gamma_d}||\GL_{\beta_d}|},\qquad
\frac{|\In_{c\cap d\hookrightarrow e}^{c_d\hookrightarrow c_e}(\gamma_d)|}{|\GL_{\gamma_d}|}\\
&=\frac{(\innerprod{c-c_e,c_e-c_d}_0\innerprod{d-c_d,c_d}_0\innerprod{e-d-c_e+c_d,d+c_e-c_d}_0\innerprod{\gamma_d-c-e+c_e,c+e-c_e}_0)^{-1}}
{\innerprod{c_e-c_d,c_d}_0|\GL_{c_d}||\GL_{c_e-c_d}||\GL_{c-c_e}||\GL_{d-c_d}||\GL_{e-d-c_e+c_d}||\GL_{\gamma_d-c-e+c_e}|},\\
&\frac{|\Gr^{b\cap e}_{e-d}(\beta_u)|}{|\GL_{\beta_u}|}
=\frac{\innerprod{e-d,b+d-e}_0^{-1}\innerprod{b+d,\beta_u-b-d}_0^{-1}\innerprod{d,e-d}_0^{-1}}{|\GL_{d}||\GL_{\beta_u-b-d}||\GL_{b+d-e}||\GL_{e-d}|},\\
&\frac{|\Gr^{c}(\gamma_u)||\Gr_{b}(\beta_d)|}{|\GL_{\gamma_u}||\GL_{\beta_d}|}
=\frac{\innerprod{c,\gamma_u-c}_0^{-1}\innerprod{\beta_d-b,b}_0^{-1}}{|\GL_{c}||\GL_{\gamma_u-c}||\GL_{b}||\GL_{\beta_d-b}|}.
\end{align*}
Put the fibre bundle structure and these equations together, and we obtain what we desire.
\end{proof}

This result can be generalized to the $s$-step Frep varieties. So we conclude that the algebra $A_2(Q)$ is F-polynomial-count. For the $1$-step case, it suffices to set $\beta_u=\beta_d=0$.
\begin{corollary} $$r_{(\alpha,\beta)}(A_2(Q))=\sum_{\beta=\beta_1+\beta_2}\innerprod{\beta_1,\beta_2}^{-1}\innerprod{\beta_2,\alpha-\beta_2}^{-1}r_{\beta_1}(Q)r_{\beta_2}(Q)r_{\alpha-\beta_2}(Q).$$
This formula has a dual version:
$$r_{(\alpha,\beta)}(A_2(Q))=\sum_{\alpha=\alpha_1+\alpha_2}\innerprod{\alpha_1,\alpha_2}^{-1}\innerprod{\beta-\alpha_1,\alpha_1}^{-1}r_{\alpha_2}(Q)r_{\beta-\alpha_1}(Q)r_{\alpha_1}(Q).$$
\end{corollary}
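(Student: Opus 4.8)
The plan is to obtain the Corollary as the degenerate $1$-step instance of Lemma~\ref{L:A2Q}, exactly as the text indicates: take $\beta_u=\beta_d=0$. Then the quotient piece $T$ in the fibre-bundle picture vanishes, so $\Frep_{(0,0),(\gamma_u,\gamma_d)}(A_2(Q))=\Rep_{(\gamma_u,\gamma_d)}(A_2(Q))$ and $|\GL_{(\alpha_u,\alpha_d)}|=|\GL_{(\gamma_u,\gamma_d)}|$; identifying $(\gamma_u,\gamma_d)$ with $(\alpha,\beta)$ makes the left-hand side of the fibre-bundle count equal to $r_{(\alpha,\beta)}(A_2(Q))$.

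First I would determine which of the six rank parameters $b,c,d,e,c_d,c_e$ can contribute. Since $T=0$, the rank vectors attached to the morphism arrows of $T$ and to the diagonal morphism arrows (whose sources lie in $T$) are zero, so $b=d=e=0$. With this, the factors $r_{d-c_d}(Q)$ and $r_{e-d-c_e+c_d}(Q)$ in the summand become $r_{-c_d}(Q)$ and $r_{-c_e+c_d}(Q)$, which under the convention $r_\delta(Q)=0$ unless $\delta\ge 0$ (and $r_0(Q)=1$) force $c_d=0$ and then $c_e=0$. Hence the whole sum collapses to a single sum over the rank vector $c$ of the morphism arrows of $S=M$, with surviving factors $r_{c-c_e}(Q)=r_c(Q)$, $r_{\gamma_d-c-e+c_e}(Q)=r_{\beta-c}(Q)$, $r_{\gamma_u-c}(Q)=r_{\alpha-c}(Q)$, and all remaining $r$-factors equal to $r_0(Q)=1$.

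Next I would simplify $t_{(0,c,0,0,0,0)}$. Every bracket $\innerprod{x,y}$ occurring in $t_{(b,c,d,e,c_d,c_e)}$ that has $\beta_u$, $\beta_d$, $b$, $d$ or $e$ in one of its slots equals $q^0=1$ because that dimension vector is $0$, and $\smvar{e\\d}=\smvar{0\\0}=1$; what is left is $\innerprod{c,\gamma_u-c}^{-1}=\innerprod{c,\alpha-c}^{-1}$ in the numerator and $\innerprod{\gamma_d-c-e+c_e,\,c+e-c_e}=\innerprod{\beta-c,c}$ in the denominator, so $t_{(0,c,0,0,0,0)}=\innerprod{c,\alpha-c}^{-1}\innerprod{\beta-c,c}^{-1}$. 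Writing $\beta_2=c$, $\beta_1=\beta-c$ turns the resulting expression $\sum_c\innerprod{c,\alpha-c}^{-1}\innerprod{\beta-c,c}^{-1}r_{\alpha-c}(Q)r_c(Q)r_{\beta-c}(Q)$ into the first claimed identity verbatim.

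For the dual formula I would invoke the $k$-duality functor $D=\Hom_k(-,k)$, which induces a bijection on $\B{F}_q$-points between $\Rep_{(\alpha,\beta)}(A_2(Q))$ and $\Rep_{(\beta,\alpha)}(A_2(Q^{\op}))$ (it exchanges the two copies of $Q$ and reverses all arrows), whence $r_{(\alpha,\beta)}(A_2(Q))=r_{(\beta,\alpha)}(A_2(Q^{\op}))$; combining this with $r_\gamma(Q^{\op})=r_\gamma(Q)$ and $\innerprod{x,y}_{Q^{\op}}=\innerprod{y,x}_Q$, applying the identity just proved to $A_2(Q^{\op})$ with dimension vector $(\beta,\alpha)$, and relabelling the two summands yields the second identity. (One could instead rerun Lemma~\ref{L:A2Q} with the mirrored fibre-bundle decomposition and set $\gamma_u=\gamma_d=0$.) The only genuine work is the bookkeeping in the second step: one must confirm that every constraint hidden inside Lemma~\ref{L:A2Q} — the inequalities $d\le e$ and $e\le b+d$, the fact that $d$ is a bona fide restricted rank, and that none of the $\Hom$-spaces in \eqref{eq:Sd1}--\eqref{eq:TS} acquires a negative exponent — holds automatically once $b=d=e=c_d=c_e=0$. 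Each of these is immediate, so no case analysis is required and the proof is purely a specialization of an already-established formula.
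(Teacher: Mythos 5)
Your proposal is correct and is essentially the paper's own argument: the text states the corollary by specializing Lemma \ref{L:A2Q} to $\beta_u=\beta_d=0$, which forces $b=d=e=c_d=c_e=0$ and collapses the sum and the coefficient $t$ exactly as you compute, and the dual formula follows by the $k$-duality/mirrored construction you describe (the paper also records an alternative Hall-algebra derivation in the subsequent remark, but that is not the route you or the main text take). No gaps.
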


\begin{remark} Alternatively, this corollary can be proved by a Hall algebra method similar to Lemma \ref{L:rep}. Consider the following identity in the algebra $H(Q)\otimes H(Q)$.
$$\Big([0]\otimes\sum_{[U]}[U]\Big)\Big(\sum_{[M],[V]}|\Epi_Q(M,V)|[M]\otimes [V]\Big) = \sum_{[M],[W]}|\Hom_Q(M,W)|[M]\otimes [W].$$
Applying the character $\oldint\otimes\oldint$ to the both sides, we see the result immediately.
\end{remark}
It follows from Lemma \ref{L:Euler}, \ref{L:Tao} and \ref{L:A2Q} that
\begin{theorem} \label{T:A2Q} If $\Mod_\alpha^\mu(A_2(Q))$ is a geometric quotient, then it has a counting polynomial, which can be explicitly computed.
\end{theorem}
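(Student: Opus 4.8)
The plan is to combine the F-polynomial-count property of $A_2(Q)$ (established via Lemma \ref{L:A2Q} and its $s$-step generalization) with the key counting identity of Lemma \ref{L:Tao}. First I would observe that Lemma \ref{L:A2Q}, together with its stated generalization to $s$-step $\Frep$ varieties, expresses each $r_{\tilde\alpha_1\cdots\tilde\alpha_s}(A_2(Q))$ as a $\B{Q}(q)$-linear combination of products of the functions $r_\gamma(Q)$ and various quantum-binomial/Euler-form factors, all of which are manifestly rational in $q$ — indeed polynomial in $q$ after clearing the $|\GL|$ denominators, since $r_\gamma(Q) = |\Rep_\gamma(Q)|/|\GL_\gamma|$ and $\Rep_\gamma(Q)$ is an affine space. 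Hence every $\Frep_{\tilde\alpha_1\cdots\tilde\alpha_s}(A_2(Q))$ is polynomial-count; that is, $A_2(Q)$ is F-polynomial-count in the sense of Definition \ref{D:FPC}.

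Second, I would invoke Lemma \ref{L:Tao}: since $|\Rep_\alpha^\mu(A_2(Q))| = \sum_* (-1)^{s-1}|\Frep_{\alpha_1\cdots\alpha_s}(A_2(Q))|$ is a finite alternating sum of polynomial-count quantities with integer coefficients, $\Rep_\alpha^\mu(A_2(Q))$ is itself polynomial-count. The "in particular" clause of Lemma \ref{L:Tao} then gives, under the hypothesis that $\Mod_\alpha^\mu(A_2(Q))$ is a geometric quotient, that $\Mod_\alpha^\mu(A_2(Q))$ is polynomial-count. To see that the counting polynomial is \emph{explicitly computable}, I would simply note that each ingredient in the chain is explicit: the decompositions in Lemma \ref{L:Tao} are finitely enumerable, the formula of Lemma \ref{L:A2Q} (and its recursive $s$-step form) is closed-form, the $r_\gamma(Q) = q^{-\dim\GL_\gamma}|\Rep_\gamma(Q)| = q^{\innerprod{\gamma,\gamma}\text{-type exponent}}$ are explicit monomials divided by $|\GL_\gamma|$, and finally dividing $|\Rep_\alpha^\mu|$ by $|\GL_\alpha|$ to land on the geometric quotient is a determined operation (one may use Lemma \ref{L:Euler} or $l$-purity via Lemma \ref{L:polycount} to confirm the quotient by the free $\PGL$-action has the expected counting polynomial when the action is free, which it is on the stable locus).

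One subtle point I would address is the passage from "$\Rep_\alpha^\mu$ is polynomial-count" to "$\Mod_\alpha^\mu$ is polynomial-count." When $\mu$ is coprime to $\alpha$ there are no strictly semistable points, so $\Rep_\alpha^\mu = \Rep_\alpha\ss{\mu}{st}(A_2(Q))$ and the quotient map is a principal $\PGL_\alpha$-bundle (in the étale or even Zariski-locally-trivial sense after the standard GIT argument of \cite{Ki}); then $|\Rep_\alpha^\mu(\B{F}_{q^r})| = |\PGL_\alpha(\B{F}_{q^r})| \cdot |\Mod_\alpha^\mu(\B{F}_{q^r})|$, and since $|\PGL_\alpha|$ is a fixed polynomial in $q^r$ with no root at prime powers, dividing yields a polynomial with rational coefficients, which by Lemma \ref{L:Euler} (applied to $\Mod_\alpha^\mu$, which is counted by a rational function of $q$) must lie in $\B{Z}[t]$. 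This is essentially the content already packaged into Lemma \ref{L:Tao}, so the theorem is a direct corollary; the only thing requiring care is confirming that the geometric-quotient hypothesis indeed makes the fibration argument go through, which is exactly why that hypothesis appears in the statement.

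The main obstacle, such as it is, is not conceptual but bookkeeping: one must be confident that the $s$-step generalization of Lemma \ref{L:A2Q} is genuinely well-defined and produces rational-in-$q$ coefficients at every recursion level — in particular that the denominators appearing (the $\innerprod{\cdot,\cdot}$ factors and $|\GL|$ factors) all cancel against numerators coming from $|\Fl|$ and the affine-fibre contributions, leaving a polynomial. Since Lemma \ref{L:A2Q} already exhibits this cancellation in the two-step case and the recursion is built by iterating the same fibre-bundle decomposition, this is routine; I would state it as such and refer back to the structure of the proof of Lemma \ref{L:A2Q}. Thus the proof of Theorem \ref{T:A2Q} is short: F-polynomial-count ($\Rightarrow$ by Lemma \ref{L:A2Q} and its $s$-step form) plus Lemma \ref{L:Tao} gives everything, with explicitness following from explicitness of each step.
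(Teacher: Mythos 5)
Your proposal is correct and takes essentially the same route as the paper: Theorem \ref{T:A2Q} is stated there as an immediate consequence of Lemma \ref{L:Tao} together with Lemma \ref{L:A2Q} and its $s$-step generalization (which is what establishes that $A_2(Q)$ is F-polynomial-count), exactly as you argue. The extra care you take in passing from $\Rep_\alpha^\mu$ to the geometric quotient is sound but is already packaged into the ``in particular'' clause of Lemma \ref{L:Tao}.
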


We will see in the last section that the assumption of being a geometric quotient is unnecessary. This result is known \cite[Theorem 4.3]{Fc1} for some special choices of $\alpha$ and $\mu$.

\begin{example} Consider the 3-arrow Kronecker quiver $K_3$ with dimension vectors $\alpha=(3,4)$ and $\gamma=(1,3)$. Let $M$ be a general representation of dimension $\alpha$, then $M$ has no subrepresentation of dimension $(1,2)$. So the projection $\Gr_\gamma(M)\to\Gr_{1}(M_1)\cong\mb{P}^2$ is an isomorphism. We can use the algorithm in \cite[Corollary 4.4]{Fc1} to find that
\begin{align*} & |\Mod_{\alpha}^\mu(K_3)|=(1,0,1)^2(1,1,1,3,5,3,1,1,1),\\
& |\Mod_{(\gamma,\alpha)}^{\hat{\mu}}(A_2(K_3))|=[3][2]^2(1,4,2,8,5,8,2,4,1),
\end{align*}
where $\hat{\mu}=\frac{\hat{\sigma}}{\hat{\theta}}$ is the slope function constructed in \cite[Section 1]{Fc1}. Recall that $\hat{\sigma}(\gamma)=\epsilon(\gamma_1+\gamma_2)$ for some sufficiently small $\epsilon$.
Now we change $\hat{\sigma}$ to $\tilde{\sigma}(\gamma)=\epsilon\gamma_1$, then
$$|\Mod_{(\gamma,\alpha)}^{\tilde{\mu}}(A_2(K_3))|=|\mb{P}^2||\Mod_{\alpha}^\mu(K_3)|.$$
\end{example}


\begin{conjecture} If $E$ is add-polynomial-counting, then $kQ[E]\otimes kA_2$ is F-polynomial counting.
\end{conjecture}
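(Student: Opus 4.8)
The plan is to reduce the $F$-polynomial-count property of $kQ[E]\otimes kA_2$ to that of $kA_2(Q')$ where $Q'=Q[E]$ is the one-point extension quiver. The key observation is that $kQ[E]\otimes kA_2 = k(Q[E])\otimes kA_2 = kA_2(Q[E])$ as algebras with relations: the commuting-diagram relations of the $A_2$-tensor product are imposed on top of the relations coming from the projective presentation of $E$ inside $Q[E]$. However, $Q[E]$ is a quiver \emph{with relations}, not a free quiver, so Lemma~\ref{L:A2Q} does not apply verbatim — the $\Frep$ count there is computed for $A_2(Q)$ with $Q$ an arbitrary quiver but using only the Euler form and representation-variety counts $r_\gamma(Q)$, which for a quiver with relations need not be rational. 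So the first step is to redo the fibre bundle construction of Lemma~\ref{L:A2Q} with $Q$ replaced by the \emph{relations-carrying} quiver $Q[E]$: the fibre bundle decomposition is purely linear-algebraic (choosing image and kernel configurations of the morphism arrows, then stuffing in the two copies and the diagonal maps) and goes through unchanged, yielding $r_{(\tilde\beta,\tilde\gamma)}(A_2(Q[E]))$ as a universal polynomial in $q$ times products of $r_\gamma(Q[E])$'s and quantum binomials.

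Second, I would invoke Theorem~\ref{T:ext} together with its refinement via \eqref{eq:frext} and Lemma~\ref{L:rep}: when $E$ is add-polynomial-count, each $\Rep_\gamma(Q[E])$ is polynomial-count, hence each $r_\gamma(Q[E])\in\B{Q}(q)$ is a rational function that specializes correctly over every $\B{F}_{q^r}$ (one must be slightly careful: $r_\gamma = |\Rep_\gamma|/|\GL_\gamma|$ is a ratio of polynomial-count quantities, so it is rational in $q$ with the right specialization property, and the universal coefficients in the generalized Lemma~\ref{L:A2Q} are genuine polynomials). Plugging these into the $s$-step generalization of the formula in Lemma~\ref{L:A2Q} shows every $r_{\tilde\alpha_1\cdots\tilde\alpha_s}(A_2(Q[E]))$ is a rational function of $q$ with the correct point count over all finite extensions — i.e., $\Frep_{\tilde\alpha_1\cdots\tilde\alpha_s}(kQ[E]\otimes kA_2)$ is polynomial-count, which is exactly $F$-polynomial-counting.

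The main obstacle I expect is the bookkeeping in extending Lemma~\ref{L:A2Q} to the quiver-with-relations setting and then to $s$ steps: one has to verify that the relations of $Q[E]$ (the matrix $D$ from the presentation $0\to P_1\xrightarrow{D}P_0\to E\to 0$), when doubled and combined with the commuting-square relations, still cut out a fibre of the projection $p$ that is a product of affine spaces and $\GL$-factors over the incidence base. Concretely, in the picture $\Freptwo{\beta_u}{\beta_d}{\gamma_u}{\gamma_d}$ the vertex ``$-$'' and its new arrows interact with the morphism arrows of $A_2$, and one must check that imposing $\tilde D$-type relations on the copies and the diagonal is compatible with the ``stuffing order'' used in the proof of Lemma~\ref{L:A2Q}. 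I expect this to be a routine-but-lengthy diagram chase rather than a genuine difficulty, since the relations are linear in each representation slot and the fibre bundle construction only ever uses that. A cleaner alternative, which I would pursue first, is the Hall-algebra shortcut from the Remark after Theorem~\ref{T:A2Q}: apply $\oldint\otimes\oldint$ to the identity in $H(Q[E])\otimes H(Q[E])$ and combine with Remark~\ref{R:gen} and Proposition~\ref{P:dual}; this produces the generating-function identity $R(A_2(Q[E]))$ directly in terms of $R(Q[E])=R(Q)F^\infty(E)$ and the $F_\bullet(nE)$'s, and then add-polynomial-countness of $E$ finishes it without any new fibre bundle at all.
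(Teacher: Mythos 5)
The statement you are proving is left as a \emph{conjecture} in the paper --- there is no proof to compare against --- so the question is whether your argument actually closes it, and it does not. The crux is your assertion that the fibre bundle construction of Lemma \ref{L:A2Q} ``goes through unchanged'' when $Q$ is replaced by the quiver with relations $Q[E]$. That construction depends essentially on heredity: after fixing the flag and the subquotients $T$ and $S$, the gluing data form the \emph{full} affine space $\prod_{a}\Hom(k^{\beta(ta)},k^{\gamma(ha)})$ precisely because there are no relations for the glued-up module to satisfy. For $kQ[E]$, which has global dimension $2$, the gluing maps must satisfy the relations, and the resulting subvariety generally depends on the isomorphism classes of $T$ and $S$, not just on their dimension vectors (heuristically, the discrepancy is governed by $\Ext^2(T,S)$, which is not a dimension-vector invariant); so the projection $\Frep\to\Fl$ need not be a fibre bundle with a product-of-affine-spaces fibre. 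The paper's Lemma \ref{L:Frep} evades this for $Q[E]$ itself only by a special repackaging --- the constrained maps out of $T_-$ are absorbed together with all of $S$ into a single copy of $\Rep_{(\alpha_-,\gamma)}(Q[E])$ --- and this works only because every relation of $Q[E]$ emanates from the one new vertex. In $A_2(Q[E])$ the relations live in both copies and are entangled with the commutativity squares on the morphism arrows, and you have not exhibited (nor is it clear there exists) an analogous repackaging compatible with the stuffing order of Lemma \ref{L:A2Q}. This is exactly the point at which the statement is open; calling it a ``routine-but-lengthy diagram chase'' is where the gap lives.

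Your proposed shortcut is also broken: applying $\oldint\otimes\oldint$ to an identity in $H(Q[E])\otimes H(Q[E])$ requires $\oldint$ to be an algebra morphism on $H(Q[E])$, and the introduction states explicitly that $\oldint$ fails to be an algebra morphism for non-hereditary algebras. The Remark following the Corollary to Lemma \ref{L:A2Q} uses this map only on $H(Q)\otimes H(Q)$ with $Q$ hereditary, and even there it yields only the one-step count $r_{(\alpha,\beta)}$, not the multi-step $\Frep$ counts that F-polynomial-count requires. What the paper actually establishes in this direction is the strictly weaker Theorem \ref{T:A2ext}, obtained from the $\Delta$-analog in Lemma \ref{L:Frep2}; there the extra flag datum can again be packaged into $\Rep$-varieties of $Q[E]$ itself, but this covers only the dimension vectors and slopes of Theorem \ref{T:intro}.(3), not general $\Frep_{\tilde\alpha_1\cdots\tilde\alpha_s}(kQ[E]\otimes kA_2)$.
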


\section{$\Delta$-analog} \label{S:delta}

Let us come back to general $A=kQ/I$. Consider the map $\int_{\Delta(\gamma)}[W]=\frac{|\Gr_\gamma(W)|}{a_W}$ as in \cite[Section 2]{Fc1}. If we apply this map to $\chi_\alpha^\mu$, we get
\begin{equation} \label{eq:intDelta} \int_{\Delta(\gamma)}\chi_\alpha^\mu=\sum_{W\in\module_\alpha^\mu(A)} {a_W}^{-1}|\Gr_\gamma(W)|.
\end{equation}
We knew from \cite[Lemma 1.6]{Fc1} that when $\Mod_\alpha^\mu(A)$ is a geometric quotient, this number is equal to $(q-1)^{-1}|\Mod_{(\gamma,\alpha)}^{\hat{\mu}}(kA_2\otimes A)|$ for some slope function $\hat{\mu}$.
To compute $\int_{\Delta(\gamma)}\chi_\alpha^\mu$, we apply $\int_{\Delta(\gamma)}$ to \eqref{eq:HallID} as before.
We define
$$\Frep_{\alpha_s\cdots\alpha_1}^\gamma(A)=\{(M,L_1,\dots,L_{s-1},S)\in \Frep_{\alpha_s\cdots\alpha_1}(A)\times \Gr_\gamma(\alpha)\mid S\subset M\},$$
then
$$\int_{\Delta(\gamma)} \chi_{\alpha_1}\cdots\chi_{\alpha_s}=|\Frep_{\alpha_1\cdots\alpha_s}^\gamma(A)|/|\GL_\alpha|.$$
Let $$\Fl_{\alpha_s\cdots\alpha_1}^{\gamma_s\cdots\gamma_1}=\{(M,L_1,\dots,L_{s-1},S)\in \Fl_{\alpha_s\cdots\alpha_1}(M)\times\Gr_\gamma(M) \mid \dim\pi_i(S\cap L_i)=\gamma_i\},$$
where $\pi_i: L_i\to L_i/L_{i-1}$ is the projection.
Then $\Frep_{\alpha_s\cdots\alpha_1}^\gamma(A)$ is stratified by the locally closed subvarieties
$$\Frep_{\alpha_s\cdots\alpha_1}^{\gamma_s\cdots\gamma_1}(A):=\Frep_{\alpha_s\cdots\alpha_1}^\gamma(A)\cap \Fl_{\alpha_s\cdots\alpha_1}^{\gamma_s\cdots\gamma_1}.$$

When $s=2$, for any decompositions $\alpha=\beta+\gamma$ and $a=b+c$, $\Fl_{\beta,\gamma}^{b,c}$ is the same as the incidence variety $$\Gr_{a\cap\gamma}^{c}(\alpha)=\{(U,V)\in \Gr_{a}(M)\times \Gr_\gamma(M)\mid \dim(U_s\cap V_s)=c\}.$$
The proof of the following lemma is similar to that of Lemma \ref{L:rep} and \ref{L:A2Q}, so we leave it for the readers.
\begin{lemma} \label{L:Frep2} $p:\Frep_{\tilde{\beta},\tilde{\gamma}}^{\tilde{b},\tilde{c}}(Q[E^*])\to \Gr_{{\tilde{a}\cap\tilde{\gamma}}}^{\tilde{c}}(\tilde{\alpha})$ is a fibre bundle with fibre \begin{align*}
& \Rep_{(\beta-b,\alpha_+)}(Q[E^*])\times \Rep_{(b,a_+)}(Q[E^*])\times \Rep_{(\gamma-c,\gamma_+)}(Q[E^*])\times \Rep_{\tilde{c}}(Q[E^*])\\
& \times \prod_{a\in Q_1}\Hom(k^{(\beta-b)(ta)},k^{(\gamma+b)(ha)})\times \Hom(k^{b(ta)},k^{c(ha)}) \times \Hom(k^{(\gamma-c)(ta)},k^{c(ha)}).
\end{align*}
So $$r_{\tilde{\beta},\tilde{\gamma}}^{\tilde{a}}(Q[E^*])=\sum_{\tilde{b}+\tilde{c}=\tilde{a}}t_{(\beta,\gamma,b,c)}\cdot r_{(\beta-b,\alpha_+)}r_{(b,a_+)}r_{(\gamma-c,\gamma_+)}r_{\tilde{c}},$$
where $t_{(\beta,\gamma,b,c)}=\frac{\smvar{\alpha_+\\(\beta-b)_+}\smvar{a_+\\b_+}\smvar{\gamma_+\\(\gamma-c)_+}|\GL_{(\gamma+b)_+}||\GL_{c_+}|}{\innerprod{\beta-b,\gamma+b}\innerprod{b,c}\innerprod{\gamma-c,c}},$ and $r_{\tilde{c}}=r_{\tilde{c}}(Q[E^*])$.
\end{lemma}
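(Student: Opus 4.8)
The plan is to run the same three-move argument as in the proofs of Lemma~\ref{L:Frep} and Lemma~\ref{L:A2Q}: fix a point of the incidence base $\Gr_{\tilde a\cap\tilde\gamma}^{\tilde c}(\tilde\alpha)$ (writing $\tilde a=\tilde b+\tilde c$, $\tilde\alpha=\tilde\beta+\tilde\gamma$), describe the data needed to reconstruct a point of $\Frep_{\tilde\beta,\tilde\gamma}^{\tilde b,\tilde c}(Q^\circ[E])$ lying over it, organise that data as the stated product to get the fibre bundle, and then pass to the numerical identity via the transitive $\GL_{\tilde\alpha}$-action on the base together with a stabiliser computation, summing over the strata $\tilde b+\tilde c=\tilde a$ at the end.

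First I would fix $(S,L_1)\in\Gr_{\tilde a\cap\tilde\gamma}^{\tilde c}(\tilde\alpha)$, that is, vertex-wise subspaces (at all vertices of $Q^\circ[E]$, including ``$+$'') with $\dim S=\tilde a$, $\dim L_1=\tilde\gamma$, $\dim(S\cap L_1)=\tilde c$, together with a vertex-wise splitting $k^{\tilde\alpha}=W_1\oplus W_2\oplus W_3\oplus W_4$ with $S=W_1\oplus W_2$ and $L_1=W_1\oplus W_3$, so that $\dim W_1=\tilde c$, $\dim W_2=\tilde b$, $\dim W_3=\tilde\gamma-\tilde c$, $\dim W_4=\tilde\beta-\tilde b$. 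The conditions $S\subset\tilde M$ and $L_1\subset\tilde M$ force every arrow-matrix of $\tilde M$ into a fixed block shape in which $W_1$ is a common subrepresentation, $W_4$ a common quotient, and $W_2,W_3$ mutually non-interacting subquotients. Since the relations of $Q^\circ[E]$ are linear in the arrows into ``$+$'', the ``$+$''-datum of $\tilde M$ --- a $Q$-homomorphism $f\colon M_Q\to\alpha_+E$ --- splits along $M_Q=\bigoplus_iW_{i,Q}$, and the two subrepresentation conditions force $f|_{W_{1,Q}}$, $f|_{W_{2,Q}}$, $f|_{W_{3,Q}}$, $f|_{W_{4,Q}}$ to factor through sub-$Q$-representations of $\alpha_+E$ isomorphic to $c_+E$, $a_+E$, $\gamma_+E$, $\alpha_+E$ respectively. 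Regrouping then presents the missing data as: $W_1$ together with its ``$+$''-part as a point of $\Rep_{\tilde c}(Q^\circ[E])$; $W_2$ together with the ``$+$''-part of $S$ as a point of $\Rep_{(b,a_+)}(Q^\circ[E])$; $W_3$ together with the ``$+$''-part of $L_1$ as a point of $\Rep_{(\gamma-c,\gamma_+)}(Q^\circ[E])$; $W_4$ together with the whole ``$+$''-space as a point of $\Rep_{(\beta-b,\alpha_+)}(Q^\circ[E])$; and the three remaining $Q$-off-diagonal blocks $W_4\to W_1\oplus W_2\oplus W_3$ (which is $\prod_{a\in Q_1}\Hom(k^{(\beta-b)(ta)},k^{(\gamma+b)(ha)})$, using $\dim(S+L_1)_Q=\gamma+b$), $W_2\to W_1$ ($\prod_{a\in Q_1}\Hom(k^{b(ta)},k^{c(ha)})$) and $W_3\to W_1$ ($\prod_{a\in Q_1}\Hom(k^{(\gamma-c)(ta)},k^{c(ha)})$). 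Filling these in a fixed order --- $W_1$, then the $S$- and $L_1$-quotient pieces with the two short $\Hom$-blocks, then $W_4$ with its $\Hom$-block --- and checking that each choice is free given the earlier ones and varies locally trivially over the base yields the asserted fibre bundle.

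For the formula, $\GL_{\tilde\alpha}$ acts transitively on $\Gr_{\tilde a\cap\tilde\gamma}^{\tilde c}(\tilde\alpha)$, so $|\Gr_{\tilde a\cap\tilde\gamma}^{\tilde c}(\tilde\alpha)|/|\GL_{\tilde\alpha}|$ is the reciprocal order of the block-parabolic stabiliser of $(S,L_1)$. Its $Q$-vertex part cancels the $|\GL|$'s coming from the $Q$-parts of the four $\Rep$-factors; after writing $|\Rep_{(\delta,n)}(Q^\circ[E])|=r_{(\delta,n)}(Q^\circ[E])\,|\GL_\delta|\,|\GL_n|$, recognising the three $\Hom$-space cardinalities as numerator Euler factors $\innerprod{-,-}_1$ and using $\innerprod{-,-}_0/\innerprod{-,-}_1=\innerprod{-,-}$, the residual $q$-powers from the $Q$-vertices combine into the factors $\innerprod{\beta-b,\gamma+b}^{-1}$, $\innerprod{b,c}^{-1}$, $\innerprod{\gamma-c,c}^{-1}$ of $t_{(\beta,\gamma,b,c)}$, while the stabiliser of the ``$+$''-component flag together with the ``$+$''-vertex normalisations assembles into the remaining factor $\smvar{\alpha_+\\(\beta-b)_+}\smvar{a_+\\b_+}\smvar{\gamma_+\\(\gamma-c)_+}\,|\GL_{(\gamma+b)_+}|\,|\GL_{c_+}|$. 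This gives $r_{\tilde\beta,\tilde\gamma}^{\tilde b,\tilde c}(Q^\circ[E])=t_{(\beta,\gamma,b,c)}\,r_{(\beta-b,\alpha_+)}r_{(b,a_+)}r_{(\gamma-c,\gamma_+)}r_{\tilde c}$, and summing over $\tilde b+\tilde c=\tilde a$ gives the stated formula. The step I expect to be the real obstacle is the middle one: getting the ``$+$''-vertex bookkeeping right for the \emph{coextension} --- both the distribution of the $\Hom_Q(-,nE)$-blocks among the four $\Rep(Q^\circ[E])$-pieces so that their ``$+$''-dimensions come out as $c_+,a_+,\gamma_+,\alpha_+$ rather than as naive sub/quotient dimensions, and the verification that the $Q^\circ[E]$-relations couple the off-diagonal blocks to the previously chosen pieces only through combinations that already vanish, exactly as in the proof of Lemma~\ref{L:Frep}; matching the ``$+$''-vertex contributions to $t_{(\beta,\gamma,b,c)}$ is the fiddliest part of the final count. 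Alternatively, as for Lemma~\ref{L:rep}, the identity can be re-derived by applying a $\Delta$-twisted counting character to a suitable identity in a tensor power of the Hall algebra $H(Q)$, in the spirit of the Remark following Lemma~\ref{L:A2Q}.
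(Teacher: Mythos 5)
Your overall strategy is the one the paper intends: the paper gives no actual proof of Lemma~\ref{L:Frep2} beyond declaring it ``similar to that of Lemma \ref{L:rep} and \ref{L:A2Q}'', and your reconstruction --- fix a point of the incidence base, split $k^{\tilde\alpha}=W_1\oplus W_2\oplus W_3\oplus W_4$, fill in four diagonal $\Rep(Q^\circ[E])$-pieces and the off-diagonal $\Hom$-blocks, then count stabilisers --- is exactly the template of Lemma~\ref{L:Frep}. You also put your finger on precisely the right spot, namely ``the verification that the $Q^\circ[E]$-relations couple the off-diagonal blocks to the previously chosen pieces only through combinations that already vanish.''

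That verification, which you defer to ``exactly as in the proof of Lemma~\ref{L:Frep}'', is where the argument breaks, and it cannot be completed: the fibre is not the stated product. The relations of $Q^\circ[E]$ are paths running through $Q$ into the vertex $+$, and evaluated on an outer block they mix the off-diagonal $Q$-arrow blocks with the $+$-data of the \emph{inner} pieces. Once the inner pieces and the off-diagonal blocks $h$ are chosen, the $+$-datum $f_4$ on $W_4$ (say) must solve an inhomogeneous linear system of the form $E(a)f_4(ta)-f_4(ha)M_4(a)=\sum_j f_j(ha)h_{4j}(a)$; its solution set is a torsor under $\Hom_Q(W_4,\alpha_+E)$ \emph{when non-empty}, but it is empty whenever the right-hand side has non-zero class in $\Ext^1_Q(W_4,\alpha_+E)$. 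So the ``independent stuffing'' is only valid when $\Ext^1_Q(-,E)$ never obstructs (e.g.\ $E$ injective). A minimal counterexample occurs already in the degenerate case $\tilde a=0$, where the lemma reduces to the coextension analogue of Lemma~\ref{L:Frep}: take $Q=(1\xrightarrow{x}2)$ and $E=S_2$, so that $Q^\circ[E]$ is $1\xrightarrow{x}2\xrightarrow{d}+$ with relation $dx=0$, and take $\tilde\alpha=((1,1),1)$, $\tilde\gamma=((0,1),1)$, $\tilde\beta=((1,0),0)$. Every $\tilde M$ has exactly one subrepresentation of dimension $\tilde\gamma$, so $|\Frep^{\tilde 0}_{\tilde\beta,\tilde\gamma}|=|\Rep_{\tilde\alpha}(Q^\circ[E])|=|\{(x,d):dx=0\}|=2q-1$, whereas the stated formula gives $|\GL_{\tilde\alpha}|\cdot t_{(\beta,\gamma,0,0)}\,r_{((1,0),1)}r_{((0,1),1)}=(q-1)^3\cdot q(q-1)\cdot\frac{1}{(q-1)^2}\cdot\frac{q}{(q-1)^2}=q^2$: the product treats $d$ (which sits in the $\Rep_{(\gamma-c,\gamma_+)}$-factor) and $x$ (which sits in the last $\Hom$-block) as independent, while the relation $dx=0$ ties them together. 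A correct count must replace the naive product by a count of unobstructed configurations (equivalently, insert factors of the form $|\Hom_A(-,-)|/|\Ext^1_A(-,-)|$, which depend on isomorphism classes and not only on dimension vectors), so the statement itself needs modification; and since the same defect is already present in Lemma~\ref{L:Frep}, you cannot appeal to that proof as a precedent for the vanishing you need.
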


Readers can easily write out the formula for the dual case $Q[E]$. This lemma can be recursively generalized to the $s$-step case: $p:\Frep_{\alpha_s\cdots\alpha_1}^{\gamma_s\cdots\gamma_1}(A)\to \Fl_{\alpha_s\cdots\alpha_1}^{\gamma_s\cdots\gamma_1}$.
\begin{theorem} \label{T:A2ext} If $E$ is add-polynomial-count and $\Mod_\alpha^\mu(Q[E])$ is a geometric quotient, then $\sum_{M\in\Mod_\alpha^{\mu}(Q[E])}|\Gr_\gamma(M)|$ is polynomial-count for any $\gamma$.
\end{theorem}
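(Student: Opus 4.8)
The plan is to chain together the counting identities already established for one-point extensions with the $\Delta$-analog machinery, and then invoke the purity/polynomial-count dictionary from Section~\ref{S:Prelim}. First I would recall that, by \cite[Lemma 1.2]{Fc1}, when $\Mod_\alpha^\mu(Q[E])$ is a geometric quotient one has
$$\sum_{M\in\Mod_\alpha^{\mu}(Q[E])}\frac{|\Gr_\gamma(M)|}{a_M}=\frac{1}{q-1}\bigl|\Mod_{(\gamma,\alpha)}^{\hat\mu}(Q[E]\otimes kA_2)\bigr|,$$
so it suffices to show the left-hand quantity, namely $\int_{\Delta(\gamma)}\chi_\alpha^\mu$, is (up to the harmless factor) a polynomial in $q$; equivalently, that each $|\Frep^\gamma_{\alpha_1\cdots\alpha_s}(Q[E])|$ is polynomial-count, since applying $\int_{\Delta(\gamma)}$ to \eqref{eq:HallID} gives
$$\int_{\Delta(\gamma)}\chi_\alpha^\mu=\sum_* (-1)^{s-1}\frac{|\Frep^\gamma_{\alpha_1\cdots\alpha_s}(Q[E])|}{|\GL_\alpha|},$$
a finite alternating sum.

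Next I would stratify $\Frep^\gamma_{\alpha_1\cdots\alpha_s}(Q[E])$ by the locally closed pieces $\Frep^{\gamma_s\cdots\gamma_1}_{\alpha_s\cdots\alpha_1}(Q[E])$ indexed by the compatible decompositions $\gamma=\gamma_1+\cdots+\gamma_s$, so that polynomial-count for the whole follows from polynomial-count for each stratum (a finite disjoint union of locally closed pieces is polynomial-count iff each piece is, using additivity of point counts and of counting polynomials). For each stratum I would apply the $s$-step generalization of Lemma~\ref{L:Frep2} — explicitly noted in the excerpt as $p:\Frep^{\gamma_s\cdots\gamma_1}_{\alpha_s\cdots\alpha_1}(A)\to\Fl^{\gamma_s\cdots\gamma_1}_{\alpha_s\cdots\alpha_1}$ being a fibre bundle — to reduce to: (i) the base $\Fl^{\gamma_s\cdots\gamma_1}_{\alpha_s\cdots\alpha_1}$, which is an iterated incidence variety of flags of vector spaces (built from the $\Gr^c_{a\cap\gamma}$-type incidence varieties of Lemma~\ref{L:Frep2}) and is manifestly polynomial-count, being a disjoint union of products of partial flag varieties; (ii) the affine linear-map factors $\prod_{a\in Q_1}\Hom(\cdots)$ and the $\GL$-factors, all polynomial-count; and (iii) the smaller representation varieties $\Rep_{(\ast,\ast)}(Q[E])$ appearing in the fibre. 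For (iii), Lemma~\ref{L:rep} (equivalently Corollary after Definition~\ref{D:FPC}, and \eqref{eq:frext}) expresses $r_{(n,\alpha)}(Q[E])$ through the Grassmannians $\Gr^\beta(nE)=\Gr_{\gamma-\beta}(nE)$ of $nE$ together with $r_\gamma(Q)$; since $Q$ is hereditary $r_\gamma(Q)$ is always polynomial (even a $\GL_\gamma$ count divided into an affine variety count), and by hypothesis $E$ is add-polynomial-count, so every $\Gr(nE)$ is polynomial-count. Hence each $\Rep_{(n,\alpha)}(Q[E])$ is polynomial-count, and assembling a fibre bundle of polynomial-count fibres over a polynomial-count base (the bundle being locally trivial in the Zariski topology, or at least a stratified affine fibration so that $|E(\B{F}_{q^r})|=|F(\B{F}_{q^r})|\cdot|B(\B{F}_{q^r})|$) yields polynomial-count for the stratum.

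Finally I would reassemble: summing the stratum polynomials over all $\gamma$-decompositions gives a polynomial for $|\Frep^\gamma_{\alpha_1\cdots\alpha_s}(Q[E])|$, dividing by $|\GL_\alpha(\B{F}_{q^r})|$ (a fixed polynomial in $q^r$ that exactly divides each term, by the fibre-bundle normalization already built into the $r$-notation) and taking the alternating sum over $*$ yields that $\int_{\Delta(\gamma)}\chi_\alpha^\mu$ is a rational function of $q$ that is integer-valued on prime powers; by Lemma~\ref{L:Euler} it lies in $\B{Z}[t]$ after multiplying by $q-1$, so $|\Mod^{\hat\mu}_{(\gamma,\alpha)}(Q[E]\otimes kA_2)|$ — hence $\sum_{M\in\Mod^\mu_\alpha(Q[E])}|\Gr_\gamma(M)|$ — is polynomial-count. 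The main obstacle I anticipate is bookkeeping, not concept: one must check that the $s$-step analog of Lemma~\ref{L:Frep2} genuinely produces a Zariski-locally-trivial (or iterated affine) fibration so that point counts multiply cleanly, and that the incidence-variety bases $\Fl^{\gamma_s\cdots\gamma_1}_{\alpha_s\cdots\alpha_1}$ decompose into polynomial-count pieces — these are exactly the $\In$ and $\Gr^{b\cap e}$ computations of Lemma~\ref{L:A2Q} pushed to $s$ steps, which the excerpt already asserts can be done. A secondary subtlety is ensuring the geometric-quotient hypothesis is used only through \cite[Lemma 1.2]{Fc1} and nowhere else, so that the combinatorial identity for $\int_{\Delta(\gamma)}\chi_\alpha^\mu$ itself is unconditional (which it is, coming purely from the Hall-algebra identity \eqref{eq:HallID}).
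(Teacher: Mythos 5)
Your proposal is correct and follows exactly the route the paper takes (which it leaves largely implicit after Lemma \ref{L:Frep2}): apply $\int_{\Delta(\gamma)}$ to the HN identity \eqref{eq:HallID}, stratify $\Frep^\gamma_{\alpha_1\cdots\alpha_s}(Q[E])$ by the pieces $\Frep^{\gamma_s\cdots\gamma_1}_{\alpha_s\cdots\alpha_1}$, use the $s$-step fibre-bundle structure of Lemma \ref{L:Frep2} to reduce to incidence varieties, affine factors, and the $\Rep_{(\ast,\ast)}(Q[E])$'s handled by Lemma \ref{L:rep} together with the add-polynomial-count hypothesis, and convert back via \cite[Lemma 1.2]{Fc1}. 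The only cosmetic remark is that Lemma \ref{L:Frep2} is stated for the coextension $Q^\circ[E]$, so one should invoke the dual formula for $Q[E]$ as the paper notes; this does not affect the argument.
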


\begin{proof} We apply $\int_{\Delta(\gamma)}$ to \eqref{eq:HallID}. Due to Lemma \ref{L:rep}, \ref{L:Frep2} and its $s$-step generalization, the right-hand side $\sum_* (-1)^{s-1} \int_{\Delta(\gamma)} \chi_{\alpha_1}\cdot\cdots\chi_{\alpha_s}$ is a polynomial in $q$.
We see from \eqref{eq:intDelta} that the left-hand side $\int_{\Delta(\gamma)} \chi_\alpha^\mu$ is equal to
$(q-1)^{-1} \sum_{W\in\Mod_\alpha^\mu(Q[E])} |\Gr_\gamma(W)|$ if $\Mod_\alpha^\mu(Q[E])$ is a geometric quotient.
As we mentioned before, according to \cite[Lemma 1.6]{Fc1} we have that
$$\sum_{W\in\Mod_\alpha^\mu(Q[E])} |\Gr_\gamma(W)| = |\Mod_{(\gamma,\alpha)}^{\hat{\mu}}(kA_2\otimes kQ[E])|$$
for some slope function $\hat{\mu}$.
Finally, our result follows from Lemma \ref{L:Euler}.
\end{proof}

As in \cite[Section 6]{Fc1}, we can also consider the $t$-step analog of $\int_{\Delta(r)}$:
$$\int_{\Delta^t(\gamma_t\cdots\gamma_1)}[W]={a_W}^{-1}|\Fl_{\gamma_t\cdots\gamma_1}(W)|.$$
Everything can be generalized to this case without any essential difficulty.

\section{$S$-analog} \label{S:S}
Finally we consider the map $\Xint S$ from the Hall algebra $H(A)$ to the formal power series algebra $\mb{Q}(q)[[\bold x]]$ in $|Q_0|$ variables as in \cite[Section 8]{Fc1}:
$$\Xint S[W]=a_W^{-1}\sum_{i=0} (-1)^{i+1} F_i(W)x^\alpha,$$
where $F_i(W)$ is the number of $i$-step filtrations of $W$. We recall from \cite[Lemma 8.3]{Fc1} that the number $\sum_{i=0} (-1)^{i+1} F_i(W)$ has a neat formula in terms of the multiplicities of simple summands of $W$.

Fix a slope function $\mu$ and a slope $\mu_0\in \mb{Q}$. Let $\module_{\mu_0}(A)$ be the abelian subcategory of all semistable representations with slope $\mu_0$, and
$\chi_{\mu_0}=\sum_{M\in\module_{\mu_0}(A)} [M]$.
Recall that a stable representation $M$ is called {\em absolutely stable} if $M\otimes_k K$ is stable for every finite field extension $k\subset K$.
Let us denote by $a_\alpha$ the number of $\alpha$-dimensional absolutely stable representations and $m_\alpha=|\Mod_\alpha^\mu(A)|$.
\begin{definition}
The absolute (resp. relative) Poincar\'{e} series of $\Rep(A)$ at $\mu_0$ is $A_{\mu_0}(A)=\sum_{\mu(\alpha)=\mu_0} a_\alpha(q)x^\alpha$ (resp. $M_{\mu_0}(A)=\sum_{\mu(\alpha)=\mu_0} m_\alpha(q)x^\alpha$).
Our convention is that the relative ones have constant term $1$, but $0$ for the absolute ones.
\end{definition}

It was proved in \cite[Theorem 4.1]{MR} that
$$\Xint S \chi_{\mu_0} = \Exp\Big(\frac{A_{\mu_0}(Q)}{1-q}\Big),$$
where $\Exp$ is the plethystic exponential in the $\lambda$-ring $\mb{Q}(q)[[\bold x]]$ \cite[Section 2]{MR}.
Moreover, it is known \cite[Theorem 8.3]{R3} that
$$M_{\mu_0}(Q)=\Exp\big(A_{\mu_0}(Q)\big).$$
Actually, for both the argument would work for any algebra not necessarily hereditary.

To compute $\Xint S \chi_{\mu_0}$, we apply $\Xint S$ to each individual $\chi_\alpha$ with $\mu(\alpha)=\mu_0$ using \eqref{eq:HallID}.
It follows from Lemma \ref{L:Frep2} and its $t$-step generalization that for the algebras $kQ[E]$ and $kA_2(Q)$ the series $A_{\mu_0}$ has all coefficients polynomials in $q$, and so are $M_{\mu_0}$.
It follows that

\begin{theorem} \label{T:final}
The assumption of being a geometric quotient in Theorem \ref{T:ext} and \ref{T:A2Q} can be dropped.
\end{theorem}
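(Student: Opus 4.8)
The plan is to reach $m_\alpha:=|\Mod_\alpha^\mu(A)|$ without ever inspecting the quotient map, by running the $S$-analog of counting on the Harder--Narasimhan identity \eqref{eq:HallID}. Fix a slope $\mu_0$ and work throughout in $\B{Q}(q)[[\bold x]]$. As recalled in Section~\ref{S:S}, the two structural identities needed hold for an arbitrary basic algebra $A=kQ/I$, not only for hereditary ones: the relation $M_{\mu_0}(A)=\Exp\big(A_{\mu_0}(A)\big)$ of \cite[Theorem~8.3]{R3} between the relative and absolute Poincar\'e series, and the formula $\Xint S\chi_{\mu_0}(A)=\Exp\big(A_{\mu_0}(A)/(1-q)\big)$ of \cite[Theorem~4.1]{MR}. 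Eliminating $A_{\mu_0}(A)$ gives $M_{\mu_0}(A)=\Exp\big((1-q)\,\Log\big(\Xint S\chi_{\mu_0}(A)\big)\big)$, so the whole theorem reduces to two assertions: (i) for $A=Q[E]$ with $E$ add-polynomial-count and for $A=A_2(Q)$, the series $\Xint S\chi_{\mu_0}(A)$ has all coefficients polynomial-count; and (ii) $\Exp$, $\Log$, and multiplication by $1-q\in\B{Z}[q]$ preserve the property of having polynomial-count coefficients.

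For (i), I would expand each $\chi_\alpha^\mu$ with $\mu(\alpha)=\mu_0$ via \eqref{eq:HallID} and apply $\Xint S$ to each product $\chi_{\alpha_1}\cdots\chi_{\alpha_s}$ directly --- $\Xint S$ is not multiplicative in the non-hereditary case, so one cannot split the product. Since $\Xint S$ records, with signs, every iterated filtration of $W$ while $\chi_{\alpha_1}\cdots\chi_{\alpha_s}$ already records the flags of type $(\alpha_1,\dots,\alpha_s)$, the coefficient of $x^\alpha$ in $\Xint S(\chi_{\alpha_1}\cdots\chi_{\alpha_s})$ is, up to the overall factor $x^\alpha/|\GL_\alpha|$ and signs, an alternating sum over refinement data of the point counts of the $t$-step $\Delta$-analogs of $\Frep_{\alpha_s\cdots\alpha_1}(A)$ introduced in Section~\ref{S:delta}. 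For $A=Q[E]$ these counts are polynomials in $q$ by the $t$-step generalization of Lemma~\ref{L:Frep2} together with Lemmas~\ref{L:Frep} and \ref{L:rep}, which push the computation down to the Grassmannians $\Gr_\gamma(nE)$, polynomial-count by the hypothesis on $E$. For $A=A_2(Q)$ the same holds by the $\Delta$- and $t$-step analogs of Lemma~\ref{L:A2Q}, proved by the identical fibre-bundle argument: the fibres are products of affine spaces and general linear groups, the bases are the incidence varieties appearing in that lemma, all manifestly polynomial-count. Only finitely many decompositions and refinements contribute to a given $x^\alpha$, so (i) follows.

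For (ii), the key point is that the map sending a polynomial-count variety $X$ to the function $q^r\mapsto|X(\B{F}_{q^r})|$ is a $\lambda$-ring homomorphism under which the Adams operation $\psi_n$ corresponds to the Frobenius base change from $\B{F}_q$ to $\B{F}_{q^n}$, i.e. to $q\mapsto q^n$ and $x^\gamma\mapsto x^{n\gamma}$; since $\Exp$ and $\Log$ are built solely from the $\psi_n$, they commute with this specialization, and multiplication by $1-q$ trivially preserves polynomial-count. Granting (i) and (ii), $A_{\mu_0}(A)=(1-q)\,\Log\big(\Xint S\chi_{\mu_0}(A)\big)$ has polynomial-count coefficients, hence so does $M_{\mu_0}(A)=\Exp\big(A_{\mu_0}(A)\big)$; that is, every $|\Mod_\alpha^\mu(A)|$ is polynomial-count, with no hypothesis on the GIT quotient being geometric, and --- since each operation above is effective --- the counting polynomials remain explicitly computable, as in Theorems~\ref{T:ext} and \ref{T:A2Q}. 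I expect the one genuinely laborious step to be making the $t$-step $\Delta$-analogs of the $\Frep$ varieties precise for both $Q[E]$ and $A_2(Q)$ and checking that the fibre-bundle descriptions of Lemmas~\ref{L:Frep2} and \ref{L:A2Q} survive the extra filtration data; once that bookkeeping is done, Theorem~\ref{T:final} is a formal consequence of these two plethystic identities and the F-polynomial-count property already established in Sections~\ref{S:Frep}--\ref{S:A2}.
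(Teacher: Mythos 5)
Your proposal follows essentially the same route as the paper: apply the $S$-analog $\Xint S$ to the Harder--Narasimhan identity \eqref{eq:HallID}, use the two plethystic identities $\Xint S\chi_{\mu_0}=\Exp\bigl(A_{\mu_0}/(1-q)\bigr)$ and $M_{\mu_0}=\Exp\bigl(A_{\mu_0}\bigr)$ (valid for non-hereditary algebras), and reduce the point counts of the resulting double-filtration varieties to the $t$-step $\Delta$-analogs of the $\Frep$ varieties via Lemma \ref{L:Frep2} and its generalizations. Your additional remarks --- that $\Xint S$ is not multiplicative so the product must be handled as a whole, and that $\Exp$, $\Log$ and multiplication by $1-q$ preserve polynomiality --- are correct elaborations of steps the paper leaves implicit.
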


\section*{Acknowledgement} The author want to thank the anonymous referee for carefully reading the paper, and Professor Ringel for gently handeling this paper. He also thank Dr. Chari and Dr. Gierz for their kindness, for the wonderful environment of the department.

\bibliographystyle{amsplain}

\end{document}